\documentclass{amsart}
\makeatletter

\usepackage{amsmath}		
\usepackage{amssymb}		
\usepackage{amsrefs}		    
\usepackage{enumerate}	    
\usepackage[mathcal]{eucal} 
\usepackage{tikz}           
\usepackage{tikz-3dplot}    

\usetikzlibrary{calc,decorations.pathreplacing}

\providecommand{\pc}{\providecommand}
\pc{\nc}{\newcommand}
\pc{\rc}{\renewcommand}

\nc{\bbN}{\mathbb{N}}\nc{\bbZ}{\mathbb{Z}}

\nc{\calC}{\mathcal{C}}\nc{\calD}{\mathcal{D}}
\nc{\calE}{\mathcal{E}}\nc{\calN}{\mathcal{N}}
\nc{\calP}{\mathcal{P}}\nc{\calQ}{\mathcal{Q}}

\rc{\phi}{\varphi}
\pc{\ep} {\varepsilon}

\rc{\subset} {\subseteq}
\rc{\supset} {\supseteq}
\rc{\bar}    {\overline}
\rc{\t}		   {\text}
\pc{\inj}    {\hookrightarrow}
\pc{\surj}   {\twoheadrightarrow}
\pc{\iso}    {\xrightarrow{\sim}}
\nc{\res}    {\upharpoonright}
\nc{\xkappa} {X(\vec\kappa)}
\nc{\xlambda}{X(\vec\lambda)}

\pc{\dmo}{\DeclareMathOperator}
\dmo{\supp}{supp}
\dmo{\cf}  {cf}
\dmo{\Res} {Res}
\dmo{\Mod} {mod}
\dmo{\vmin}{vmin}

\nc{\declaretheorems}[4]{%
  \newtheorem{#1}[main]{#3}
  \newtheorem{#2}[main]{#4}
  \newtheorem*{#1*}{#3}%
  \newtheorem*{#2*}{#4}%
}

\nc{\declaredefinitions}[4]{%
  \theoremstyle{definition}
  \newtheorem{#1}[main]{#3}
  \newtheorem{#2}[main]{#4}
  \newtheorem*{#1*}{#3}%
  \newtheorem*{#2*}{#4}%
  \theoremstyle{plain}
}

\declaredefinitions{definition}{definitions}{Definition}{Definitions}
\declaredefinitions{remark}{remarks}{Remark}{Remarks}
\declaredefinitions{fact}{facts}{Fact}{Facts}
\declaredefinitions{example}{examples}{Example}{Examples}
\declaretheorems{theorem}{theorems}{Theorem}{Theorems}
\declaretheorems{lemma}{lemmas}{Lemma}{Lemmas}
\declaretheorems{proposition}{propositions}{Proposition}{Propositions}
\declaretheorems{corollary}{corollaries}{Corollary}{Corollaries}
\declaretheorems{question}{questions}{Question}{Questions}

\newtheorem{maintheorem}{Theorem}
\rc{\themaintheorem}{\Alph{maintheorem}}

\newcommand\blfootnote[1]{
    \begingroup
    \renewcommand\thefootnote{}\footnote{#1}
    \addtocounter{footnote}{-1}
    \endgroup
}

\renewenvironment{proof}[1][\proofname]{\par
  \normalfont \topsep6\p@\@plus6\p@\relax
  \trivlist
  \item[\hskip\labelsep\itshape #1\@addpunct{.}]\ignorespaces
}{%
  \unskip\nobreak\hspace{.5em}\qedsymbol\endtrivlist\@endpefalse
}

\makeatother

\title[Computing cohomology of locally profinite sets]{Computational techniques for sheaf cohomology of locally profinite sets}
\author{Mark Schachner}
\date{13 April 2026}

\begin{document}

\begin{abstract}
  We compute the sheaf cohomology with constant $\bbZ_2$ coefficients of a concrete class of locally profinite sets of independent interest. We introduce $k$-sheer partitions to aid in constructions. It is also shown that questions of intermediate cohomology degrees can be reduced to questions about top cohomology degrees by exhibiting nontrivial top cocycles as pointwise limits of coboundaries.
\end{abstract}

\maketitle

\section{Introduction}
\blfootnote{Supported by NSF grant DGE-2139899.}

\setlength{\parindent}{12pt}

In this paper we describe a toolbox for computing sheaf cohomology groups of certain ``infinitary cuboid" spaces. These are \textit{locally profinite sets}, in that they arise from the deletion of a point from a profinite set. Profinite sets\footnote{More concisely, \textit{Stone spaces}; more verbosely, \textit{totally disconnected compact Hausdorff spaces}.}, in turn, are important for their role in Stone duality, where they form one end of a logical-topological correspondence with Boolean rings.

Recent work has demonstrated that analysis of the cohomology of locally profinite sets, ported across Stone duality, yields novel information about the corresponding rings. Specifically, the construction of a locally profinite set of size $\aleph_n$ and cohomological dimension $n$, due to Aoki \cite{aoki}, yields a nondescendable faithfully flat map of Boolean rings of size $\aleph_\omega$. This, in turn, has implications for the theory of light condensed sets, due to Clausen and Scholze \cites{cond-yt,cond-notes}. Aoki's constructions are infinitary-combinatorial in nature; the essential goal of this paper is to further those arguments by providing novel tools and techniques for computing the cohomology of locally profinite sets. Nevertheless, our methods remain largely elementary, and no forcing machinery is involved.

The paper contains three main parts, corresponding to the three main tools developed. After expositing the spaces under study in section \ref{sec:overview}, in section \ref{sec:main1} we analyze their sheaf cohomology directly by exhibiting nontrivial cocycles via a nonconstructive argument. Here our arguments are within ZFC, but our conclusions are strengthened by cardinal arithmetic bounds. In section \ref{sec:main2}, we define $k$-\textit{sheer} partitions of the spaces, which aid in constructions of cocycles and trivializers. We use these partitions to show both vanishing and nonvanishing of certain cohomology groups; we also prove a converse to one of Aoki's main results. Lastly, in section \ref{sec:main3} we characterize nonvanishing cohomology purely in terms of nonvanishing top cohomology. A better understanding of these top cohomology groups is thus sufficient to resolve all remaining cases. All our arguments may be construed as taking place in ZF, with choice only needed in a few places, such as in the existence of an injection $\aleph_1 \inj 2^{\aleph_0}$. 

I am grateful to Justin Tatch Moore and Jeffrey Bergfalk for invaluable advising, as well as to the NSF Graduate Research Fellowship for funding the period of my graduate study during which the majority of this research was undertaken.

\setlength{\parindent}{0pt}

\section{Definitions and overview of results}\label{sec:overview}

Much of our notation is standard:
\begin{itemize}
  \item Tuples of cardinals, ordinals, etc. are denoted by $\vec\kappa, \vec\lambda, \vec\beta, \vec x$, etc. (We reserve $\alpha$ to refer to the one-point compactification $\alpha(X)$ of a space $X$.)
  \item We frequently denote by $n+1$ the set $\{0,\ldots,n\}$, and given $j \in n+1$ and a tuple $\vec x$ we write $\vec x^j$ for the tuple of length $n$ obtained by deleting the $j$-th coordinate from $\vec x$. We also extend this notation as follows: for $A \subset n+1$, the expression $\vec x^A$ refers to the tuple obtained by deleting all elements whose index lies in $A$. By analogy, $\vec x\res_A$ refers to the tuple of elements of $\vec x$ whose indices lie in $A$. 
  \item The expressions $[A]^k, [A]^{\geq k}$ refer to the sets of subsets of $A$ of size $k$ and of size at least $k$, respectively. 
  \item For tuples $\vec \lambda, \vec \kappa$, the relation $\vec\lambda \leq \vec\kappa$ will always refer to the coordinate-wise partial order.
\end{itemize}  
\begin{definition}\label{def:xkappa}
    Let $\vec\kappa$ be a weakly increasing tuple of cardinals. The space $\xkappa$ is defined 
    $$
      \xkappa := \prod_{i \leq n} \alpha(\kappa_i),
    $$
    where as above $\alpha(\kappa_i) = \kappa_i \amalg \{\infty\}$ denotes the one-point compactification of the discrete space of size $\kappa_i$. We denote by $\xkappa^-$ the subspace obtained by deleting the point $(\infty,\ldots,\infty)$.
\end{definition}

Study of the $X(\vec \kappa)^-$ prior to Aoki largely took place in the 1960s; the space $X(\aleph_0,\mathfrak{c})^-$ is an important counterexample in point-set topology, where it's known as \textit{Thomas' plank} \cite{thomasplank}. Wiegand \cites{wiegand1,wiegand2} also considered the spaces $X(\aleph_0,\aleph_1,\ldots,\aleph_n)^-$, which Aoki showed have nonzero cohomology in the $n$-th degree.

\begin{remark}\label{rem:xkappa-omega}
  Our notation differs slightly from Aoki's; for instance, where he writes $X(0,1,2)$ we write $X(\aleph_0,\aleph_1,\aleph_2)$. We will still typically consider the case where $\sup\vec\kappa < \aleph_\omega$.
\end{remark}

A Leray cover for $\xkappa^-$, i.e., one in which all finite intersections are acyclic, is given by the sets $\{\vec x : x_i \neq \infty\}$ for each $i \leq n$. Aoki used this cover to characterize $H^\bullet(\xkappa^-,\underline{\bbZ_2})$ via the associated \v{C}ech complex. We introduce it via the following notation:

\pagebreak
\begin{definition}\label{def:combchar-notation}
    Let $A \subset n+1$, and define 
    \begin{align*}
      \calD_A(\vec{\kappa}) &= \{\langle x_i\rangle \in \xkappa : \forall i \in A, x_i \neq \infty\}, \\
      \calC_A(\vec\kappa) &= \{f : \calD_A(\vec\kappa) \to \bbZ_2 \t{ continuous}\},
    \end{align*}
    where
    \begin{itemize}
      \item $\calD_A{(\vec\kappa)}$ is given the subspace topology;
      \item $\bbZ_2$ is given the discrete topology; and
      \item $\calC_A{(\vec\kappa)}$ inherits the ring structure from $\bbZ_2$,
      and is given the compact-open topology.
    \end{itemize}
    We omit the tuple $\vec\kappa$ from the notation when clear, writing $\calD_A,\calC_A$.
  \end{definition}

\begin{proposition}[\cite{aoki}]\label{prop:combchar}
The sheaf cohomology of $\xkappa^-$ with constant $\bbZ_2$ coefficients is computed
by the \v{C}ech complex whose $k$-th term is 
$$
    \check{C}^k(\xkappa^-,\underline{\bbZ_2}) = \prod_{A \in [n+1]^{k+1}} \calC_A(\vec\kappa).
$$
\end{proposition}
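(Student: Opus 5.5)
The plan is to apply Leray's theorem to the open cover $\mathcal{U} = \{U_i\}_{i \le n}$ of $\xkappa^-$, where $U_i = \calD_{\{i\}} = \{\vec x : x_i \neq \infty\}$. Three things must be checked: that $\mathcal{U}$ covers $\xkappa^-$; that for each nonempty $A \subset n+1$ the intersection $\bigcap_{i\in A} U_i = \calD_A$ satisfies $H^q(\calD_A,\underline{\bbZ_2}) = 0$ for $q>0$; and that $\underline{\bbZ_2}(\calD_A) = \calC_A$. Leray's theorem then identifies sheaf cohomology with the cohomology of the alternating \v{C}ech complex $\prod_{A \in [n+1]^{k+1}} \underline{\bbZ_2}(\calD_A)$, which is the claimed complex.

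The covering statement is immediate: a point of $\xkappa^-$ is not $(\infty,\ldots,\infty)$, so some coordinate $x_i \neq \infty$ and the point lies in $U_i$. The identification of sections is also formal. A section of the constant sheaf $\underline{\bbZ_2}$ over an open set $V$ is a locally constant function $V \to \bbZ_2$, which is the same as a continuous map into discrete $\bbZ_2$. Hence $\underline{\bbZ_2}(\calD_A) = \calC_A$, and the restriction maps in the \v{C}ech complex are just restrictions of functions.

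The main step is acyclicity of each $\calD_A$. Write
$$\calD_A \cong \Big(\prod_{i\in A}\kappa_i\Big) \times \prod_{i \notin A} \alpha(\kappa_i),$$
which is a disjoint union, indexed by the discrete set $\prod_{i\in A}\kappa_i$, of open copies of the compact space $K = \prod_{i\notin A}\alpha(\kappa_i)$. Sheaf cohomology turns disjoint unions of open subspaces into products, so $H^q(\calD_A) = \prod H^q(K)$, and it suffices to show $H^q(K,\underline{\bbZ_2}) = 0$ for $q > 0$.

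To prove this I would use that $K$ is profinite: it is a finite product of one-point compactifications of discrete spaces, hence compact, Hausdorff and totally disconnected. Every open cover of $K$ can therefore be refined to a finite cover by pairwise disjoint clopen sets. The \v{C}ech complex of such a disjoint cover is acyclic in positive degrees, because sections over a disjoint union are products. Consequently \v{C}ech cohomology $\check H^q(K) = 0$ for $q>0$. Since $K$ is paracompact Hausdorff, \v{C}ech cohomology agrees with sheaf cohomology, giving the vanishing.

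Alternatively, I could use the fact that every sheaf on a profinite space is flasque-acyclic via clopen refinements, that is, that $H^q = 0$ for $q > 0$ on spaces of covering dimension zero. This step, together with the care needed to verify that the disjoint-union decomposition of $\calD_A$ is a topological one (each slice $\{\vec x\res_A\} \times K$ is open because the coordinates in $A$ are isolated points), is where the real content lies. Everything else is bookkeeping matching Leray's alternating complex to the indexing by $[n+1]^{k+1}$.
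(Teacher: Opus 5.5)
Your proposal is correct and follows the route the paper indicates. The paper gives no proof beyond noting that $\{\vec x : x_i \neq \infty\}_{i \le n}$ is a Leray cover and citing Aoki, and you supply the omitted details: each $\calD_A$ is a disjoint union, over a discrete index set, of copies of a profinite space, which is $\underline{\bbZ_2}$-acyclic, and sections of $\underline{\bbZ_2}$ are exactly continuous maps to discrete $\bbZ_2$. One wording fix is needed in your alternative remark: sheaves on a profinite space are \emph{soft} (not flasque), hence acyclic, but your main \v{C}ech-refinement argument does not depend on this.
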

More concretely:
  \begin{itemize}
    \item a $k$-cochain is (represented by) a tuple of functions $\vec f = \langle f_A : A \in [n+1]^{k+1}\rangle,$
    where each $f_A : \calD_A(\vec \kappa) \to \bbZ_2$ is continuous; i.e., $f_A$ 
    is an ordinary function whose support is a clopen subset of $\calD_A$.
    \item A tuple $\vec f$ represents a $k$-cocycle if, for any $A' \in [n+1]^{k+2}$, we have 
    $$
      (d\vec f)_{A'} := \sum_{i \in A'} f_{A' \setminus \{i\}} = 0.
    $$
    \item A $k$-coboundary arises from some tuple $\langle g_{A'} : A' \in [n+1]^k\rangle$ via 
    $$
      f_A = \sum_{i \in A} g_{A \setminus \{i\}}.
    $$
\end{itemize}

Aoki notes that Proposition \ref{prop:combchar} immediately implies that the 
cohomological dimension of $\xkappa^-$ is at most $n+1$; moreover, he uses this 
characterization to show that $H^n(\xkappa^-,\underline{\bbZ_2}) \neq 0$ whenever $\kappa_i \geq \aleph_i$ for all $i \leq n$.
Here, a top cocycle is just a function 
$f : \prod_{i \leq n} \kappa_i \to \bbZ_2$, and $f$ represents a 
coboundary if and only if it is the sum of maps
$f_i : \prod_{i \leq n} \kappa_i \to \bbZ_2, i \leq n$, such that
$f_i$ is mod-finite independent of the $i$-th coordinate.

Our first main tool, proved using this characterization, is the following.

\begin{maintheorem}\label{thm:main1}
  Let $0 < k < n \in \bbN$, and suppose 
  $\vec\kappa = \langle \kappa_0 \leq \kappa_1 \leq \cdots \leq \kappa_n \rangle$
  is a weakly increasing tuple of cardinals such that
  $$
    \kappa_{n-k} < \cf \kappa_n 
    \quad \t{ and } \quad 
    2^{\kappa_0} \geq \kappa_{n-k+1},
  $$
  then $H^k(\xkappa^-,\underline{\bbZ_2}) \neq 0$.
\end{maintheorem}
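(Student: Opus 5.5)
We reduce to a top-degree statement on a subproduct and then argue by counting. Write $m = n-k$, so $\kappa_m < \cf\kappa_n$ and $2^{\kappa_0}\geq\kappa_{m+1}$. The intended degree-$k$ class is supported on the last $k+1$ coordinates $B=\{m,m+1,\ldots,n\}$, with the first $m$ coordinates a dummy parameter.

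\textbf{Step 1 (the candidate cocycle).} Take a degree-$k$ cochain $\vec f$ with $f_A=0$ for $A\neq B$. Then $d\vec f=0$ exactly when $f_B$, which is continuous on $\calD_B$, is locally independent of the remaining coordinates. That is, for each $A'=B\cup\{j\}$ with $j<m$, we need $f_B$ to be mod-finite independent of $x_j$. The simplest choice is $f_B(\vec x)=g(\vec x\res_B)$ for a function $g:\prod_{i\in B}\kappa_i\to\bbZ_2$. This $f_B$ is continuous on $\calD_B$ because the coordinates in $B$ are discrete there, and it is constant in the other coordinates. Thus every such $g$ gives a $k$-cocycle.

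\textbf{Step 2 (what a trivializer means).} Suppose $\vec f=d\vec h$, where $\vec h=\langle h_C : C\in[n+1]^k\rangle$. Restrict to the slice where $x_i=\infty$ for all $i\notin B$. On this slice only the components $h_{B\setminus\{i\}}$ survive, since $\calD_C$ meets the slice only if $C\subset B$. The slice is $X(\kappa_m,\ldots,\kappa_n)^-$ with its top cocycle, so $g$ would be a top coboundary in $(k+1)$ coordinates: $g=\sum_{i\in B}g_i$ with $g_i$ mod-finite independent of $x_i$. For this restriction to be valid, we must check that the \v{C}ech complex of Proposition~\ref{prop:combchar} restricts compatibly to the closed face. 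If that fails, we argue directly with the continuous $h_C$ evaluated at $\infty$-coordinates.

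\textbf{Step 3 (the main obstacle).} We must produce a $g$ on $\kappa_m\times\cdots\times\kappa_n$ that is not a top coboundary, using $\kappa_m<\cf\kappa_n$ and $2^{\kappa_0}\geq\kappa_{m+1}$. Aoki's nonvanishing needs $\kappa_{m+j}\geq\aleph_j$, which we do not have, so we instead plan to use the dummy coordinates in a diagonal argument.

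Let $g$ depend also on $x_0\in\kappa_0$. Then $g$ is no longer constant in $x_0$, but it remains mod-finite independent of $x_0$ if we allow it to vary only on finitely many $x_0$ per fiber. This does not work directly, so the better route is nonconstructive counting in the style of Aoki. Inject $\kappa_{m+1},\ldots,\kappa_n$-indexed data into $2^{\kappa_0}$, which is possible since $2^{\kappa_0}\geq\kappa_{m+1}$. Each putative trivializer $\langle g_i\rangle$ is then coded by a cofinal structure in the $x_n$ direction. Because $\kappa_m<\cf\kappa_n$, any function mod-finite independent of $x_n$ is determined, on a tail of $x_n$, by fewer than $\cf\kappa_n$ parameters, so it stabilizes on a bounded set.

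The diagonalization then proceeds in two parts. First, fix the coordinates in $\kappa_m$ and induct downward on the length of the product, using the induction hypothesis for a lower-dimensional tuple. Second, handle the base case $k+1=2$, which is of Thomas-plank type: an $\omega$-sequence versus a cofinal $\kappa_n$ construction, where $2^{\kappa_0}\geq\kappa_{m+1}$ is used to encode a column-wise family of almost-disjoint sets that no sum $g_{m}+g_{n}$ can realize.

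I expect the genuine difficulty to be making Step 3 rigorous. The crux is finding a $g$ that can be shown not to split as $\sum g_i$; the reduction in Steps 1 and 2 is routine.
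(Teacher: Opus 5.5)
\textbf{Step~1 is false, and this breaks the strategy.} If $f_A=0$ for every $A\neq B$, then for $j\notin B$ every summand of $(d\vec f)_{B\cup\{j\}}=f_B+\sum_{s\in B}f_{B\setminus\{s\}\cup\{j\}}$ except the first vanishes. So $d\vec f=0$ forces $f_B=0$ on the dense set $\calD_{B\cup\{j\}}\subset\calD_B$, i.e.\ $f_B\equiv 0$. ``Mod-finite independence'' is the criterion for being a top \emph{coboundary}, not the cocycle condition.

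Adding further components does not help. By Proposition~\ref{prop:ptwise-limit}, for \emph{every} $k$-cocycle and every $\vec x\neq(\infty,\ldots,\infty)$ in $X(\vec\kappa^B)$, the slice $f_B(\vec x,-)$ is a top coboundary on the face. So a $B$-component of the form $g(\vec x\res_B)$ forces $g$ to be a coboundary. Your Step~2 is correct (it is just $\Res^k_B$), but it then detects nothing. To go through a face you need the mechanism of Theorem~\ref{thm:main3}: $g$ must be the limit at $(\infty,\ldots,\infty)$ of a continuous, genuinely varying family of coboundaries, as in Proposition~\ref{prop:main3-app}. Step~3 supplies neither such a family nor even a candidate $g$; it is a list of intentions.

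\textbf{The paper's route, and a problem with the statement.} The paper does not use faces. It isolates the $k$-terminal sets $A=\{i\}\cup\{n-k+1,\ldots,n\}$. Proposition~\ref{prop:files-cofinal} shows that for a coboundary $d\vec g$ the slices $(d\vec g)_A(-,\vec y)$ are constant on a cofinal subset of $\calQ$; this is where $\kappa_{n-k}<\cf\kappa_n$ is used. It then uses $2^{\kappa_0}\ge\kappa_{n-k+1}$ and Lemma~\ref{lemma:intersectC} to build $f_A(\vec x)=\sigma(x_{n-k+1})(x_0,\ldots,x_{n-k})$ with $\sigma$ injective.

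This is sound for $k=1$, where $\calQ=\kappa_n$ is linearly ordered. There, with $\sigma$ valued in the diagonal functions $\Delta S$, every face restriction of that cocycle is $0$, so your invariant would not even see it.

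For $k\ge 2$ the argument breaks. The paper's cocycle equals $d\vec h$, where $h_{\{i,n-k+1,\ldots,n-1\}}(\vec x)=\sigma(x_{n-k+1})(x_0,\ldots,x_{n-k})$ for $i\le n-k$ and all other $h_C=0$. The flaw in Proposition~\ref{prop:files-cofinal} is that $f_{A\setminus\{j\}}$ is only eventually independent of $y_j$, not of all of $\vec y$.

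In fact the statement itself is false in this generality. For $n=3$, $k=2$, $\vec\kappa=\langle\aleph_0,\aleph_0,\aleph_0,\aleph_1\rangle$ the hypotheses hold. Yet the minimum partition from the backward direction of Proposition~\ref{prop:fub-iff-zero} is $1$-sheer, so Proposition~\ref{prop:trivialize} gives $H^2(\xkappa^-,\underline{\bbZ_2})=0$. In this example your face $X(\aleph_0,\aleph_0,\aleph_1)^-$ already has vanishing top cohomology (Proposition~\ref{prop:aoki-converse-fub}), so the $g$ sought in Step~3 does not exist.

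A correct proof therefore has to be restricted to $k=1$ (or use stronger hypotheses). For $k=1$, the paper's cofinal-constancy argument is the tool to use.
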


As a simple corollary, we obtain ZFC ``reference spaces" for nonvanishing $H^k$ among the $\xkappa^-$, namely those indexed by tuples of the form
$$
\vec\kappa = \langle\underbrace{\aleph_0,\aleph_0,\ldots,\aleph_0}_{n-k+1}, \underbrace{\aleph_1,\aleph_1,\ldots,\aleph_1}_{k}\rangle.
$$
\pagebreak\clearpage
In section \ref{sec:main2}, we describe techniques for eliminating the cardinal arithmetic assumptions of Theorem \ref{thm:main1} in certain situations. The main tool is a partition of the product $\prod_{i \leq n} \kappa_i$ into $n+1$ pieces, each of which is ``thin'' in one of the coordinate directions; we call such partitions \textit{sheer}.

We use this machinery to obtain both vanishing results and embeddings of cohomology groups for certain tuples:

\begin{maintheorem}\label{thm:main2}
  Let $\vec\kappa$ be a length-$(n+1)$ tuple of cardinals which admits a $k$-sheer partition. Then:
  \begin{enumerate}
    \item $H^{k+1}(\xkappa^-,\underline{\bbZ_2}) = 0,$ and
    \item For any $\vec\lambda \leq \vec\kappa$ coordinatewise, there is an embedding
    $$
     H^k(\xlambda^-,\underline{\bbZ_2}) \inj H^k(\xkappa^-,\underline{\bbZ_2})
    $$
  \end{enumerate}
\end{maintheorem}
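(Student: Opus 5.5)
The plan is to work entirely through the \v{C}ech description of Proposition~\ref{prop:combchar}, reading the definition of a $k$-sheer partition as follows. For every $B \in [n+1]^{k+2}$, the index set $\prod_{i\in B}\kappa_i$ is partitioned into pieces $P^B_i$, $i\in B$, such that each $P^B_i$ meets every line in direction $i$ in a finite set. These partitions are chosen compatibly, so that the resulting cut of $\calD_B$ is clopen. If the actual definition in the paper differs, I would adapt the steps below to it.

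For part (1), fix a $(k+1)$-cocycle $\vec f=\langle f_B : B\in[n+1]^{k+2}\rangle$; the goal is a trivializer $\vec g$. I would begin with the top-degree model. There a cocycle is any $f:\prod\kappa_i\to\bbZ_2$, and setting $f_i = f\cdot 1_{P_i}$ gives $f=\sum_i f_i$. Each $f_i$ is finitely supported on every $i$-line, hence mod-finite equal to $0$ along that line, and therefore mod-finite independent of the $i$-th coordinate. So $f$ is a coboundary. In degree $k+1<n$, I would run the same splitting on each $\calD_B$, defining candidate pieces $g_{B\setminus\{i\}}$ from $f_B\cdot 1_{P^B_i}$. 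The trivializer must satisfy
\[
  f_B=\sum_{i\in B} g_{B\setminus\{i\}},
\]
so each $g_A$ has to be assembled from contributions of all $B\supset A$ of size $k+2$. I would build the $g_A$ by induction on a fixed ordering of $[n+1]^{k+1}$. At each step, subtract from $\vec f$ the coboundary already constructed, and use the cocycle identity $(d\vec f)_{B'}=0$ on $(k+3)$-sets to see that what remains is again a cocycle of the same shape with fewer nonzero components. The finiteness of $P^B_i$ along $i$-lines is what should make each $g_A$ continuous on $\calD_A$: in the missing direction it extends by $0$ near $\infty$.

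For part (2), I would use the map $r:\xkappa\to\xlambda$ that is the identity on $\lambda_i$ and sends $\kappa_i\setminus\lambda_i$ to $\infty$. This map is continuous, since a basic neighbourhood of $\infty$ pulls back to a cofinite set, and the inclusion $\xlambda\inj\xkappa$ is a section of it. The problem is that $r$ does not carry $\xkappa^-$ into $\xlambda^-$, so pulling back cochains along $r$ is only defined on $U=\xkappa^-\setminus r^{-1}(\infty,\ldots,\infty)$. I would therefore compose
\[
  H^k(\xlambda^-)\xrightarrow{r^*}H^k(U),
\]
then extend classes from $U$ to $\xkappa^-$, then restrict back to $\xlambda^-$. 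Since restriction after pullback is the identity, the composite $H^k(\xlambda^-)\to H^k(\xkappa^-)$ is injective once the extension step is defined.

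To define that extension, I would use the long exact sequence of the pair $(\xkappa^-,U)$, or concretely the \v{C}ech complexes. The obstruction to extending a $k$-cocycle from $U$ lies in a degree-$(k+1)$ group supported on $r^{-1}(\infty,\ldots,\infty)\setminus\{(\infty,\ldots,\infty)\}$. Combinatorially, this means solving the trivialization problem of part (1) on the region of $\prod\kappa_i$ where some coordinate lies outside $\lambda_i$. That region inherits a $k$-sheer partition by restriction, so the argument of part (1) should kill the obstruction there.

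The main obstacle is the inductive assembly in part (1): getting the $g_A$ consistent across all $B\supset A$ while keeping them continuous. I would first try the naive formula
\[
  g_A=\sum_{B\supset A} f_B\cdot 1_{P^B_{B\setminus A}},
\]
where $P^B_{B\setminus A}$ is the piece of the partition of $\prod_{i\in B}\kappa_i$ indexed by the single element of $B\setminus A$. I would check $d\vec g=\vec f$ directly using the cocycle identity, and fall back to the ordered induction only if cross terms fail to cancel.
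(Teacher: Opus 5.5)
There is a genuine gap in part (1), and it comes from how you read the definition.

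\textbf{What the definition actually says.} In Definition~\ref{def:fub} a $k$-sheer partition is not a family of partitions of the discrete parts $\prod_{i\in B}\kappa_i$ of the $(k+2)$-faces. It is a single partition $\langle Y_i : i\le n\rangle$ of all of $\xkappa^-$, limit points included, indexed by all $n+1$ coordinates. For every $A$ with $|A|\ge k+1$, the sets $Y_i$ with $i\notin A$ must be clopen in $\calD_A$, and $\bigcup_{i\in A}Y_i$ must contain a neighbourhood of the facet $F_A$. For $k=n-1$ the two readings agree, and your top-degree argument is fine. The trouble is $k<n-1$.

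\textbf{Why your naive formula fails.} With face-by-face pieces, compute $(d\vec g)_B=\sum_{i\in B}g_{B\setminus\{i\}}$. The terms with $B\setminus\{i\}\cup\{j\}=B$ give $f_B\sum_{i\in B}1_{P^B_i}=f_B$. What is left over is
\[
  \sum_{j\notin B}\,\sum_{i\in B} f_{B\setminus\{i\}\cup\{j\}}\cdot 1_{P^{B\setminus\{i\}\cup\{j\}}_j},
\]
and this has no reason to vanish. Already for $n=2$, $k=0$ and the cocycle $f_{01}=0$, $f_{02}=f_{12}=1$, it equals $1_{P^{02}_2}+1_{P^{12}_2}$ on $\calD_{01}$. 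For infinite $\kappa_i$ this is never identically zero: the two indicators could agree only if both depended on $x_2$ alone, and the finiteness conditions rule that out.

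\textbf{Why the fallback cannot rescue it.} Your ordered induction is not specified, and the hypothesis it would start from is strictly weaker than the paper's. Take $\vec\kappa=\langle\aleph_0,\aleph_0,\aleph_1,\aleph_1\rangle$ and $k=1$. Every $3$-face admits a partition of your kind, by Proposition~\ref{prop:aoki-converse-fub}. Yet no $1$-sheer partition exists:
\begin{itemize}
  \item condition (2) at the countably many points of $F_{01}$ gives $\gamma<\omega_1$ with $(x_0,x_1,x_2,x_3)\in Y_0\cup Y_1$ whenever $x_2,x_3>\gamma$;
  \item condition (2) at $(\infty,\infty,x_2,x_3)\in F_{23}$ puts the same points into $Y_2\cup Y_3$ once $x_0,x_1$ are large.
\end{itemize}
Face-by-face data cannot see how the pieces sit near the limit facets, and that is exactly what the argument needs.

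\textbf{How the paper closes it.} With the actual definition, your naive formula is the paper's proof (Proposition~\ref{prop:trivialize}): $g_A=f_{A\cup\{i\}}$ on $\calD_A\cap Y_i$ for $i\notin A$, and $g_A=0$ on $\calD_A\cap Y_i$ for $i\in A$. The cross terms cancel because one and the same $Y_j$ is used on every face, and the pieces $Y_j$ with $j\notin B$ also meet $\calD_B$. On such a piece, $(d\vec g)_B=\sum_{i\in B}f_{B\setminus\{i\}\cup\{j\}}=f_B$ by the cocycle identity on $B\cup\{j\}$. On $Y_j$ with $j\in B$, only $g_{B\setminus\{j\}}=f_B$ survives. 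Continuity does not come from finiteness along lines. It comes from Lemma~\ref{lem:upgrade}, $\calD_A\cap Y_i=\calD_{A\cup\{i\}}\cap Y_i$ for $i\notin A$, which is where condition (2) at the facets is used.

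\textbf{Part (2).} Your framing via the retraction $r$ and the extension obstruction is a reasonable reformulation. Once part (1) is done as above, it reduces to the paper's argument:
\begin{itemize}
  \item the naive extension $\vec f^\infty$ is $r^*\vec f$ extended by zero;
  \item the modification of Definition~\ref{def:modification} is exactly $\vec f^\infty+\vec h$, where $\vec h$ is the trivializer from Proposition~\ref{prop:trivialize} applied to the $(k+1)$-cocycle $d\vec f^\infty$;
  \item $\vec h$ is given by a pointwise formula and $d\vec f^\infty$ vanishes on $\xlambda^-$, so the correction vanishes on $\xlambda^-$, and restricting to $\xlambda^-$ returns $\vec f$.
\end{itemize}
This locality is what your obstruction step needs, not a partition ``inherited'' by the region where some $x_i\notin\lambda_i$. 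As it stands, part (2) inherits the gap in part (1).
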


We determine exactly when $k$-sheer partitions exist, in the process deriving a converse to one of Aoki's results:
\begin{corollary}\label{thm:aoki-converse}
Let $\vec\kappa$ be a tuple of cardinals such that $\kappa_i < \aleph_i$ for some $i \leq n$. Then $H^n(\xkappa^-,\underline{\bbZ_2}) = 0$.
\end{corollary}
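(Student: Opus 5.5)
The plan is to show directly that every top cocycle is a coboundary. By the characterization following Proposition \ref{prop:combchar}, a top cocycle is an arbitrary function $f : \prod_{j \leq n} \kappa_j \to \bbZ_2$. It is a coboundary iff $f = \sum_{j \leq n} f_j$ with each $f_j$ mod-finite independent of the $j$-th coordinate. It therefore suffices to produce a partition $\prod_{j\leq n}\kappa_j = P_0 \amalg \cdots \amalg P_n$ in which every line parallel to the $j$-th axis meets $P_j$ in a finite set. We then set $f_j = f\cdot 1_{P_j}$. On each $j$-line, $f_j$ is supported on finitely many points, hence eventually $0$, hence mod-finite independent of coordinate $j$, and $f = \sum_j f_j$. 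This is a generalization of Kuratowski's free-set theorem, and is presumably the "$(n-1)$-sheer" case of Theorem \ref{thm:main2}. I will argue it directly.

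\emph{Reduction to an initial block of coordinates.} Fix $i$ with $\kappa_i < \aleph_i$. If $i = 0$, then $\kappa_0$ is finite and $P_0 = \prod_j \kappa_j$ works. Otherwise $\kappa_i \leq \aleph_{i-1}$, and since $\vec\kappa$ is weakly increasing, $\kappa_0, \ldots, \kappa_i \leq \aleph_{i-1}$. Suppose we have a partition $Q_0, \ldots, Q_i$ of $\prod_{j \leq i}\kappa_j$ with finite $j$-lines in $Q_j$. Then $P_j = Q_j \times \prod_{j' > i}\kappa_{j'}$ for $j \leq i$, together with $P_j = \emptyset$ for $j > i$, does the job: a $j$-line in $P_j$ projects injectively onto a $j$-line in $Q_j$. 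So we need only the following lemma. \emph{If $\kappa_0, \ldots, \kappa_m \leq \aleph_{m-1}$ (with $\aleph_{-1}$ read as ``finite''), then $\prod_{j \leq m}\kappa_j$ admits such a partition.} Replacing each $\kappa_j$ by the larger set $\omega_{m-1}$ only makes this harder, so it suffices to treat the cube $(\omega_{m-1})^{m+1}$, and more generally $\theta^{m+1}$ for any ordinal $\theta$ with $|\theta| \leq \aleph_{m-1}$.

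\emph{The lemma, by induction on $m$.} The base case $m = 0$ is trivial. For the inductive step, take $x \in \theta^{m+1}$, let $\delta = \max_j x_j$, and let $j^*$ be the least index attaining it. The remaining $m$ coordinates lie in $\delta+1$, and $|\delta+1| \leq \aleph_{m-2}$. The induction hypothesis therefore yields a fixed partition of $(\delta+1)^{\{0,\ldots,m\}\setminus\{j^*\}}$ into pieces indexed by directions $l \neq j^*$. We place $x$ in $P_l$ according to the piece of this partition containing $x^{j^*}$.

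To verify finiteness, fix a line in direction $l$, so all coordinates except $x_l$ are fixed, and let $\delta' = \max_{j\neq l} x_j$. Consider first the points with $x_l > \delta'$. There the maximum is attained uniquely at $l$, so $j^* = l$ and the point is not placed in $P_l$. There is at most one point with $x_l = \delta'$. For the points with $x_l < \delta'$, both $\delta = \delta'$ and $j^*$ are determined by the fixed coordinates. These points then lie on a single $l$-line of the fixed inductive partition of $(\delta'+1)^m$, which meets that partition's $l$-piece in finitely many points.

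The main obstacle is precisely this bookkeeping of ties and of the choice of $j^*$. One has to check that the inductive partitions, chosen separately for each $\delta$ (which uses choice of bijections $\delta+1 \cong |\delta+1|$), still give finite lines once they are glued together. The case analysis above is what I expect to handle it.
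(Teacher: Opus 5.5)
Your proposal is correct and is essentially the paper's argument. The paper also builds a partition of $\prod_{i\le n}\kappa_i$ that is finite in each coordinate direction (its $(n-1)$-sheer partition) by deleting the maximal coordinate and re-indexing the rest through fixed injections $e_\beta:\beta\inj|\beta|$ (the ``virtual minimum''). It reduces to an initial block by declaring the remaining pieces empty, and trivializes via $f\cdot 1_{Y_i}$ (Proposition~\ref{prop:trivialize} with $k=n-1$). Your least-index tie-breaking and use of $\delta+1$ in fact handle ties among maximal coordinates, which the paper's $e_{\max\vec x}$ glosses over.

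One slip needs fixing. The bound $|\delta+1|\le\aleph_{m-2}$ holds only for $\theta\le\omega_{m-1}$; it fails for, say, $\theta=\omega_{m-1}+1$. So run the inductive step on $\omega_{m-1}^{m+1}$, and get arbitrary $\theta$ with $|\theta|\le\aleph_{m-1}$ by pulling back along an injection $\theta\inj\omega_{m-1}$, which preserves finiteness of lines.
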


Lastly, in the final section we prove the following sufficient condition for nonvanishing cohomology.

\begin{maintheorem}\label{thm:main3}
  Suppose there exists $B \in [n+1]^{k+1}$ and a continuous map \linebreak $\phi : X(\vec\kappa^B) \to H^k(X(\vec\kappa\res B)^-,\underline{\bbZ_2})$ such that
  \begin{enumerate}
    \item $\phi\res \xkappa^-$ is identically zero, and
    \item $\phi(\infty,\ldots,\infty) \neq 0$.
  \end{enumerate}
  Then $H^k(\xkappa^-,\underline{\bbZ_2}) \neq 0$.
\end{maintheorem}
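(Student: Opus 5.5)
The plan is to build an explicit Čech $k$-cocycle on $\xkappa^-$, via Proposition \ref{prop:combchar}, whose restriction to the closed slice
$$
Z := X(\vec\kappa\res B)^- \times \{(\infty,\ldots,\infty)\} \subset \xkappa^-
$$
represents $\phi(\infty,\ldots,\infty)$. Write $Y = X(\vec\kappa^B)$ and $\infty_Y$ for its point at infinity. I read hypothesis (1) as saying $\phi$ vanishes on $Y^- = Y\setminus\{\infty_Y\}$. I read continuity of $\phi$, with $H^k$ carrying the quotient of the compact-open (pointwise) topology on top cocycles, as saying there is a family of top cocycles $f_y : \prod_{i\in B}\kappa_i \to \bbZ_2$ with the following properties:
\begin{itemize}
\item $[f_y] = \phi(y)$ for every $y \in Y$;
\item the map $(y,z) \mapsto f_y(z)$ is continuous on $Y \times \prod_{i\in B}\kappa_i$;
\item $f_y$ is a coboundary for $y \neq \infty_Y$, and $f_{\infty_Y}$ is not.
\end{itemize}
This is the ``nontrivial top cocycle as a pointwise limit of coboundaries'' picture.

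\textbf{Step 1 (continuous trivializers).} I would choose, continuously in $y \in Y^-$, a $(k-1)$-cochain $g_y = \langle (g_y)_C : C \in [B]^k \rangle$ on $X(\vec\kappa\res B)^-$ with $d g_y = f_y$. To do this, split the locally profinite set $Y^-$ into a disjoint union of compact clopen sets $K$. On each $K$, the family $\langle f_y : y\in K\rangle$ is a single top cocycle on $K \times X(\vec\kappa\res B)^-$. This space is again of the combinatorial form covered by Proposition \ref{prop:combchar}, with the profinite factor $K$ carried along as a parameter. I would show that a top cocycle which is fibrewise a coboundary is globally a coboundary there. Concretely, each $f_y$ is a sum of functions $f_{y,i}$ that are mod-finite independent of coordinate $i$, and by compactness of $K$ and local constancy in $y$, finitely many such decompositions can be patched over a finite clopen partition of $K$. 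I expect this step to be the main obstacle, because it is exactly where the continuity of $\phi$ (as opposed to mere pointwise data) must be used. If the direct patching fails, my fallback is a Künneth-type identification $H^k(K\times W) \cong C(K, H^k(W))$ for profinite $K$.

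\textbf{Step 2 (the cocycle).} Define a $k$-cochain $\vec F$ on $\xkappa^-$ as follows. For $x \in \calD_A$, write $y = x^B$, $z = x\res B$, and $A = C \cup (A\setminus B)$ with $C = A\cap B$. Set
\begin{itemize}
\item $F_B(x) = f_{y}(z)$;
\item $F_A(x) = (g_y)_C(z)$ when $|A\setminus B| = 1$;
\item $F_A = 0$ when $|A\setminus B| \ge 2$.
\end{itemize}
When $|A\setminus B|=1$, the unique $j\in A\setminus B$ has $x_j \neq \infty$ on $\calD_A$, so $y \neq \infty_Y$ and $g_y$ is defined. Continuity of each $F_A$ comes from Step 1. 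The cocycle identity on $A' \in [n+1]^{k+2}$ is checked by cases on $m = |A'\setminus B|$:
\begin{itemize}
\item $m=1$, say $A' = B\cup\{j\}$: the identity reads $f_y = d g_y$, which holds since $y \neq \infty_Y$ there.
\item $m=2$, say $A' = C\cup\{j,l\}$ with $|C| = k$: the two nonzero terms are both $(g_y)_C(z)$, since $g_y$ depends only on $y$ and not on the choice of $j$ or $l$. They cancel mod $2$.
\item $m\ge 3$: every term vanishes.
\end{itemize}

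\textbf{Step 3 (nontriviality).} Restriction of Čech cochains along the closed inclusion $Z \inj \xkappa^-$ is a chain map for the restricted Leray cover. On $Z$, every set $\calD_A$ with $A \not\subset B$ is empty, so $\vec F$ restricts to the single component $f_{\infty_Y}$. Hence $[\vec F]$ maps to $\phi(\infty,\ldots,\infty) \neq 0$ in $H^k(X(\vec\kappa\res B)^-,\underline{\bbZ_2})$, and therefore $[\vec F] \neq 0$.
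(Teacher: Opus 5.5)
Your Steps 2 and 3 are the paper's own argument. $\vec F$ is the cochain of Proposition~\ref{prop:ptwise-limit-converse}, with the same $t$-independent trivializer and the same case split on $|A'\setminus B|$, and nontriviality comes from the face restriction $\Res^k_B$ of Definition~\ref{def:restriction}. The gap is Step 1. The paper does not address it either: it picks a trivializer of $\phi(\vec x^B)$ separately for each $\vec x^B$ and checks continuity only in $\vec x\res_B$. As you state it, Step 1 is false.

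Continuity of $y\mapsto f_y$ in the pointwise topology gives local constancy in $y$ only one point $z$ at a time. It does not give a neighbourhood on which $f_y=f_{y_0}$, so there is nothing to patch. The claim ``fibrewise a coboundary implies a coboundary on $K\times W$'' already fails for $K=\alpha(\aleph_0)$, $W=X(\aleph_0,\aleph_1)^-$, $k=1$, and so does your K\"unneth fallback.
\begin{itemize}
\item Fix $S\subset\omega_1$ with $S$ and $\omega_1\setminus S$ both uncountable. For $m<\omega$ put $f_m(n,\beta)=1$ iff $n\ge m$ and $\beta\in S$, and put $f_\infty=0$. Each $f_m$ is mod-finite constant in $n$, hence a coboundary, and $m\mapsto f_m$ is continuous.
\item Suppose $f_m=g^m_0+g^m_1$, with $g^m_0(n,\beta)$ continuous in $(m,n)\in\alpha(\aleph_0)^2$ and each $g^m_1(n,-)$ finite-or-cofinite. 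Continuity at $(\infty,\infty,\beta)$ gives $M(\beta)$ and $a(\beta)$ with $g^m_0(n,\beta)=a(\beta)$ for all $m,n\ge M(\beta)$.
\item Pass to uncountable $S_0\subset S$ and $S_1\subset\omega_1\setminus S$ on which $M\le M^*$ and $a$ is constant, equal to $a_0$ and $a_1$ respectively.
\item Since $g^m_1(n,-)$ is constant off a finite set, compare its values on $S_0$ and on $S_1$. At $(m,n)=(M^*,M^*)$ this gives $a_1=a_0+1$. At $(m,n)=(M^*+1,M^*)$ it gives $a_1=a_0$, a contradiction.
\end{itemize}
Separately, $Y^-$ need not be a disjoint union of compact clopen sets. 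For $\vec\kappa^B=\langle\aleph_0,\aleph_1\rangle$, $Y^-=X(\aleph_0,\aleph_1)^-$ has nonzero $H^1$, which no disjoint union of profinite sets has.

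The failure lies in the ansatz $F_B=\phi$ itself, not in your particular patching. Take $\vec\kappa=\langle\aleph_0,\aleph_0,\aleph_0,\aleph_1\rangle$ and $B=\{2,3\}$. Put the family above on the clopen set $K=\alpha(\aleph_0)\times\{0\}\subset Y^-$. On $Y\setminus K$ set $\phi(y)(n,\beta)=f^*(n,\beta)$ if $n<\min y$ and $0$ otherwise, where $f^*$ is a nontrivial top cocycle. All hypotheses of the theorem hold. Yet the cocycle identity for $A'=\{1,2,3\}$, restricted to $K\times\omega\times\omega_1$, would give exactly the forbidden decomposition, with $g_0=F_{\{1,3\}}$ and $g_1=F_{\{1,2\}}$. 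So \emph{no} cocycle on $\xkappa^-$ has $F_B=\phi$. The conclusion does hold in this case, by Theorem~\ref{thm:main1}, but not through this construction.

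To turn your outline into a proof you have two options:
\begin{itemize}
\item Add the hypothesis that $\phi\res Y^-$ admits trivializers depending continuously on $y$. This holds in Proposition~\ref{prop:main3-app}, where $\phi(\vec m)$ is itself such a trivializer in the $\beta_0$-direction.
\item Allow $F_B$ to differ from $\phi$ over $Y^-$, and allow the $F_A$ with $|A\setminus B|\ge 2$ to be nonzero.
\end{itemize}
Also, your ``reading'' of continuity is really an extra assumption. Finitely supported top cochains are coboundaries, so coboundaries are dense in $Z^k$ and the quotient topology on $H^k$ is indiscrete. Continuity into $H^k$ therefore yields no lift, and $\phi$ must be taken $Z^k$-valued from the start, as Proposition~\ref{prop:ptwise-limit-converse} does.
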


Theorem \ref{thm:main3} can be read in the following manner: let $\vec f = \phi(\infty,\ldots,\infty)$. Then $\vec f$ is nonzero in $H^k(\vec \kappa\res_B)$, but the map $\phi$ exhibits $\vec f$ as a limit of coboundaries. Theorem \ref{thm:main3} makes use of this incompactness property to construct a nonzero cocycle in the intermediate cohomology of the larger space $\xkappa^-$.

\section{Proof of Theorem \ref{thm:main1}}\label{sec:main1}

We begin with an analysis of the cardinality and relationships between the 
$\calC_A(\vec\kappa)$, for varying $A$ and $\vec\kappa$. The following two 
facts will be useful; their proofs are routine.

\begin{facts*}\label{facts} \hfill
  \begin{enumerate}
    \item If $A \subset A'$, then $\calD_{A'} \subset \calD_A$ is open dense, so that the map 
    $f \mapsto f\restriction\calD_{A'}$ defines a dense embedding of topological rings
    $\calC_A \inj \calC_{A'}$. In light of this and related observations, we will frequently abuse notation when dealing with functions defined on different domains; for instance, given functions $f_A \in \calD_A$ and $f_B \in \calD_B$, we write $f_A + f_B$ for the function defined on the intersection $\calD_A \cap \calD_B = \calD_{A \cup B}$.
    \item If $j \in A$, then a continuous map $f \in \calC_A(\vec\kappa)$
    corresponds uniquely to a $\kappa_j$-tuple of maps 
    $f_\beta \in \calC_{A \setminus\{j\}}(\vec\kappa^j)$, and this correspondence
    in fact defines an isomorphism of topological rings
    $$
      \calC_A{(\vec\kappa)} \cong \left(\calC_{A \setminus \{j\}}(\vec\kappa^j)\right)^{\kappa_j}
    $$
    More generally, any inclusion $A \subset A'$ yields an isomorphism of topological rings
    $$
      \calC_{A'}{(\vec\kappa)} \cong 
      \calC_{A' \setminus A}(\vec\kappa^A)^{\prod\limits_{j \in A}\kappa_j}.
    $$
  \end{enumerate}
\end{facts*}

\
As a first application of these observations, we find the cardinality of 
$\calC_A(\vec\kappa)$ for arbitrary $A,\vec\kappa$.
\begin{proposition}\label{prop:cardC}
  Suppose $\vec\kappa = \langle \kappa_0 \leq \ldots \leq \kappa_n\rangle$ is a 
  weakly increasing $(n+1)$-tuple of cardinals, and let $A \subset n+1$. Then:
  \begin{enumerate}[(i)]
    \item if $A = \{0,\ldots,n\}$ then $|\calC_A(\vec\kappa)| = 2^{\kappa_n}$;
    \item if $A = \emptyset$ then $|\calC_A(\vec\kappa)| = \kappa_n$; and
    \item if $A$ and its complement $A^c$ are nonempty then 
    $$
      |\calC_A(\vec\kappa)| = \left(\kappa_{\max A^c}\right)^{\kappa_{\max A}}.
    $$
  \end{enumerate}
\end{proposition}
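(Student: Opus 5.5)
The plan is to prove (ii) directly, get (i) as a trivial count, and then reduce (iii) to (ii) via the product decomposition in Facts (2). Throughout, I assume the $\kappa_i$ are infinite, which is the situation of interest. (If some $\kappa_i$ are finite the formulas need adjusting, e.g.\ products of finite cardinals, but the argument is the same.) The one recurring piece of cardinal arithmetic is that for infinite cardinals $\prod_{j\in A}\kappa_j = \kappa_{\max A}$, because $\vec\kappa$ is weakly increasing.

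For (i), when $A=\{0,\ldots,n\}$ the domain $\calD_A=\prod_{i\le n}\kappa_i$ is discrete. Every function to $\bbZ_2$ is therefore continuous, so $|\calC_A| = 2^{\prod_i\kappa_i} = 2^{\kappa_n}$.

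For (ii), when $A=\emptyset$ the domain is the compact space $\calD_\emptyset=\xkappa$. A continuous map to $\bbZ_2$ is the same thing as a clopen subset, so I need to count the clopen subsets of $\xkappa$.
\begin{itemize}
  \item In each factor $\alpha(\kappa_i)$, the clopen sets are exactly the finite subsets of $\kappa_i$ and the cofinite sets containing $\infty$. There are $\kappa_i$ of these.
  \item Products of such sets form a clopen base of $\xkappa$, and this base has size $\prod_i\kappa_i=\kappa_n$.
  \item A clopen set in a compact space is compact and open, hence a finite union of basic sets. So there are at most $[\kappa_n]^{<\omega}$ clopen sets, i.e.\ at most $\kappa_n$.
  \item For the lower bound, the singletons $\{\vec x\}$ with $\vec x\in\prod_i\kappa_i$ are clopen, and there are $\kappa_n$ of them.
\end{itemize}
The step needing the most care is the upper bound, specifically the claim that every clopen set is a finite union of basic boxes. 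This is the standard compactness argument: cover the clopen set by basic opens contained in it and extract a finite subcover.

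For (iii), let $A$ and $A^c$ both be nonempty. Apply the general isomorphism of Facts (2) to the inclusion $\emptyset\subset A$. This gives
$$
  \calC_A(\vec\kappa)\cong \calC_\emptyset(\vec\kappa^A)^{\prod_{j\in A}\kappa_j}.
$$
The tuple $\vec\kappa^A$ is the restriction of $\vec\kappa$ to $A^c$. It is still weakly increasing and its largest entry is $\kappa_{\max A^c}$. Part (ii) applied to $\vec\kappa^A$ therefore gives $|\calC_\emptyset(\vec\kappa^A)|=\kappa_{\max A^c}$. The exponent is $\prod_{j\in A}\kappa_j=\kappa_{\max A}$. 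Combining, $|\calC_A(\vec\kappa)|=(\kappa_{\max A^c})^{\kappa_{\max A}}$, as claimed.
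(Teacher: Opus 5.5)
Your proof is correct. Parts (i) and (iii) match the paper's argument. One small labeling point: the isomorphism you display is the instance $A \subset A' = A$ of Fact 2, not the inclusion $\emptyset \subset A$, but the formula you actually use is the right one.

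The genuine difference is in (ii).
\begin{itemize}
\item The paper inducts on the length of $\vec\kappa$. A continuous $f$ is constant on some box $R=\prod_{i\le n}J_i$ around $(\infty,\ldots,\infty)$, with each $J_i$ cofinite. The rest of $\xkappa$ splits into finitely many pieces, each a finite union of copies of lower-dimensional cuboids $\prod_{i\in A}J_i \cong X(\vec\kappa\res_A)$. The function $f$ is then encoded by the finite data of $R$, its constant value there, and its restrictions to those pieces, which the inductive hypothesis counts.
\item You instead use a general fact: in a compact space every clopen set is a finite union of members of a fixed clopen base. Hence a compact space with a clopen base of infinite size $\mu$ has at most $\mu$ clopen sets, and here the base of boxes has size $\kappa_n$.
\end{itemize}

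Your route is shorter and needs no induction or bookkeeping of the decomposition. It also makes the lower bound explicit via the isolated points of $\prod_{i\le n}\kappa_i$, which the paper leaves implicit. And it shows the count depends only on the weight of $\xkappa$.

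The paper's route is more hands-on. It shows concretely how a continuous function behaves near the deleted corner: constant on a box, with the remainder reduced to faces. That is the same kind of local analysis near points at infinity the paper reuses in its later cocycle constructions.
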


\begin{proof}
  Item (i) follows because $\calD_{\{0,\ldots,n\}}$ is just the discrete topology
  on $\prod_{i \leq n} \kappa_i$. To show (ii), we induct on the length of $\vec \kappa$:
  \begin{itemize}
    \item If $n = 0$ then $\calC_\emptyset(\vec\kappa)$ is just the ring of 
    continuous functions $\alpha(\kappa_0) \to \bbZ_2$. These correspond to 
    the finite-or-cofinite subsets of $\kappa_0$, of which there are 
    $\kappa_0$ many.
    \item Now suppose $|\calC_\emptyset(\vec\kappa)| = \kappa_n$
    for any tuple $\vec\kappa$ of length $n$, and fix a continuous function 
    $f : \prod_{i \leq n} \alpha(\kappa_i) \to \bbZ_2$. Since the codomain 
    of $f$ is discrete, the preimage under $f$ of $f(\infty,\ldots,\infty)$ is a neighborhood
     of $(\infty,\ldots,\infty)$. Such a neighborhood contains some rectangle 
     $R = \prod_{i \leq n} J_i$, where $J_i \subset \alpha(\kappa_i)$ is 
     cofinite and contains $\infty$. We can then write the domain of $f$ as the 
     disjoint union of finitely many subspaces, as follows:
    \begin{align*}
      \prod_{i \leq n} \alpha(\kappa_i)
      &\cong \prod_{i \leq n} \left( J_i \amalg (\alpha(\kappa_i) \setminus J_i)\right) \\
      &\cong \coprod_{A \subset n+1} \left(\prod_{i \in A} J_i \times \prod_{i \not \in A} (\alpha(\kappa_i) \setminus J_i)\right).
    \end{align*}
    This is a disjoint union of $R$ and finitely many spaces of the form 
    $\prod_{i \in A} J_i$, where $A \subset n+1$ is of size $<n+1$. 
    Since $J_i$ is a cofinite subset of $\alpha(\kappa_i)$ containing $\infty$, 
    any space of this form is finite or homeomorphic to $\calC_\emptyset(\vec\kappa\restriction_A)$, 
    which by the inductive hypothesis has cardinality $\kappa_{\max A}$.

    We have thus described an injection 
    $$
      \calC_\emptyset(\vec \kappa) \inj 
      \prod_{i \leq n}[\kappa_i]^{<\aleph_0} \times \{0,1\} \times \prod_{A \in [n+1]^{<n+1}} \kappa_{\max A},
    $$
    where the first coordinates record the rectangle $R$ on which $f$ is constant, 
    the second records this constant value, and the last coordinates record the 
    restrictions of $f$ to each $\calC_\emptyset(\vec\kappa\restriction_A)$. The 
    latter product has cardinality $\kappa_n$, so 
    $|\calC_\emptyset(\vec \kappa)| = \kappa_n$ as desired to show (ii).
  \end{itemize}
    To conclude, we now need only note that by Fact 2 above we have
    $$
      \calC_A(\vec\kappa) \cong \calC_\emptyset(\vec\kappa^A)^{\prod\limits_{j \in A}\kappa_j}
    $$
    so that (iii) follows from (ii).
\end{proof}

\
We will also need the following lemma; note that its statement leverages the abuse of notation from Fact 1.

\begin{lemma}\label{lemma:intersectC}
  For every tuple $\vec\kappa$ of length $n+1$, 
  $$
    \left|\bigcap_{i \leq n} \calC_{\{i\}}(\vec\kappa)\right| \geq 2^{\kappa_0}.
  $$
\end{lemma}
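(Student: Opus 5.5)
We identify each $\calC_{\{i\}}(\vec\kappa)$ with a subring of $\calC_{n+1}(\vec\kappa)$, the ring of all functions $\prod_{i\le n}\kappa_i\to\bbZ_2$, via the dense embeddings of Fact 1. Under this identification a function $f$ on the discrete product lies in $\bigcap_{i\le n}\calC_{\{i\}}$ if and only if, for each $i$, it extends continuously to $\calD_{\{i\}}$. So the plan is to exhibit an injection from $\calP(\kappa_0)$ into this intersection.

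The first idea, letting $f$ depend only on $x_0$, fails. Continuity on $\calD_{\{i\}}$ for $i\neq 0$ would force $f$ to be eventually constant in $x_0$, so only finite or cofinite sets would be allowed. Instead I concentrate the support of $f$ on a diagonal.

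Since $\vec\kappa$ is weakly increasing, $\kappa_0\subset\kappa_i$ for every $i$. For $S\subset\kappa_0$, define $f_S:\prod_{i\le n}\kappa_i\to\bbZ_2$ by
$$
  f_S(\vec x)=1 \iff x_0=x_1=\cdots=x_n\in S .
$$
Distinct $S$ give distinct $f_S$, since $f_S(\beta,\ldots,\beta)=1$ exactly when $\beta\in S$. Hence there are $2^{\kappa_0}$ such functions. It remains to check that each $f_S$ lies in every $\calC_{\{i\}}$.

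Fix $i\le n$ and extend $f_S$ to $\calD_{\{i\}}$ by setting it equal to $0$ at every point having some coordinate equal to $\infty$. Points of $\calD_{\{i\}}$ with all coordinates finite are isolated, so continuity there is automatic. Now let $\vec y\in\calD_{\{i\}}$ with $y_j=\infty$ for some $j$, and note $j\neq i$ and $y_i=a$ is finite. Consider the basic open neighborhood consisting of points $\vec x$ with $x_i=a$, $x_j\in\alpha(\kappa_j)\setminus\{a\}$, and all other coordinates unrestricted. Every $\vec x$ in this neighborhood has $x_j\neq x_i$. So no point of it lies on the diagonal, and the extended $f_S$ vanishes there. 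Thus the extension is locally constant at $\vec y$, hence continuous, and $f_S\in\calC_{\{i\}}$ for all $i$.

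I do not expect a real obstacle; the only point of substance is finding a support pattern that is compatible with all $n+1$ directions at once. Finite-coordinate constraints alone are too weak, which is why the diagonal is used: it is discrete in a way that survives fixing any single coordinate. The case $n=0$ is included trivially, since then $\calC_{\{0\}}$ is all functions on $\kappa_0$.
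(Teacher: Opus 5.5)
Your proof is correct and is essentially the paper's argument. The paper uses the same diagonal indicator $\Delta B$ for $B\subset\kappa_0$ and gets injectivity from the diagonal values. It checks local constancy by fixing a finite coordinate $x_j$ and using the open ``punctured hyperplane'' $\{\vec y : y_j = x_j\}$ minus the single diagonal point, which proves continuity on all of $\xkappa^-$ at once; this plays the same role as your neighborhood $\{x_i=a,\ x_j\neq a\}$.
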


\begin{proof}
  Fix $B \subset \kappa_0$. We define 
  \begin{align*}
    \Delta B &: \kappa_0 \times \cdots \times \kappa_n \to \bbZ_2,\\
    \Delta B(\vec x) &= 
    \begin{cases}
      1 &\t{if $x_i = x_j \in B$ for all $i,j$,} \\
      0 &\t{otherwise.}
    \end{cases}
  \end{align*}

  To see $\Delta B \in \calC_{\{i\}}$ for all $i$ it suffices to show it is constant 
  in a neighborhood of any limit point in $\xkappa^-$. So fix $\vec x \in \xkappa^-$, 
  and let $x_j$ be a coordinate which is not $\infty$. If 
  $\vec x$ is the constant tuple $\langle x_j,\ldots,x_j\rangle$, then $\vec x$ is isolated, since 
  $x_j \neq \infty$; so certainly $\Delta B$ is constant on a neighborhood of $\vec x$. 
  Otherwise, $\vec x$ lies in the open ``punctured hyperplane'' 
  $$
    \{\vec y \in \xkappa^- : x_j = y_j\} \setminus \{\langle x_j,\ldots,x_j\rangle\},
  $$
  on which $\Delta B$ is constant.\footnote{Note that if $x_j \not \in \kappa_0$, the removal of the latter point is vacuous.}

  Lastly notice that $\Delta: 2^{\kappa_0} \to \bigcap_{i \leq n}\calC_{\{i\}}(\vec\kappa)$ is certainly injective, since if $n \in B \triangle B'$ then $\Delta B(n,\ldots,n) \neq \Delta B'(n,\ldots,n)$.
\end{proof}

We are now ready to prove Theorem \ref{thm:main1}. We will benefit from the following notation: for a fixed tuple $\vec\kappa$ of length $n+1$ and fixed $k \leq n$, write 
  $$
    \calP = \prod_{i=0}^{n-k} \kappa_i, \qquad \calQ = \prod_{i=n-k+1}^n \kappa_i.
  $$
We equip $\calP$ and $\calQ$ with the coordinatewise partial order, so that 
$|\calP| = \kappa_{n-k}$ and $\cf \calQ = \cf(\kappa_n)$.

We also make the following definition for convenience:
\begin{definition}\label{def:k-terminal}
  A subset $A \subset n+1$ is $k$-\textit{terminal} if it contains the last $k$ 
  elements of $n+1$, i.e., $\{n-k+1,\ldots,n\} \subset A$.
\end{definition}

\begin{proposition}\label{prop:files-cofinal}
  Suppose $\kappa_{n-k} < \cf(\kappa_n)$, and let 
  $\vec f = \langle f_A\rangle$ be a $(k-1)$-cochain. Then for any $k$-terminal 
  $A \in [n+1]^{k+1}$ there is a cofinal subset $C \subset \calQ$ such that the map 
  \begin{align*}
    C  &\to \calC_\emptyset(\kappa_0,\ldots,\kappa_{n-k}) \\
    \vec y &\mapsto (d\vec f)_{A}(-,\vec y)
  \end{align*}
  is constant. 
\end{proposition}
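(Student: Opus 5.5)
The plan is a pigeonhole argument on the values of the slice map, using $|\calC_\emptyset(\kappa_0,\ldots,\kappa_{n-k})| = \kappa_{n-k}$ from Proposition \ref{prop:cardC}(ii) together with the hypothesis $\kappa_{n-k} < \cf\kappa_n = \cf\calQ$.

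Since $A$ is $k$-terminal of size $k+1$, we have $A = \{j\}\cup\{n-k+1,\ldots,n\}$ for a unique $j \leq n-k$. Hence
$$
  (d\vec f)_A = f_{\{n-k+1,\ldots,n\}} + \sum_{i=n-k+1}^{n} f_{A\setminus\{i\}},
$$
and I will treat the two kinds of summands separately.

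The first summand is the easy case. By Fact 2, $f_{\{n-k+1,\ldots,n\}}$ is a $\calQ$-indexed family of elements of $\calC_\emptyset(\kappa_0,\ldots,\kappa_{n-k})$, a set of size $\kappa_{n-k}$. The fibres of the map $\vec y \mapsto f_{\{n-k+1,\ldots,n\}}(-,\vec y)$ therefore partition $\calQ$ into at most $\kappa_{n-k}$ pieces. I claim one piece is cofinal. If not, each piece $P_\xi$ has a witness $q_\xi$ with no element of $P_\xi$ above it. A common upper bound $q$ of the fewer than $\cf\calQ$ many $q_\xi$ then lies in some $P_\xi$ and sits above $q_\xi$, a contradiction. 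The existence of such a common upper bound is exactly the content of $\cf\calQ = \cf\kappa_n$ as used in the paper; if needed I will pass to a cofinal subproduct on which the coordinates $n-k+1,\ldots,n-1$ are bounded, so that boundedness is governed only by the last coordinate.

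The remaining summands $f_{A\setminus\{i\}}$ for $i$ in the terminal block are the main obstacle. These do not a priori have slices of small cardinality, since $\calC_{\{j\}}$ can have size $\kappa_{n-k}^{\kappa_j}$. Here I will use continuity in the coordinate $y_i$ at $\infty$: $f_{A\setminus\{i\}}$ is defined at $y_i = \infty$, so for each fixed value of the other coordinates it agrees with its value at $y_i=\infty$ off a finite set of $y_i$. I will process the terminal coordinates one at a time, starting with $i = n$. Because $\cf\kappa_n > \kappa_{n-k}$, the exceptional finite sets, taken over a set of fewer than $\cf\kappa_n$ many relevant parameters, are jointly bounded in $\kappa_n$. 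Above that bound the slice of $f_{A\setminus\{n\}}$ coincides with a function independent of $y_n$, so its contribution is governed by the $y^n$-data and can be absorbed into the pigeonhole count on a cofinal set.

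The key step is to show that the relevant parameters range over a set of size at most $\kappa_{n-k}$, namely via the clopen-support structure of Proposition \ref{prop:cardC} applied to the $\calP$-coordinates. Iterating this over $i = n, n-1, \ldots, n-k+1$, each time shrinking to a cofinal subset of $\calQ$, reduces the total slice of $(d\vec f)_A$ to a function of $\vec y$ taking at most $\kappa_{n-k}$ values on the current cofinal set. A final application of the pigeonhole argument above then produces the cofinal set $C$ on which $\vec y \mapsto (d\vec f)_A(-,\vec y)$ is constant.

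I expect the delicate point to be this uniformization of the finite exceptional sets for the middle coordinates $n-k+1 \le i < n$, whose cofinality may be small. My first attempt there will be to freeze those coordinates on a cofinal subfamily before varying $y_n$.
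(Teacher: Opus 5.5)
Your decomposition and strategy are the paper's: pigeonhole the term $f_{\{n-k+1,\ldots,n\}}$ using $|\calC_\emptyset(\kappa_0,\ldots,\kappa_{n-k})|=\kappa_{n-k}$, and use continuity at $y_i=\infty$ to control the terms $f_{A\setminus\{i\}}$. For $k=1$ this works as you describe: $\calQ=\kappa_n$, the only parameters are $\vec x\in\calP$, and $|\calP|=\kappa_{n-1}<\cf\kappa_n$.

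For $k\ge 2$ there is a genuine gap. Your claim that the iteration leaves a slice map with at most $\kappa_{n-k}$ values on a cofinal set cannot be proved, because the statement fails there. The point you are missing is this. For $i\in\{n-k+1,\ldots,n-1\}$ the index set $A\setminus\{i\}$ contains $n$, so $f_{A\setminus\{i\}}$ is never evaluated at $y_n=\infty$. Continuity only forces eventual independence of $y_i$. It puts no constraint on how the slices depend on $y_n$ or on the other terminal coordinates, and those slices live in $\calC_{\{j\}}(\kappa_0,\ldots,\kappa_{n-k})$, a set of size at least $2^{\kappa_j}$.

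Here is a counterexample. Take $\vec\kappa=\langle\aleph_0,\aleph_0,\aleph_1,\aleph_1\rangle$, $k=2$, $A=\{0,2,3\}$. Fix distinct $S_\gamma\subset\aleph_0$ for $\gamma<\aleph_1$, put $f_{\{0,3\}}(\vec x)=1$ iff $x_0\in S_{x_3}$, and set all other $f_B=0$.
\begin{itemize}
\item $f_{\{0,3\}}$ is continuous on $\calD_{\{0,3\}}$, since it depends only on the discrete coordinates $x_0,x_3$.
\item $(d\vec f)_A=f_{\{0,3\}}$, and its slice at $\vec y=(y_2,y_3)$ is the indicator of $S_{y_3}$ in $x_0$.
\item This slice is constant on $C$ only if $y_3$ is constant on $C$. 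So no cofinal $C\subset\calQ=\aleph_1\times\aleph_1$ exists, even though every terminal coordinate here has cofinality $>\kappa_{n-k}$.
\end{itemize}
The paper's proof breaks at the same spot. It passes from ``$f_{A\setminus\{j\}}(\vec x,\vec y)$ is independent of $y_j$ for $y_j>\beta(\vec x,j)$'' to ``$\sum_j f_{A\setminus\{j\}}(\vec x,\vec y)$ is independent of $\vec y$ for $\vec y>\vec\beta(\vec x)$'', which does not follow. Indeed, for $k\ge 2$ the cocycle of Proposition \ref{prop:nontrivial-cocycle} is a coboundary $d\vec g$. Take $g_{\{j\}\cup\{n-k+1,\ldots,n-1\}}(\vec x)=\sigma(x_{n-k+1})(x_0,\ldots,x_{n-k})$ for each $j\le n-k$, and all other $g_B=0$. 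This is continuous because $x_{n-k+1}$ is a finite coordinate on that domain.

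Your first-summand pigeonhole has a separate problem. To bound $\kappa_{n-k}$ many points of $\calQ$ you need $\kappa_{n-k}<\cf\kappa_i$ for every $i\ge n-k+1$, not just for $i=n$. Your fallback does not help: a subset of $\calQ$ on which coordinates $n-k+1,\ldots,n-1$ are bounded is not cofinal in the coordinatewise order. For example, take $\vec\kappa=\langle\aleph_0,\aleph_0,\aleph_0,\aleph_1\rangle$, $k=2$, $A=\{0,2,3\}$. Let $f_{\{2,3\}}(\vec x)=1$ iff $x_0=x_1=x_2$, and all other components be zero. This is continuous, since $(x_2,x_2)$ is isolated in $\alpha(\aleph_0)^2$. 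Its slice at $\vec y$ determines $y_2$, so it is constant on no cofinal subset of $\aleph_0\times\aleph_1$.

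The uniformization issue you flagged for the middle coordinates is also real: the exceptional finite sets must be bounded over the other terminal coordinates too, not just over $\calP$. But it is not the fatal obstruction.
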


\
\begin{proof}
  Expanding the definition of $d\vec f$ using that $A$ is $k$-terminal,
  \begin{align*}
    (d\vec f)_A = f_{\{n-k+1,\ldots,n\}} + \sum_{j = n-k+1}^n f_{A \setminus \{j\}}.
  \end{align*}
  The codomain of $f_{\{n-k+1,\ldots,n\}}(-,\vec y),$ i.e., 
  $\calC_\emptyset(\kappa_0,\ldots,\kappa_{n-k}),$ has cardinality 
  $\kappa_{n-k} < \cf \calQ$, so the first summand is constant on a cofinal
   subset $C_0$ of $\calQ$.
  
  For the remaining summands, fix $j \in \{n-k+1,\ldots,n\}$. 
  Since $f_{A \setminus \{j\}} \in \calC_{A \setminus \{j\}}$, by Fact 2 we have that for any $\vec x \in \calP$
  $$
    f_{A \setminus\{j\}}(\vec x, -) \in \calC_{\{n-k+1,\ldots,\hat{j},\ldots,n\}}(\kappa_{n-k+1},\ldots,\kappa_n).
  $$
  That is, if $\vec y \in X(\kappa_{n-k+1},\ldots,\kappa_n)$ satisfies 
  $y_i = \infty$ iff $i = j$, then $f_{A \setminus \{j\}}(\vec x,-)$ must be constant in a 
  neighborhood of $\vec y$. Thus there is $\beta(\vec x, j) \in \kappa_j$ 
  such that $f_{A \setminus \{j\}}(\vec x,\vec y)$ is independent of $y_j$ whenever $y_j > \beta(\vec x, j)$. 
  Taking 
  $$
    \vec\beta(\vec x) = \langle \beta(\vec x, j): n-k+1 \leq j \leq n\rangle \in \calQ,
  $$
  we find that $\sum_{j = n-k+1}^n f_{A \setminus \{j\}}(\vec x, \vec y)$ is independent of $\vec y$ 
  whenever $\vec y > \vec\beta(\vec x)$ coordinatewise.

  The above describes a map $\vec x \mapsto \vec \beta(\vec x): \calP \to \calQ$; so 
  again by our assumption the image of this map must be bounded in $\calQ$. Taking 
  $\vec \beta$ to be a common upper bound in $\calQ$ of all the $\vec \beta(\vec x)$, 
  we find that $\sum_{j = n-k+1}^n f_{A \setminus \{j\}}(\vec x, \vec y)$ is constant for $\vec y > \vec \beta$.

  To conclude, we let $C = \{\vec y \in C_0 : \vec y > \vec \beta\}$. Then $C$ 
  remains cofinal in $\calQ$, and the above arguments and the expanded definition of 
  $d\vec f$ show that $(d \vec f)_A(-,\vec x)$ is constant on $C$.
\end{proof}

Proposition \ref{prop:files-cofinal} describes a condition satisfied by any
$k$-coboundary. As the final step, we construct a $k$-cocycle which does not 
satisfy this condition.

\begin{proposition}\label{prop:nontrivial-cocycle}
  Suppose $2^{\kappa_0} \geq \kappa_{n-k+1}$. Then there is a $k$-cocycle $\vec f = \langle f_A\rangle$ such that for any $k$-terminal $A$ the map $\vec y \mapsto f_A(-,\vec y)$ is not constant on any cofinal subset of $\calQ$. In particular, if $\kappa_{n-k} < \cf(\kappa_n)$, then $\vec f$ represents a nonzero cohomology class in $H^k(\xkappa^-,\underline{\bbZ_2})$.
\end{proposition}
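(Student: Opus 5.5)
We build the cocycle directly from Lemma \ref{lemma:intersectC}, putting the same function on every $k$-terminal coordinate and zero elsewhere. Write $T = \{n-k+1,\ldots,n\}$, so the $k$-terminal sets in $[n+1]^{k+1}$ are exactly $A_i = \{i\} \cup T$ for $i \leq n-k$. We assume, as throughout, that the $\kappa_j$ are infinite.

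\emph{Construction.} Apply Lemma \ref{lemma:intersectC} to the tuple $(\kappa_0,\ldots,\kappa_{n-k})$. It gives at least $2^{\kappa_0} \geq \kappa_{n-k+1}$ distinct functions in $\bigcap_{i \leq n-k} \calC_{\{i\}}(\kappa_0,\ldots,\kappa_{n-k})$. Fix an injection $\beta \mapsto h_\beta$ from $\kappa_{n-k+1}$ into this intersection; this is the one place choice is used. For $\vec x \in \calP$ and $\vec y \in \calQ$ define
$$
  F(\vec x,\vec y) = h_{y_{n-k+1}}(\vec x).
$$
Set $f_{A_i} = F$ for every $i \leq n-k$, and $f_A = 0$ for every non-$k$-terminal $A \in [n+1]^{k+1}$.

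\emph{Continuity.} We check $F \in \calC_{A_i}(\vec\kappa)$. By Fact 2, an element of $\calC_{A_i}(\vec\kappa)$ is the same as a $\prod_{j\in T}\kappa_j$-indexed family of elements of $\calC_{\{i\}}(\kappa_0,\ldots,\kappa_{n-k})$. The family given by $F$ is $\vec y \mapsto h_{y_{n-k+1}}$. Each member lies in $\calC_{\{i\}}$ by the choice of the $h_\beta$, so $F \in \calC_{A_i}$.

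\emph{Cocycle condition.} Let $A' \in [n+1]^{k+2}$.
\begin{itemize}
  \item If $T \not\subset A'$, then no $(k+1)$-subset of $A'$ is $k$-terminal. Every term of $(d\vec f)_{A'}$ is therefore $0$.
  \item If $T \subset A'$, then $A' = \{i,i'\} \cup T$ with $i \neq i'$ both $\leq n-k$. Deleting $i$ or $i'$ leaves the terminal sets $A_{i'}$ or $A_i$, each contributing $F$. Deleting an element of $T$ leaves a non-terminal set, contributing $0$. Hence $(d\vec f)_{A'} = F + F = 0$ in $\bbZ_2$, computed on $\calD_{A'}$ via the convention of Fact 1.
\end{itemize}
So $\vec f$ is a $k$-cocycle.

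\emph{Non-constancy.} For any $k$-terminal $A$, the map $\vec y \mapsto f_A(-,\vec y) = h_{y_{n-k+1}}$ is injective in the coordinate $y_{n-k+1}$. Suppose it were constant on some $C \subset \calQ$. Then all elements of $C$ share the same $(n-k+1)$-th coordinate $\beta$. Such a set is not cofinal, since $(\beta+1,\ldots) \in \calQ$ has no element of $C$ above it.

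\emph{The ``in particular''.} Suppose $\kappa_{n-k} < \cf(\kappa_n)$ and $\vec f = d\vec g$ for some $(k-1)$-cochain $\vec g$. Proposition \ref{prop:files-cofinal} gives, for a $k$-terminal $A$, a cofinal $C \subset \calQ$ on which $\vec y \mapsto (d\vec g)_A(-,\vec y) = f_A(-,\vec y)$ is constant. This contradicts the non-constancy just shown, so $\vec f$ represents a nonzero class in $H^k(\xkappa^-,\underline{\bbZ_2})$.

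\emph{Main obstacle.} The delicate point is the choice of support. The function placed on the terminal coordinates must be continuous on each $\calD_{A_i}$ simultaneously, which is exactly why we need functions lying in all the $\calC_{\{i\}}$ at once, as Lemma \ref{lemma:intersectC} provides. It must also cancel in pairs, which is why the same $F$ is used on every $A_i$. The step to check carefully is the continuity claim via Fact 2, in particular that independence across the discrete $\vec y$-coordinates causes no problem.
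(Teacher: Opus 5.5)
Your construction, the continuity check via Fact 2, the cocycle computation and the non-constancy argument coincide with the paper's proof, and they are correct. The gap is the last step, which you (exactly as the paper does) hand off to Proposition \ref{prop:files-cofinal}. For $k \geq 2$ that proposition is false, and the cocycle you built is in fact always a coboundary.

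To see this, choose $t \in T$ with $t \neq n-k+1$; this is possible precisely when $k \geq 2$. Let $\vec g$ be the $(k-1)$-cochain with $g_{A_i \setminus \{t\}} = F$ for each $i \leq n-k$ and all other components $0$. Since $F$ does not depend on $x_t$, while $x_{n-k+1}$ stays finite on $\calD_{A_i \setminus \{t\}}$, your own Fact 2 argument shows $F \in \calC_{A_i \setminus \{t\}}(\vec\kappa)$. Now compute $d\vec g$:
\begin{itemize}
\item every face of $A_i$ other than $A_i \setminus \{t\}$ contains $t$, so $(d\vec g)_{A_i} = F$;
\item each set $\{i,i'\} \cup (T \setminus \{t\})$ has exactly two faces of the form $A_j \setminus \{t\}$, so $d\vec g$ is $F + F = 0$ there;
\item no other $(k+1)$-set has such a face.
\end{itemize}
Hence $d\vec g = \vec f$. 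For instance, with $\vec\kappa = \langle \aleph_0,\aleph_0,\aleph_1,\aleph_1 \rangle$ and $k = 2$, both hypotheses hold, yet $\vec f = d\vec g$ with $g_{\{0,2\}} = g_{\{1,2\}} = F$.

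The step of Proposition \ref{prop:files-cofinal} that breaks is the assertion that $\sum_{j \in T} f_{A \setminus \{j\}}(\vec x, \vec y)$ is independent of $\vec y$ once $\vec y > \vec\beta$. Each summand is only eventually independent of its own coordinate $y_j$. In the example above, the summand $g_{A_i \setminus \{t\}} = F$ ignores $y_t$ but varies injectively with $y_{n-k+1}$. The proposition's pigeonhole step also needs $\kappa_{n-k} < \cf \kappa_j$ for every $j \in T$, not only for $j = n$.

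The ``in particular'' clause is not merely unproved; it can fail. For $\vec\kappa = \langle \aleph_0,\aleph_0,\aleph_0,\aleph_1 \rangle$ and $k = 2$ its hypotheses hold. But Proposition \ref{prop:fub-iff-zero} gives a $1$-sheer partition, so Proposition \ref{prop:trivialize} yields $H^2(\xkappa^-,\underline{\bbZ_2}) = 0$.

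For $k = 1$ your argument is sound: $T = \{n\}$, $\calQ = \kappa_n$ is linearly ordered, and Proposition \ref{prop:files-cofinal} does hold. For $k \geq 2$, however, the non-constancy property you verified is no obstruction to being a coboundary. The real difficulty is nontriviality, not continuity as your closing remark suggests. Settling it would need a cocycle that genuinely depends on all the terminal coordinates, together with a valid invariant separating it from coboundaries.
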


\begin{proof}
  Starting from Lemma \ref{lemma:intersectC}, we note that 
  \begin{align*}
    \left| \bigcap_{i \leq n-k} \calC_{\{i\}}(\kappa_0,\ldots,\kappa_{n-k}) \right|
    &\geq 2^{\kappa_0}
  \end{align*}
  and the latter is at least $\kappa_{n-k+1}$ by our hypothesis. 
  So fix an injection 
  $$
    \sigma : \kappa_{n-k+1} \inj \bigcap_{i \leq n-k} \calC_{\{i\}}(\kappa_0,\ldots,\kappa_{n-k}),
  $$
  and for $A \in [n+1]^{k+1}$ define 
  \begin{align*}
    f_A &: \calD_A(\vec\kappa) \to \bbZ_2, \\
    f_A(\vec x) &:= 
    \begin{cases}
      \sigma(x_{n-k+1})(x_0,\ldots,x_{n-k}) \quad &\t{if $A$ is $k$-terminal,} \\
      0\quad &\t{otherwise.}
    \end{cases}
  \end{align*}
  Notice first that each $f_A$ is well-defined, since if 
  $A = \{i\} \cup \{n-k+1,\ldots,n\}$ and $\vec x \in \calD_A(\vec\kappa)$ then 
  $x_i,x_{n-k+1} \neq \infty$. 
  
  To see that each $f_A$ is continuous, assume $A$ is $k$-terminal; otherwise 
  $f_A$ is identically zero and thus certainly continuous. Recalling the isomorphism from Fact 2 
  \begin{align*}
    \calC_{\{i\}}(\kappa_0,\ldots,\kappa_{n-k})^Q &\cong \calC_{\{i\} \cup \{n-k+1,\ldots,n\}}(\vec\kappa) \\
    &= \calC_A(\vec\kappa),
  \end{align*}
  we see that $f_A$ arises from the function 
  $$
    \langle x_{n-k+1},\ldots,x_n \rangle \mapsto \sigma(x_{n-k+1})
  $$
  and thus lies in $\calC_A(\vec\kappa)$.
  
  To see $\vec f$ defines a cocycle, choose any $A' \in [n+1]^{k+2}$. 
  Then by definition
  \begin{align*}
    (d\vec f)_{A'} = \sum_{i \in A'} f_{A' \setminus \{i\}}.
  \end{align*}
  We will show this sum always vanishes. Again, the case where $A'$ is not 
  $k$-terminal is immediate since every term is zero. So suppose 
  $A' = \{i_0,i_1\} \cup \{n-k+1,\ldots,n\}$. Then the only nonzero summands are 
  those indexed by $i=i_0,i_1$, and $d\vec f$ is independent of these coordinates; so
  \begin{align*}
    (df)_{A'}(\vec x) &= f_{A' \setminus \{i_0\}}(\vec x) + f_{A' \setminus \{i_1\}}(\vec x) \\
    &=0.
  \end{align*}
  So $\vec f$ is a cocycle, as desired.
  
  We conclude by noting that, for any $k$-terminal $A \in [n+1]^{k+1}$, the map 
  \begin{align*}
    \calQ &\to \calC_\emptyset(\kappa_0,\ldots,\kappa_{n-k}) \\
    \vec x &\mapsto f_A(-,\vec x) \\
           &\equiv \sigma(x_{n-k+1})
  \end{align*}
  cannot be constant on a cofinal subset of $\calQ$; any such subset certainly 
  contains tuples whose first coordinates disagree, and the injectivity of 
  $\sigma$ guarantees that the $f_A(-,\vec x)$ will be distinct for tuples with 
  differing first coordinate. Hence $\vec f$ is nontrivial, as desired.
\end{proof}

\section{Constructions via sheer partitions}\label{sec:main2}

As mentioned above, our second main computational tool is the notion of a $k$-sheer partition of $\xkappa^-$. To define this, we begin with a more careful dissection of the topology of $\xkappa$. We make the following notation:

\begin{definition}\label{def:basis}
  For any $A \subset n+1$, write $F_A$ for the 
$A$-\textit{facet} of $\xkappa$, i.e.,
$$
  F_A = \{\vec x \in \xkappa : x_i \neq \infty \t{ if and only if } i \in A\}.
$$
Note that $\xkappa$ is the disjoint union of the facets $F_A$ for $A \subset n+1$, and
for any $A \subset n+1$ we have
$$
  \calD_A(\vec\kappa) = \bigcup_{A \subset B \subset n+1} F_B.
$$

\
A basis for the topology of $\xkappa$ can then be given as follows: for any $A \subset n+1$,
any $\vec x \in F_A$, and any tuple of finite sets
$$
  \vec \calE \in \prod_{j \not \in A} [\kappa_j]^{<\aleph_0},
$$
we define the open set
$$
  \calN_A(\vec x, \vec \calE) := \{\vec y \in \xkappa : \vec y \res_A = \vec x \res_A 
  \t{ and } y_j \not \in \calE_j \t{ for all } j \not \in A\}.
$$
\end{definition}

That is, $\vec y \neq \vec x$ lies in this neighborhood if and only if it agrees with $\vec x$ wherever $x_j$ is $\infty$, and all other coordinates are either $\infty$ or lie outside the corresponding finite set $\calE_j$. Figure \ref{fig:cuboid} illustrates the sets $F_A$ and $\calN_A(\vec x, \vec \calE)$
for a particular choice of $\vec x$ and $\vec \calE$.

\begin{figure}[htbp]
  \centering

  \tdplotsetmaincoords{70}{110}
  \begin{tikzpicture}[scale=2,tdplot_main_coords]
  \coordinate (O) at (0,0,0);
  \tdplotsetcoord{P}{3.2}{75}{65}

  \draw[fill=pink,opacity=1,opacity=0.5,scale=0.3,xshift=-5pt,yshift=-73pt] 
  (0,0.1,0.5)  .. controls (0,0.5,0.8)
  .. (1,1.5,1.1)      .. controls (-1,1,0.3) and (-1,1.3,0.3)
  .. (1,2.3,1.1)      .. controls (1.5,3,1.5) and (1.5,3.8,1)
  .. (1,2.6,0.1)      .. controls (1,2,0)
  .. (1,1,0.1)      .. controls (1,1,0.1) and (1,0.2,0)
  .. (1,0.1,0.25)      .. controls (1,0,0.5)
  .. cycle;

  \draw[fill=pink,opacity=1,opacity=0.5] 
    (0,0.1,0.5)  .. controls (0,0.5,0.8)
  .. (1,1.5,1.1)      .. controls (-1,1,0.3) and (-1,1.3,0.3)
  .. (1,2.3,1.1)      .. controls (1.5,3,1.5) and (1.5,3.8,1)
  .. (1,2.6,0.1)      .. controls (1,2,0)
  .. (1,1,0.1)     node[color=black,opacity=1,pos=1,inner sep=2em,anchor=north]{Interior $=F_{012}$} .. controls (1,1,0.1) and (1,0.2,0)
  .. (1,0.1,0.25)      .. controls (1,0,0.5)
  .. cycle;

  \fill [cyan,opacity=0.25](Pxy) -- (Py) -- (Pyz) node[blue,pos=0.3,anchor=east,opacity=1]{$F_{01}$} -- (P);
  \fill [cyan,opacity=0.25](Pxy) -- (Px) node[blue,pos=0.8,anchor=south,inner sep = 1em,opacity=1]{$F_{02}$} -- (Pxz)  -- (P);
  \fill [cyan,opacity=0.25](Pyz) -- (Pz) node[blue,pos=0.7,anchor=north,opacity=1,]{$F_{12}$} -- (Pxz)  -- (P);

  \draw[thick,->] (O) -- (0,0,1) node[anchor=south]{$0$};
  \draw[thick,->] (O) -- (2,0,0) node[anchor=north east]{$1$};
  \draw[thick,->] (O) -- (0,3,0) node[anchor=north west]{$2$};
  \draw[thick] (O) -- (Px);
  \draw[thick] (O) -- (Py);
  \draw[thick] (O) -- (Pz);
  \draw[thick] (Px) -- (Pxy);
  \draw[thick] (Py) -- (Pxy);
  \draw[thick] (Px) -- (Pxz);
  \draw[thick] (Pz) -- (Pxz);
  \draw[thick] (Py) -- (Pyz);
  \draw[thick] (Pz) -- (Pyz);
  \draw[ultra thick, color=orange] (Pxy) -- (P) node[pos=0.1,anchor=south east]{$F_0$};
  \draw[ultra thick, color=orange] (Pyz) -- (P) node[pos=0.3,anchor=east,xshift=-2pt]{$F_1$};
  \draw[ultra thick, color=orange] (Pxz) -- (P) node[pos=0.175,anchor=south east]{$F_2$};

  \filldraw[fill=white] (P) circle (1pt);
  \end{tikzpicture}

  \begin{tikzpicture}[scale=2,tdplot_main_coords]
    \coordinate (O) at (0,0,0);
    \tdplotsetcoord{P}{3.2}{75}{65}

    \draw[thick,->] (0,0,0) -- (0,0,1) node[anchor=south]{$0$};
    \draw[thick,->] (0,0,0) -- (2,0,0) node[anchor=north east]{$1$};
    \draw[thick,->] (0,0,0) -- (0,3,0) node[anchor=north west]{$2$};
    \draw[thick] (O) -- (Px);
    \draw[thick] (O) -- (Py);
    \draw[thick] (O) -- (Pz);
    \draw[thick] (Px) -- (Pxy);
    \draw[thick] (Py) -- (Pxy);

    \coordinate (X) at ($ (Pyz)!0.5!(P) $);
    \coordinate (Xa) at ($ (Py)!0.5!(Pxy) $);
    \coordinate (Xb) at ($ (O)!0.5!(Px) $);
    \coordinate (Xc) at ($ (Pz)!0.5!(Pxz) $);

    \draw[thick] 
      ($ (Xa) + (0,0,-0.05) $) -- ($ (Xa) + (0,0,0.05) $);
    \draw[thick] 
      ($ (Xb) + (0,0,-0.05) $) -- ($ (Xb) + (0,0,0.05) $);
    \draw[thick] 
      ($ (Xc) + (0,0,-0.05) $) -- ($ (Xc) + (0,0,0.05) $);
    \draw[dashed,thick] (Xa) -- (X);
    \draw[dashed,thick] (Xa) -- (Xb);
    \draw[dashed,thick] (Xb) -- (Xc);
    \draw[dashed,thick] (Xc) -- (X);

    \fill[yellow,opacity=0.5] 
      (Xb) -- ($ (Xb) + (0,0,0.2) $) -- 
      ($ (Xb) + (0,0.3,0.2) $) -- ($ (Xb) + (0,0.3,0) $);
    
    \fill[yellow,opacity=0.5] 
      ($ (Xb) + (0,0,0.833) $) -- ($ (Xb) + (0,0.3,0.833) $) --
      ($ (Xb) + (0,0.3,0.5) $) -- ($ (Xb) + (0,0,0.5) $);

    \fill[yellow,opacity=0.5] 
      ($ (Xb) + (0,0.9,0) $) -- ($ (Xb) + (0,1.5,0) $) --
      ($ (Xb) + (0,1.5,0.2) $) -- ($ (Xb) + (0,0.9,0.2) $);

    \fill[yellow,opacity=0.5] 
      ($ (Xb) + (0,0.9,0.5) $) -- ($ (Xb) + (0,1.5,0.5) $) --
      ($ (Xb) + (0,1.5,0.833) $) -- ($ (Xb) + (0,0.9,0.833) $);

    \fill[yellow,opacity=0.5] 
      ($ (Xb) + (0,2,0) $) -- ($ (Xb) + (0,2.8,0) $) --
      ($ (Xb) + (0,2.8,0.2) $) -- ($ (Xb) + (0,2,0.2) $);

    \fill[yellow,opacity=0.5] 
      ($ (Xb) + (0,2,0.5) $) -- ($ (Xb) + (0,2.8,0.5) $) --
      ($ (Xb) + (0,2.8,0.833) $) -- ($ (Xb) + (0,2,0.833) $);

    \draw[thick] (Px) -- (Pxz);
    \draw[thick] (Pz) -- (Pxz);
    \draw[thick] (Py) -- (Pyz);
    \draw[thick] (Pz) -- (Pyz);
    \draw[thick] (Pxy) -- (P);
    \draw[thick] (Pyz) -- (P);
    \draw[thick] (Pxz) -- (P);

    \draw[ultra thick,color=red] (1.3,0,0.2) -- (1.3,0,0.5);
    \draw[ultra thick,color=red,decorate,decoration ={
      brace,amplitude=5pt,raise=3pt
    }] (1.3,0,0.2) -- (1.3,0,0.5)
      node[midway,anchor=east,xshift=-7pt]{$\calE_0$};

    \draw[dashed, thick,color=red] (1.3,0,0.2) -- ($ (Xb) + (0,0,0.2) $);
    \draw[dashed, thick,color=red] (1.3,0,0.5) -- ($ (Xb) + (0,0,0.5) $);
    \draw[dashed, thick,color=red] 
      ($ (Xb) + (0,0,0.2) $) -- ($ (Xa) + (0,0,0.2) $);
    \draw[dashed, thick,color=red] 
      ($ (Xb) + (0,0,0.5) $) -- ($ (Xa) + (0,0,0.5) $);
  
    \draw[ultra thick,color=blue] (1.3,0.3,0) -- (1.3,0.9,0);
    \draw[ultra thick,color=blue,decorate,decoration={
      brace,amplitude=5pt,raise=2.5pt,mirror
    }] (1.3,0.3,0) -- (1.3,0.9,0);
    \draw[ultra thick,color=blue,decorate,decoration={
      brace,amplitude=5pt,raise=10pt,mirror
    }] (1.3,0.6,0) -- (1.3,1.75,0)
      node[midway,yshift=-23pt,xshift=-1pt]{$\calE_2$};
    \draw[dashed, thick,color=blue] (1.3,0.3,0) -- ($ (Xb) + (0,0.3,0) $);
    \draw[dashed, thick,color=blue] (1.3,0.9,0) -- ($ (Xb) + (0,0.9,0) $);
    \draw[dashed, thick,color=blue] 
      ($ (Xb) + (0,0.3,0) $) -- ($ (Xa) + (0,-2.5,0.833) $);
    \draw[dashed, thick,color=blue] 
      ($ (Xb) + (0,0.9,0) $) -- ($ (Xa) + (0,-1.9,0.833) $);

    \draw[ultra thick,color=blue] (1.3,1.5,0) -- (1.3,2,0);
    \draw[ultra thick,color=blue,decorate, decoration={
      brace,amplitude=5pt,raise=2.5pt,mirror
    }] (1.3,1.5,0) -- (1.3,2,0);
    \draw[dashed, thick,color=blue] (1.3,1.5,0) -- ($ (Xb) + (0,1.5,0) $);
    \draw[dashed, thick,color=blue] (1.3,2,0) -- ($ (Xb) + (0,2,0) $);
    \draw[dashed, thick,color=blue] 
      ($ (Xb) + (0,1.5,0) $) -- ($ (Xa) + (0,-1.3,0.833) $);
    \draw[dashed, thick,color=blue] 
      ($ (Xb) + (0,2,0) $) -- ($ (Xa) + (0,-0.8,0.833) $);

    \filldraw[fill=white] (P) circle (1pt);
    \filldraw[fill=black] (X) circle (0.75pt);

    \node[anchor=south east] at (X) {$\vec x$};
    \node[anchor=north west] at (Xa) {$x_1$};

    \node at (0,1,-1.2){$A = \{1\}, \vec\calE = \langle \color{red}\calE_0\color{black},\color{blue}{\calE_2} \color{black} \rangle \in [\aleph_0]^{<\aleph_0} \times [\aleph_2]^{<\aleph_0}$};
    \node at (0,1,-1.5){\colorbox{yellow}{$N_{A,\vec\calE}(\vec x)$}$ = \{\vec y \in X(\aleph_0,\aleph_1,\aleph_2)^- : y_0 \not \in \calE_0, y_1 = x_1, y_2 \not \in \calE_2\}$};
    \end{tikzpicture}

  \caption{The facets $F_A$ and basic open set $\calN_A(\vec x, \vec \calE)$. Here $\vec\kappa = \langle \aleph_0, \aleph_1, \aleph_2 \rangle$ and $A = \{1\}$.}
  \label{fig:cuboid}
\end{figure}

Now, we define the main object of this section.
\begin{definition}\label{def:fub}
  Let $\vec\kappa = \langle \kappa_0,\ldots,\kappa_n \rangle$ be a tuple of cardinals, and fix $-1 \leq k \leq n$. We call an $(n+1)$-element partition $\langle Y_i : i \leq n \rangle$ of $\xkappa^-$ $k$-\textit{sheer} if, for any $A \in [n+1]^{\geq k+1}$, the following criteria hold:
  \begin{enumerate}
    \item Whenever $i \not \in A$ the set $Y_i$ is clopen in $\calD_A$; and
    \item for any $\vec x \in F_A$, there is $\vec \calE \in \prod\limits_{j \not \in A} [\kappa_j]^{<\aleph_0}$ such that
    $$
      \calN_A(\vec x, \vec \calE) \subset \bigcup_{i \in A} Y_i.
    $$
  \end{enumerate}
\end{definition}
\pagebreak
\begin{remark}\label{rem:fub}
  We make a few observations.
  \begin{itemize}
    \item Condition (1) will allow us to define continuous functions $f_A$ on $\calD_A$ by defining them piecewise on the $Y_i \cap \calD_A$; continuity only needs to be checked on the individual pieces.
    \item 
      Condition (2) equivalently asserts that for any $A \in [n+1]^{\geq k+1}$, the set
      $\bigcup_{i \in A} Y_i$ contains an open neighborhood of $F_A$, i.e.
      $$
        F_A \subset \text{int}\big(\bigcup_{i \in A} Y_i\big).
      $$
      This reformulation also makes it clearer that $F_A$ is disjoint from $Y_i$ whenever $i\not \in A$. However, it is the more verbose condition (2) which will be most useful to us.
    \item It is immediate from the definition that a $k$-sheer partition is $\ell$-sheer whenever $k \leq \ell$; moreover, if $\vec\kappa \neq \emptyset$ then $(-1)$-sheer partitions never exist, and any partition is $n$-sheer. Consequently, for every nonempty tuple there is a $k_0$ such that a $k$-sheer partition exists if and only if $k_0 \leq k$.
  \end{itemize}
\end{remark}

The special case $k = n-1$ is closely related to the ``almost constant'' functions considered by Aoki:
\begin{proposition}
  Let $\vec \kappa = \langle \kappa_0,\ldots,\kappa_n \rangle$. A partition of $\prod_{i \leq n} \kappa_i$ is $(n-1)$-sheer if and only if each $Y_i$ is ``finite in $i$'', in the sense that for any $\vec x \in \prod_{j \neq i} \kappa_i$, there are only finitely many $\vec y \in Y_i$ extending $\vec x$.
\end{proposition}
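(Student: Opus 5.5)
The plan is to unwind Definition~\ref{def:fub} for $k=n-1$. Since the proposition speaks of a partition of $\prod_{i\le n}\kappa_i = F_{n+1}$ while Definition~\ref{def:fub} concerns partitions of $\xkappa^-$, I read the statement as follows: a partition $\langle Y_i\rangle$ of $F_{n+1}$ extends to an $(n-1)$-sheer partition of $\xkappa^-$ if and only if each $Y_i$ is finite in $i$. Equivalently, the restriction to $F_{n+1}$ of any $(n-1)$-sheer partition has pieces that are finite in $i$. In the other direction, such a partition of $F_{n+1}$ can always be extended.

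For $k=n-1$ the only relevant sets are $A=n+1$ and $A=A_i:=(n+1)\setminus\{i\}$ for $i\le n$. For $A=n+1$ both conditions are vacuous. Condition (1) is vacuous because no index lies outside $A$. Condition (2) holds because $\calN_{n+1}(\vec x,\emptyset)=\{\vec x\}$, and $\vec x$ lies in some $Y_j$ with $j\in A$. So everything reduces to checking conditions (1) and (2) for each $A_i$.

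Here $\calD_{A_i}=F_{A_i}\cup F_{n+1}$. A point $\vec x\in F_{A_i}$ is just some $\vec x^i\in\prod_{j\ne i}\kappa_j$ with $\infty$ in the $i$-th slot. The neighbourhood $\calN_{A_i}(\vec x,\calE_i)$ consists of $\vec x$ itself together with all $\vec y\in F_{n+1}$ satisfying $\vec y^i=\vec x^i$ and $y_i\notin\calE_i$.

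Condition (2) therefore says two things. First, $\vec x\notin Y_i$. Second, all but finitely many extensions $\vec y\in F_{n+1}$ of $\vec x^i$ lie outside $Y_i$. The second part is precisely ``$Y_i\cap F_{n+1}$ is finite in $i$'', which gives the forward direction.

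Next I show that condition (1) for $A_i$ follows from condition (2). Granting (2), $Y_i\cap\calD_{A_i}\subset F_{n+1}$, and $F_{n+1}$ is open (indeed discrete) in $\calD_{A_i}$, so $Y_i\cap\calD_{A_i}$ is open. Its complement in $\calD_{A_i}$ is $F_{A_i}\cup(F_{n+1}\setminus Y_i)$. This complement is open: points of $F_{n+1}$ are isolated, and each point of $F_{A_i}$ has the neighbourhood $\calN_{A_i}(\vec x,\calE_i)$ avoiding $Y_i$, again by (2).

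For the converse, start with a partition of $F_{n+1}$ whose pieces are finite in $i$, and extend it as follows. For each $i$, assign every point of $F_{A_i}$ to some $Y_j$ with $j\ne i$ (say $j=0$ if $i\ne 0$, and $j=1$ if $i=0$; this needs $n\ge1$). Assign the points of the lower facets $F_B$ with $|B|<n$ arbitrarily, since no condition constrains them. Now for $\vec x\in F_{A_i}$, take $\calE_i$ to be the finite set of $y_i$ such that the corresponding extension of $\vec x^i$ lies in $Y_i$. Then every point of $\calN_{A_i}(\vec x,\calE_i)$ lies in $\bigcup_{j\ne i}Y_j$, so condition (2) holds, and condition (1) follows from it as above.

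There is no serious obstacle. The only delicate points are fixing the correct reading of ``partition of $\prod\kappa_i$'' relative to Definition~\ref{def:fub}, and noticing that condition (1) is a consequence of condition (2) for these $A$. The latter is the step I would verify most carefully, since it relies on the specific shape of the basic neighbourhoods $\calN_A$.
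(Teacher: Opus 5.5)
Your proof is correct and follows the same route as the paper: both directions come from unwinding condition (2) at $A=(n+1)\setminus\{i\}$, with $\calE_i$ taken to be the finitely many $i$-coordinates of extensions lying in $Y_i$. Your extra steps fill in points the paper's proof leaves implicit: you check that $A=n+1$ is vacuous, you derive condition (1) from condition (2), and you extend the partition to the facets $F_{(n+1)\setminus\{i\}}$ so that they avoid $Y_i$.
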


\begin{proof}
  For the forward direction, suppose the $Y_i$ form an $(n-1)$-sheer partition, fix $i \leq n$, and let $\vec x \in \prod_{j \neq i} \kappa_j$. If there are no $\vec y$ in $Y_i$ extending $\vec x$, then we're done, so suppose some $\vec y \in Y_i$ extends $\vec x$. By condition (2), there is some finite set $\calE_i \subset \kappa_i$ such that $\calN_A(\vec y, \langle\calE_i\rangle) \subset \bigcup_{j \in A} Y_j$; the latter is equal to the complement of $Y_i$ since the $Y_j$ partition the product. This says exactly that if $\vec z$ extends $\vec x$ and $z_i \not \in \calE_i$, then $\vec z \not \in Y_i$; so if $\vec z \in Y_i$ extends $\vec x$, then $z_i$ lies in the finite set $\calE_i$.

  For the backward direction, suppose the $Y_j$ are each finite in $j$ and partition the product. Again let $i \leq n$ and let $A = (n+1) \setminus \{i\}$, $x \in F_A$. By hypothesis, there are finitely many $\vec y \in Y_i$ which agree with $\vec x$ on the coordinates in $A$; we let $\calE_i$ be the set of their $i$-coordinates. Then $\calN_A(\vec x, \langle \calE_i \rangle) \subset Y_i^c = \bigcup_{j \not \in A} Y_j$.
\end{proof}

We defer discussion of the (non-)existence of sheer partitions to the end of the section, focusing instead on their uses. 

\
We will actually use the following strengthening of condition (1), which uses condition (2):

\begin{lemma}\label{lem:upgrade}
  Let $\langle Y_i : i \leq n\rangle$ be a $k$-sheer partition of $\xkappa$.
  Then for any $A \in [n+1]^{\geq k+1}$ and any $i \not \in A$,
  $$
    \calD_A \cap Y_i = \calD_{A \cup \{i\}} \cap Y_i.
  $$
\end{lemma}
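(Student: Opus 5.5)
The inclusion $\calD_{A\cup\{i\}} \cap Y_i \subset \calD_A \cap Y_i$ is immediate, since $\calD_{A\cup\{i\}} \subset \calD_A$. For the reverse inclusion, I would argue via facets. We have $\calD_A = \bigcup_{A \subset B} F_B$ and $\calD_{A\cup\{i\}} = \bigcup_{A\cup\{i\}\subset B} F_B$. So it suffices to show that $Y_i$ is disjoint from every facet $F_B$ with $A \subset B$ and $i \notin B$.

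Fix such a $B$ and a point $\vec x \in F_B$. Since $|B| \geq |A| \geq k+1$, condition (2) of Definition \ref{def:fub} applies to $B$. It yields $\vec\calE \in \prod_{j\notin B}[\kappa_j]^{<\aleph_0}$ with
$$
  \calN_B(\vec x,\vec\calE) \subset \bigcup_{j\in B} Y_j.
$$
The point $\vec x$ lies in $\calN_B(\vec x,\vec\calE)$: its restriction to $B$ agrees with itself, and its coordinates outside $B$ are $\infty$, which lies in no $\calE_j$. Hence $\vec x \in \bigcup_{j\in B}Y_j$. Since the $Y_j$ partition $\xkappa^-$ and $i\notin B$, it follows that $\vec x\notin Y_i$. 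This is exactly the observation recorded in Remark \ref{rem:fub}, that $F_B$ is disjoint from $Y_i$ whenever $i\notin B$ and $|B|\ge k+1$.

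Combining the two steps, every point of $\calD_A\cap Y_i$ lies in some facet $F_B$ with $B \supset A$. By the previous step, that $B$ must contain $i$, so the point lies in $\calD_{A\cup\{i\}}\cap Y_i$. There is no real obstacle here. The only care needed is to check that condition (2) is applied to the larger set $B$ rather than to $A$, which is legitimate because $[n+1]^{\ge k+1}$ is upward closed. One should also check that the facets with $B \supset A$ really cover $\calD_A$, which is the displayed identity in Definition \ref{def:basis}.
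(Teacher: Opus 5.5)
Your proof is correct and follows essentially the same route as the paper. Both write $\calD_A$ as the union of the facets $F_B$ with $B \supset A$, observe that $Y_i$ misses each such facet with $i \notin B$, and conclude that only the facets making up $\calD_{A\cup\{i\}}$ contribute. The one difference is that you derive this disjointness directly from condition (2) applied to $B$, by checking that $\vec x \in \calN_B(\vec x,\vec\calE)$, whereas the paper simply cites Remark \ref{rem:fub} for it.
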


\begin{proof}
  Fix $A \in [n+1]^{\geq k+1}$ and $i \not \in A$. For any $B \supseteq A$ we have
  by Remark \ref{rem:fub} that $F_B$ and $Y_i$ are disjoint; that is,
  $$
    \coprod_{\substack{B \supset A \\ i \not \in B}} F_B \cap Y_i = \emptyset.
  $$
  Adding the $F_B$ for $B \supset A, i \in B$ to both sides, we get
  $$
    \coprod_{B \supset A} F_B \cap Y_i = \coprod_{\substack{B \supset A \\ i \in B}} F_B \cap Y_{i}.
  $$
  which becomes $\calD_A \cap Y_i = \calD_{A \cup \{i\}} \cap Y_i$ by the remark in Definition \ref{def:basis}.
\end{proof}

The purpose of Lemma \ref{lem:upgrade} is to ``upgrade'' a continuous function 
$\calD_{A \cup \{i\}}(\vec\kappa) \to \bbZ_2$ to a continuous function 
$\calD_A(\vec\kappa) \to \bbZ_2$, when restricted to the subspace
$\calD_{A \cup \{i\}} \cap Y_i$. This, in tandem with condition (1), enables piecewise definitions of cocycles using
functions which are not continuous on the whole of $\calD_A$.

As a first example of this, we construct a trivializer of a $k$-cocycle using a $k$-sheer partition:

\begin{proposition}\label{prop:trivialize}
  Let $\langle Y_i : i \leq n\rangle$ be a $k$-sheer partition of $\xkappa$, and suppose $\vec{f} \in \prod_{A \in [n+1]^{k+2}} \calC_A(\vec\kappa)$ is a tuple of continuous functions.
  For any $B \in [n+1]^{k+1}$, any $i \leq n$, and any $\vec x \in Y_i$, define
  $$
    g_B(\vec x) = 
    \begin{cases}
      f_{B \cup \{i\}}(\vec x) &\t{if } i \not \in B, \\
      0 &\t{otherwise.}
    \end{cases}
  $$
  Then $g_B \in \calC_B(\vec\kappa)$. Moreover, if $\vec f$ is a cocycle, then
  $d\vec g = \vec f$.
\end{proposition}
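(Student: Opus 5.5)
Two things need proving: continuity of each $g_B$ on $\calD_B$, and the identity $d\vec g = \vec f$ when $\vec f$ is a cocycle. Note first that $g_B$ is defined on all of $\calD_B$. For $\vec x \in \calD_B \cap Y_i$ with $i \notin B$, Lemma \ref{lem:upgrade} applies (since $|B| = k+1$) and gives $\vec x \in \calD_{B\cup\{i\}}$. Hence $f_{B\cup\{i\}}(\vec x)$ makes sense, and $g_B$ is well defined.

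For continuity I use condition (1) of Definition \ref{def:fub}, which says that each $Y_i$ with $i \notin B$ is clopen in $\calD_B$. The union $\bigcup_{i\in B} Y_i \cap \calD_B$ is the complement of finitely many of these, so it is clopen too. It therefore suffices to check continuity on each piece separately.
\begin{itemize}
  \item On $\bigcup_{i \in B} Y_i \cap \calD_B$ the function $g_B$ is zero, hence continuous.
  \item On $Y_i \cap \calD_B$ with $i \notin B$, Lemma \ref{lem:upgrade} identifies this set with $Y_i \cap \calD_{B\cup\{i\}}$, a subspace of $\calD_{B \cup\{i\}}$. There $g_B$ is the restriction of the continuous $f_{B\cup\{i\}}$.
\end{itemize}
One point needs care: the subspace topologies that $Y_i\cap\calD_B$ inherits from $\calD_B$ and from $\calD_{B\cup\{i\}}$ agree, because $\calD_{B\cup\{i\}}\subset\calD_B$ is a subspace. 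So the restriction is continuous on a clopen piece of $\calD_B$, and gluing finitely many clopen pieces gives $g_B\in\calC_B$.

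For the coboundary identity, fix $A \in [n+1]^{k+2}$ and $\vec x \in \calD_A$, and let $i$ be the unique index with $\vec x \in Y_i$. Then $(d\vec g)_A(\vec x) = \sum_{j\in A} g_{A\setminus\{j\}}(\vec x)$, and I split into two cases.
\begin{itemize}
  \item If $i \notin A$, then $i\notin A\setminus\{j\}$ for every $j$, so $(d\vec g)_A(\vec x)=\sum_{j\in A} f_{(A\cup\{i\})\setminus\{j\}}(\vec x)$. Here Lemma \ref{lem:upgrade} (applied to $A$, of size $k+2\ge k+1$) gives $\vec x\in \calD_{A\cup\{i\}}$. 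The cocycle condition on $A' = A\cup\{i\}$ reads $\sum_{j \in A'} f_{A'\setminus\{j\}}(\vec x)=0$, which says exactly that this sum equals $f_A(\vec x)$.
  \item If $i\in A$, then $g_{A\setminus\{j\}}(\vec x)=0$ for $j\ne i$, since then $i \in A\setminus\{j\}$. The remaining term is $g_{A\setminus\{i\}}(\vec x)=f_{A}(\vec x)$.
\end{itemize}
In both cases $(d\vec g)_A(\vec x)=f_A(\vec x)$, so $d\vec g = \vec f$.

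The main obstacle is the case $i \notin A$, specifically the requirement that $\vec x$ lie in $\calD_{A\cup\{i\}}$, where all the functions in the cocycle relation are defined. This is exactly what Lemma \ref{lem:upgrade} supplies, so the size hypothesis $|A|\ge k+1$ in Definition \ref{def:fub} is what makes the argument go through. The other delicate point is the topology comparison in the continuity step, handled above.
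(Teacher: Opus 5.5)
Your proof is correct and follows essentially the same route as the paper: you check continuity piecewise on the clopen pieces $\calD_B \cap Y_i$ using condition (1) and Lemma~\ref{lem:upgrade}, then split on whether the piece index $i$ lies in $A$, invoking the cocycle relation on $A \cup \{i\}$ when it does not. Your explicit checks that $\vec x \in \calD_{A\cup\{i\}}$ and that the two subspace topologies on $Y_i \cap \calD_B$ agree fill in details the paper leaves implicit.
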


\begin{proof}
  As remarked above, we need only check the continuity of $g_B$ restricted to each $\calD_B \cap Y_i$. Continuity of $g_B$ on $\calD_B \cap Y_i$ where $i \not \in B$ follows from the continuity of $f_{B \cup \{i\}}$ on 
  $\calD_{B \cup \{i\}} \cap Y_i$ and Lemma \ref{lem:upgrade}. If $i \in B$ then $g_B$ is constant on $\calD_B \cap Y_i$
  and hence continuous.

  To see that $d\vec g = \vec f$, we first note that for any $A \in [n+1]^{k+2}$ we have
  $$
    (d\vec g)_{A} = \sum_{i \in A} g_{A \setminus \{i\}}.
  $$
  Fix $j \leq n$ and $\vec x \in Y_j$. If $j \in A$ then 
  $g_{A \setminus \{i\}}(\vec x) = 0$ unless $i = j$, in which case it is 
  $f_A(\vec x)$. If $j \not \in A$ then the sum is 
  $\sum_{i \in A} f_{A \setminus \{i\} \cup \{j\}}(\vec x)$ which is equal to 
  $f_A(\vec x)$ by the cocycle property of $\vec f$.
  Thus $d\vec g = \vec f$ as desired.
\end{proof}

\begin{corollary}\label{cor:fub-vanish}
  If there is a $k$-sheer partition of $\xkappa^-$ then $H^{\ell}(\xkappa^-,\underline{\bbZ_2}) = 0$ for all $\ell \geq k$.
\end{corollary}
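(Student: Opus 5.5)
The plan is to reduce everything to Proposition \ref{prop:trivialize}, using the monotonicity of sheerness recorded in Remark \ref{rem:fub}. Fix a $k$-sheer partition $\langle Y_i : i \leq n\rangle$ of $\xkappa^-$ and an $\ell \geq k$. By the third bullet of Remark \ref{rem:fub}, the same partition is $\ell$-sheer, since conditions (1) and (2) of Definition \ref{def:fub} only become weaker as the lower bound $\ell+1$ on $|A|$ increases. So it suffices to prove the vanishing statement in the single degree attached to the sheerness parameter, and then apply it uniformly for every $\ell \geq k$ with the same partition.

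For that single degree, take a cocycle $\vec f = \langle f_A \rangle$ whose components are indexed by the sets $A$ that Proposition \ref{prop:trivialize} handles for the parameter $\ell$, namely $A \in [n+1]^{\ell+2}$. Define the cochain $\vec g = \langle g_B : B \in [n+1]^{\ell+1}\rangle$ piecewise on the pieces $Y_i$, exactly as in that proposition:
\begin{itemize}
  \item $g_B = f_{B \cup \{i\}}$ on $Y_i$ when $i \notin B$;
  \item $g_B = 0$ on $Y_i$ when $i \in B$.
\end{itemize}
Proposition \ref{prop:trivialize} then gives $g_B \in \calC_B(\vec\kappa)$ and $d\vec g = \vec f$, so the class of $\vec f$ is zero. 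In checking that the hypotheses are met, the only inputs used are the following.
\begin{itemize}
  \item Condition (1) lets continuity of $g_B$ be checked piece by piece on each $\calD_B \cap Y_i$.
  \item Lemma \ref{lem:upgrade} identifies $\calD_B \cap Y_i$ with $\calD_{B\cup\{i\}} \cap Y_i$, so that $f_{B\cup\{i\}}$ is actually defined and continuous there.
\end{itemize}
Both are available because $|B| = \ell+1 \geq k+1$.

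The degree bookkeeping at the extremes is then routine. The \v{C}ech complex of Proposition \ref{prop:combchar} has no terms beyond $[n+1]^{n+1}$. Hence for the largest values of $\ell$ the relevant cochain groups are zero and vanishing is automatic. For all intermediate $\ell \geq k$, the argument above applies verbatim, since the partition is $\ell$-sheer.

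I expect the main obstacle to be the continuity step, not the algebra. The identity $d\vec g = \vec f$ is a pointwise computation on each $Y_j$: the terms either collapse to the single summand $i=j$ when $j \in A$, or reduce to the cocycle relation $(d\vec f)_{A\cup\{j\}} = 0$ when $j \notin A$. The delicate point is that $f_{B\cup\{i\}}$ is a priori continuous only on the smaller set $\calD_{B\cup\{i\}}$. One must therefore verify that condition (2) really forces $Y_i$ to avoid every facet $F_C$ with $C \supseteq B$ and $i \notin C$, which is precisely the content of Lemma \ref{lem:upgrade}. I would first check that this lemma is applied only to sets $B$ of size at least $k+1$, which is guaranteed because $\ell \geq k$.
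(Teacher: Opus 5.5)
Your strategy is the intended one: monotonicity of sheerness from Remark~\ref{rem:fub}, plus the explicit trivializer of Proposition~\ref{prop:trivialize}. However, your degrees are off by one, and you do not notice it. After fixing $\ell \geq k$, the cocycle you trivialize is indexed by $[n+1]^{\ell+2}$ and the trivializer by $[n+1]^{\ell+1}$. So what you actually show is that every $(\ell+1)$-cocycle is a coboundary. Running over all $\ell \geq k$, your argument proves $H^{m}(\xkappa^-,\underline{\bbZ_2}) = 0$ only for $m \geq k+1$. The degree $m = k$, which the statement asserts, is never touched. Your paragraph on ``bookkeeping at the extremes'' is a symptom of this. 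At $\ell = n$ the group your argument kills is $H^{n+1}$, which vanishes for trivial reasons. The statement at $\ell = n$, by contrast, asserts $H^n = 0$, which is the top \v{C}ech degree and not automatic at all.

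The missing case cannot be supplied, because the statement is false at $\ell = k$. By Remark~\ref{rem:fub}, every partition of $\xkappa^-$ is $n$-sheer. So $\ell = k = n$ would give $H^n(\xkappa^-,\underline{\bbZ_2}) = 0$ for every $\vec\kappa$, contradicting Aoki's theorem that $H^n(X(\aleph_0,\aleph_1,\ldots,\aleph_n)^-,\underline{\bbZ_2}) \neq 0$. Similarly, $X(\aleph_0,\aleph_0,\aleph_0)^-$ admits a $0$-sheer partition by Proposition~\ref{prop:fub-iff-zero}, yet its $H^0$ contains the nonzero constant section. The correct conclusion is vanishing for all $\ell > k$; its case $\ell = k+1$ is Theorem~\ref{thm:main2}(1). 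That is also all the paper later uses: in Proposition~\ref{prop:fub-iff-zero} to get $H^{k+1}=0$ from a $k$-sheer partition, and in the converse to Aoki's theorem to get $H^n = 0$ from the $(n-1)$-sheer partition of Proposition~\ref{prop:aoki-converse-fub}. With the range corrected to $\ell > k$, your proof is complete and coincides with the argument the paper leaves implicit.
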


Figure \ref{fig:chart1} shows a table view of the piecewise definition of the $g_B$ for 
the case $k=1, n=3$.

\begin{figure}[htbp]
  \centering
  \begin{tabular}{ c|c|c|c|c } 
             & $Y_0$     & $Y_1$     & $Y_2$     & $Y_3$     \\ \hline
    $g_{01}$ & $0$       & $0$       & $f_{012}$ & $f_{013}$ \\ 
    $g_{02}$ & $0$       & $f_{012}$ & $0$       & $f_{023}$ \\ 
    $g_{03}$ & $0$       & $f_{013}$ & $f_{023}$ & $0$       \\
    $g_{12}$ & $f_{012}$ & $0$       & $0$       & $f_{123}$ \\
    $g_{13}$ & $f_{013}$ & $0$       & $f_{123}$ & $0$       \\
    $g_{23}$ & $f_{023}$ & $f_{123}$ & $0$       & $0$       \\
   \end{tabular}
  \caption{Given a 2-cocycle $\vec f = \langle f_{012}, f_{013}, f_{023}, f_{123} \rangle$ and a $k$-sheer partition $\langle Y_0,Y_1,Y_2,Y_3 \rangle$, the above piecewise definition of $\langle g_B : B \in [4]^2\rangle$ defines a trivializer of $\vec f$. (For brevity the columns are labeled $Y_i$ rather than $\calD_A \cap Y_i$.)}
  \label{fig:chart1}
\end{figure}

We next use a $k$-sheer partition of $\xkappa^-$ to construct an embedding of the group $H^k(\xlambda^-,\underline{\bbZ_2})$ 
into $H^k(\xkappa^-,\underline{\bbZ_2})$ when 
$\vec{\lambda} \leq \vec\kappa$ coordinate-wise. A priori, one might try to construct an embedding by treating large enough coordinates as infinity and mapping unrestricted values to zero, as follows:
\begin{definition}\label{def:naive-ext}
  For $\vec{\lambda} \leq \vec\kappa$ coordinate-wise and a function $f_A \in \calC_A(\vec\lambda)$, define the \textit{naive extension} of $f_A$ by
  $$
    f_A^\infty(\vec x) = 
    \begin{cases}
      f_A(\vec x), &\t{ if all } x_i \in \lambda_i, \\
      f_A(\vec x^\infty), &\t{ if } x_i \not \in \lambda_i \t{ only for } i \not \in A, \\
      0,  &\t{ otherwise,}
    \end{cases}
  $$
  where $\vec x^\infty$ is obtained from $\vec x$ by setting all $x_i \geq \lambda_i$ equal to $\infty$.

  This extends to a map on cochains $\vec f \mapsto \vec f^\infty$ by acting on the individual functions.
\end{definition}

Figure \ref{fig:naive} shows an example of the naive extension of a cochain from $\xlambda^-$ to $\xkappa^-$ for $\vec\lambda \leq \vec\kappa$.

\begin{figure}[htbp]
  \centering
  \tdplotsetmaincoords{70}{110}
  \begin{tikzpicture}[scale=2,tdplot_main_coords]
    \coordinate (O) at (0,0,0);
    \tdplotsetcoord{P}{1.5}{55}{45}

    \fill [yellow,opacity=0.5](Pxy) -- (Py) -- (Pyz)  -- (P);

    \draw[thick,->] (O) -- (0,0,1) node[anchor=south,xshift=7pt]{$0 \scriptstyle{(\aleph_0)}$};
    \draw[thick,->] (O) -- (1.5,0,0) node[anchor=north east]{$1 \scriptstyle{(\aleph_0)}$};
    \draw[thick,->] (O) -- (0,1,0) node[anchor=north west]{$2 \scriptstyle{(\aleph_0)}$};
    \draw[thick] (O) -- (Px);
    \draw[thick] (O) -- (Py);
    \draw[thick] (O) -- (Pz);
    \draw[thick] (Px) -- (Pxy) -- (Py);
    \draw[thick] (Px) -- (Pxz) -- (Pz);
    \draw[thick] (Py) -- (Pyz) -- (Pz);
    \draw[thick] (Pxy) -- (P);
    \draw[thick] (Pyz) -- (P);
    \draw[thick] (Pxz) -- (P);

    \filldraw[fill=white] (P) circle (1pt);

  \end{tikzpicture}
  \begin{tikzpicture}[scale=2,tdplot_main_coords]
    \coordinate (O) at (0,0,0);
    \tdplotsetcoord{P}{2}{65}{65}

    \fill [yellow,opacity=0.5](Pxy) -- (Py) -- (Pyz)  -- (P);

    \draw[color=blue,thick,->] (0.2,0.75,0.75) -- (0.2,1.65,0.75);
    \draw[color=blue,thick,->] (0.4,0.75,0.75) -- (0.4,1.65,0.75);
    \draw[color=blue,thick,->] (0.6,0.75,0.75) -- (0.6,1.65,0.75);
    \draw[color=blue,thick,->] (0.8,0.75,0.75) -- (0.8,1.65,0.75);
    \draw[color=blue,thick,->] (0.2,0.75,0.45) -- (0.2,1.65,0.45);
    \draw[color=blue,thick,->] (0.4,0.75,0.45) -- (0.4,1.65,0.45);
    \draw[color=blue,thick,->] (0.6,0.75,0.45) -- (0.6,1.65,0.45);
    \draw[color=blue,thick,->] (0.8,0.75,0.45) -- (0.8,1.65,0.45);
    \draw[color=blue,thick,->] (0.2,0.75,0.15) -- (0.2,1.65,0.15);
    \draw[color=blue,thick,->] (0.4,0.75,0.15) -- (0.4,1.65,0.15);
    \draw[color=blue,thick,->] (0.6,0.75,0.15) -- (0.6,1.65,0.15);
    \draw[color=blue,thick,->] (0.8,0.75,0.15) -- (0.8,1.65,0.15);

    \draw[thick,->] (O) -- (0,0,1) node[anchor=south,xshift=7pt]{$0 \scriptstyle{(\aleph_0)}$};
    \draw[thick,->] (O) -- (1.5,0,0) node[anchor=north east]{$1 \scriptstyle{(\aleph_0)}$};
    \draw[thick,->] (O) -- (0,2,0) node[anchor=north west]{$2 \scriptstyle{(\aleph_1)}$};
    \draw[thick] (O) -- (Px);
    \draw[thick] (O) -- (Py);
    \draw[thick] (O) -- (Pz);
    \draw[thick] (Px) -- (Pxy) -- (Py);
    \draw[thick] (Px) -- (Pxz) -- (Pz);
    \draw[thick] (Py) -- (Pyz) -- (Pz);
    \draw[thick] (Pxy) -- (P);
    \draw[thick] (Pyz) -- (P);
    \draw[thick] (Pxz) -- (P);

    \filldraw[fill=white] (P) circle (1pt);

    \draw[thick] (0,.75,-0.1) -- (0,.75,0.1) node[anchor=north east,scale=0.5,yshift=5pt] {$\aleph_0$};
    \draw[dashed] (0,.75,0) -- (0,.75,.85) -- (0.8,0.75,0.85) -- (0.8,0.75,0) -- (0,0.75,0) ;
  \end{tikzpicture}
  \begin{tikzpicture}[scale=2,tdplot_main_coords]
    \coordinate (O) at (0,0,0);
    \tdplotsetcoord{P}{1.5}{55}{45}
  
    \fill [yellow,opacity=0.5](Pxy) -- (Px) -- (Pxz)  -- (P);
  
    \draw[thick,->] (O) -- (0,0,1) node[anchor=south,xshift=7pt]{$0 \scriptstyle{(\aleph_0)}$};
    \draw[thick,->] (O) -- (1.5,0,0) node[anchor=north east]{$1 \scriptstyle{(\aleph_0)}$};
    \draw[thick,->] (O) -- (0,1,0) node[anchor=north west]{$2 \scriptstyle{(\aleph_0)}$};
    \draw[thick] (O) -- (Px);
    \draw[thick] (O) -- (Py);
    \draw[thick] (O) -- (Pz);
    \draw[thick] (Px) -- (Pxy) -- (Py);
    \draw[thick] (Px) -- (Pxz) -- (Pz);
    \draw[thick] (Py) -- (Pyz) -- (Pz);
    \draw[thick] (Pxy) -- (P);
    \draw[thick] (Pyz) -- (P);
    \draw[thick] (Pxz) -- (P);
  
    \filldraw[fill=white] (P) circle (1pt);
  
    \end{tikzpicture}
  \begin{tikzpicture}[scale=2,tdplot_main_coords]
    \coordinate (O) at (0,0,0);
    \tdplotsetcoord{P}{2}{65}{65}
  
    \fill [yellow,opacity=0.5](Px) -- ($(Px) + (0,0.75,0)$) -- ($(Pxz) + (0,0.75,0)$)  -- (Pxz);
    \fill [cyan,opacity=0.5]($(Px) + (0,0.75,0)$) -- (Pxy) -- (P)  -- ($(Pxz) + (0,0.75,0)$);

    \draw[thick,->] (O) -- (0,0,1) node[anchor=south,xshift=7pt]{$0 \scriptstyle{(\aleph_0)}$};
    \draw[thick,->] (O) -- (1.5,0,0) node[anchor=north east]{$1 \scriptstyle{(\aleph_0)}$};
    \draw[thick,->] (O) -- (0,2,0) node[anchor=north west]{$2 \scriptstyle{(\aleph_1)}$};
    \draw[thick] (O) -- (Px);
    \draw[thick] (O) -- (Py);
    \draw[thick] (O) -- (Pz);
    \draw[thick] (Px) -- (Pxy) -- (Py);
    \draw[thick] (Px) -- (Pxz) -- (Pz);
    \draw[thick] (Py) -- (Pyz) -- (Pz);
    \draw[thick] (Pxy) -- (P);
    \draw[thick] (Pyz) -- (P);
    \draw[thick] (Pxz) -- (P);
  
    \filldraw[fill=white] (P) circle (1pt);

    \draw[thick] (0,.75,-0.1) -- (0,.75,0.1) node[anchor=north east,scale=0.5,yshift=5pt] {$\aleph_0$};
    \draw[dashed] (0,.75,0) -- (0,.75,.85) -- (0.8,0.75,0.85) -- (0.8,0.75,0) -- (0,0.75,0) ;

    \node at (0,1,0.3) {\large{0}};
    \end{tikzpicture}
  \begin{tikzpicture}[scale=2,tdplot_main_coords]
    \coordinate (O) at (0,0,0);
    \tdplotsetcoord{P}{1.5}{55}{45}

    \fill [yellow,opacity=0.5](Pyz) -- (Pz) -- (Pxz)  -- (P);

    \draw[thick,->] (O) -- (0,0,1) node[anchor=south,xshift=7pt]{$0 \scriptstyle{(\aleph_0)}$};
    \draw[thick,->] (O) -- (1.5,0,0) node[anchor=north east]{$1 \scriptstyle{(\aleph_0)}$};
    \draw[thick,->] (O) -- (0,1,0) node[anchor=north west]{$2 \scriptstyle{(\aleph_0)}$};
    \draw[thick] (O) -- (Px);
    \draw[thick] (O) -- (Py);
    \draw[thick] (O) -- (Pz);
    \draw[thick] (Px) -- (Pxy) -- (Py);
    \draw[thick] (Px) -- (Pxz) -- (Pz);
    \draw[thick] (Py) -- (Pyz) -- (Pz);
    \draw[thick] (Pxy) -- (P);
    \draw[thick] (Pyz) -- (P);
    \draw[thick] (Pxz) -- (P);

    \filldraw[fill=white] (P) circle (1pt);

    \end{tikzpicture}
  \begin{tikzpicture}[scale=2,tdplot_main_coords]
    \coordinate (O) at (0,0,0);
    \tdplotsetcoord{P}{2}{65}{65}
  
    \fill [yellow,opacity=0.5](Pz) -- ($(Pxz)$) -- ($(Pxz) + (0,0.75,0)$)  -- ($(Pz) + (0,0.75,0)$);
    \fill [cyan,opacity=0.5]($(Pz) + (0,0.75,0)$) -- (Pyz) -- (P)  -- ($(Pxz) + (0,0.75,0)$);
  
    \draw[thick,->] (O) -- (0,0,1) node[anchor=south,xshift=7pt]{$0 \scriptstyle{(\aleph_0)}$};
    \draw[thick,->] (O) -- (1.5,0,0) node[anchor=north east]{$1 \scriptstyle{(\aleph_0)}$};
    \draw[thick,->] (O) -- (0,2,0) node[anchor=north west]{$2 \scriptstyle{(\aleph_1)}$};
    \draw[thick] (O) -- (Px);
    \draw[thick] (O) -- (Py);
    \draw[thick] (O) -- (Pz);
    \draw[thick] (Px) -- (Pxy) -- (Py);
    \draw[thick] (Px) -- (Pxz) -- (Pz);
    \draw[thick] (Py) -- (Pyz) -- (Pz);
    \draw[thick] (Pxy) -- (P);
    \draw[thick] (Pyz) -- (P);
    \draw[thick] (Pxz) -- (P);
  
    \filldraw[fill=white] (P) circle (1pt);

    \draw[thick] (0,.75,-0.1) -- (0,.75,0.1) node[anchor=north east,scale=0.5,yshift=5pt] {$\aleph_0$};
    \draw[dashed] (0,.75,0) -- (0,.75,.85) -- (0.8,0.75,0.85) -- (0.8,0.75,0) -- (0,0.75,0) ;

    \node at (0,1,0.3) {\large{0}};
  \end{tikzpicture}

  \caption{The naive extension of a 1-cochain $\langle f_{01},f_{02},f_{12} \rangle$ defined on $X(\aleph_0,\aleph_0,\aleph_0)^-$ (left) to $\langle f_{01}^\infty,f_{02}^\infty,f_{12}^\infty \rangle$ defined on $X(\aleph_0,\aleph_0,\aleph_1)^-$ (right). Notice that the latter is not necessarily a cocycle.}
  \label{fig:naive}
\end{figure}

\
This definition indeed maps cochains to cochains, and in the case $k = n$ it maps cocycles to cocycles. Since a restriction of a trivializer for $\vec f^\infty$ to $X(\lambda)^-$ yields a trivializer for $\vec f$, it follows that for top cohomology
$$
  H^n(\xlambda^-,\underline{\bbZ_2}) \inj H^n(\xkappa^-,\underline{\bbZ_2})
  \quad \t{whenever} \quad
  \vec \lambda \leq \vec \kappa.
$$
However, the naive extension fails to generate embeddings in the case of intermediate cohomology:

\begin{example}
  Let $\vec\lambda = \langle \aleph_0,\aleph_0,\aleph_0 \rangle, \vec\kappa = \langle \aleph_0,\aleph_0,\aleph_1\rangle,$ and let $\vec f \in H^1(\xlambda^-,\underline{\bbZ_2})$ be defined by 
  \begin{align*}
    f_{01}(\vec x) &= f_{02}(\vec x) = 1, \\
    f_{12}(\vec x) &= 0. \\
  \end{align*}
  The naive extension maps $f_{01}$ and $f_{12}$ to the constant-1 function and constant-0 function, respectively, on $\xkappa$, but $f_{02}^\infty(\vec x) = 0$ when $x_2 > \aleph_0$. Thus the sum $f_{01}^\infty + f_{02}^\infty + f_{12}^\infty$ is not zero, so $\vec f^\infty$ is not a cocycle.
\end{example}

Thus in order to construct an embedding, we will modify the cochain $\vec f^\infty$ to a cocycle. In fact we will show that, given the existence of a $k$-sheer partition, any $k$-cochain can be modified to a $k$-cocycle without affecting its values at any limit points.

\begin{definition}\label{def:modification}
  Let $\langle Y_i : i \leq n\rangle$ be a $k$-sheer partition of $\xkappa^-$, and let $\vec{f} \in C^k(\xkappa^-,\underline{\bbZ_2})$ be a $k$-cochain. Define the \textit{modification} of $\vec{f}$ as follows: for any $A \in [n+1]^{k+1}$ and any $\vec x \in \calD_A \cap Y_i$, define
  $$
    f^{\Mod}_A(\vec x) = 
    \begin{cases}
      f_A(\vec x) &\t{if } i \in A, \\
      \sum_{j \in A} f_{A \setminus \{j\} \cup \{i\}}(\vec x) &\t{if } i \not \in A.
    \end{cases}
  $$
\end{definition}
\pagebreak
\begin{lemma}\label{lem:mod-cts}
  For any cochain $\vec f$, the modified tuple $\vec f^{\Mod}$ is a cocycle. Moreover, for any $A \in [n+1]^{k+1}$ and any limit point $\vec x \in \calD_A$, $f_A(\vec x) = f_A^{\Mod}(\vec x)$.
\end{lemma}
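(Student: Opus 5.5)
The plan is to verify three things in turn: that each $f^{\Mod}_A$ is continuous on $\calD_A$, so that $\vec f^{\Mod}$ is a $k$-cochain; that $d\vec f^{\Mod}=0$; and that $f^{\Mod}_A$ agrees with $f_A$ at the relevant limit points. In each step I work piece by piece on the sets $\calD_A\cap Y_i$, as suggested by Remark~\ref{rem:fub}.

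\emph{Continuity.} The first task is to check that each $\calD_A\cap Y_i$ is clopen in $\calD_A$. For $i\notin A$ this is condition (1) of Definition~\ref{def:fub}, which applies because $|A|=k+1$. For $i\in A$, the set $\calD_A\cap Y_i$ is the complement in $\calD_A$ of the finite union $\bigcup_{l\neq i}(\calD_A\cap Y_l)$. Every index $l\in A\setminus\{i\}$ also has its piece clopen by condition (1), so the complement is again clopen. It then suffices to check continuity of $f^{\Mod}_A$ on each piece separately.
\begin{itemize}
\item If $i\in A$, then $f^{\Mod}_A=f_A$ on the piece, which is continuous.
\item If $i\notin A$, Lemma~\ref{lem:upgrade} gives $\calD_A\cap Y_i=\calD_{A\cup\{i\}}\cap Y_i$. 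For each $j\in A$ we have $\calD_{A\cup\{i\}}\subset\calD_{(A\setminus\{j\})\cup\{i\}}$, so every summand $f_{(A\setminus\{j\})\cup\{i\}}$ is defined and continuous on this piece. This mirrors the continuity argument in Proposition~\ref{prop:trivialize}.
\end{itemize}

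\emph{Cocycle identity.} Fix $A'\in[n+1]^{k+2}$ and a point $\vec x\in\calD_{A'}\cap Y_i$, and expand $(d\vec f^{\Mod})_{A'}(\vec x)=\sum_{j\in A'}f^{\Mod}_{A'\setminus\{j\}}(\vec x)$.
\begin{itemize}
\item Suppose $i\in A'$. For $j\neq i$ the term is $f_{A'\setminus\{j\}}(\vec x)$, since $i\in A'\setminus\{j\}$. The term $j=i$ equals $\sum_{l\in A'\setminus\{i\}}f_{A'\setminus\{l\}}(\vec x)$, since $(A'\setminus\{i,l\})\cup\{i\}=A'\setminus\{l\}$. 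So each $f_{A'\setminus\{l\}}$ with $l\neq i$ appears exactly twice, and the sum vanishes mod $2$.
\item Suppose $i\notin A'$. The sum becomes $\sum_{j\in A'}\sum_{l\in A'\setminus\{j\}}f_{(A'\setminus\{j,l\})\cup\{i\}}(\vec x)$. Each unordered pair $\{j,l\}$ contributes the same function twice, so again the sum is $0$.
\end{itemize}
This is pure bookkeeping, with the same shape as the computation in Proposition~\ref{prop:trivialize}.

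\emph{Agreement at limit points.} This is the step I expect to be the main obstacle, because it depends on what ``limit point'' is taken to mean. The clean case is $\vec x\in F_A$, the points of $\calD_A$ whose coordinates outside $A$ are all $\infty$. By Remark~\ref{rem:fub}, $F_A$ is disjoint from $Y_i$ for every $i\notin A$, since $|A|\geq k+1$. Hence $\vec x\in Y_i$ for some $i\in A$, and $f^{\Mod}_A(\vec x)=f_A(\vec x)$ by definition.

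For a general non-isolated point $\vec x\in F_B$ with $A\subsetneq B\neq n+1$, that argument only places $\vec x$ in some $Y_i$ with $i\in B$. The case $i\in B\setminus A$ does not obviously give agreement: there the difference is $f_A(\vec x)+f^{\Mod}_A(\vec x)=(d\vec f)_{A\cup\{i\}}(\vec x)$, which need not vanish for an arbitrary cochain. So I would first try to confirm that the intended reading is the $F_A$ case. That is the one used for naive extensions, which are unchanged along the ``boundary'' facet. If the broader reading is intended, I would try to exploit condition (2) of Definition~\ref{def:fub} for $B$, together with the specific form of $\vec f^\infty$, to show that the relevant coboundary term vanishes near such points.
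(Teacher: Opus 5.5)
Your continuity and cocycle steps are essentially the paper's: you work piecewise on the $\calD_A\cap Y_i$, use Lemma~\ref{lem:upgrade}, and cancel terms in pairs mod $2$. Your application of Lemma~\ref{lem:upgrade} to $A$ itself, giving $\calD_A\cap Y_i\subset\calD_{A\cup\{i\}}\subset\calD_{(A\setminus\{j\})\cup\{i\}}$, is the correct form. The paper's text instead cites it for $A\setminus\{j\}$, which has only $k$ elements and so falls outside the lemma's hypothesis.

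There is one slip. For $i\in A$ you say each $\calD_A\cap Y_l$ with $l\in A\setminus\{i\}$ is clopen ``by condition (1)''. But condition (1) only concerns indices outside $A$, and these pieces need not be clopen. For example, with $n=2$ there are $1$-sheer partitions in which points of $Y_1$ accumulate at a point of $F_{\{0,1\}}\cap Y_0$. You don't need them. The set $\calD_A\cap\bigcup_{l\in A}Y_l$ is clopen, being the complement of $\bigcup_{l\notin A}(\calD_A\cap Y_l)$, and $f^{\Mod}_A=f_A$ on it.

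On the limit-point clause your suspicion is correct, and this is exactly where the paper's proof breaks. The paper takes $\vec x\in F_B$ with $A\subset B\subsetneq n+1$ and asserts that condition (2) gives a neighborhood of $\vec x$ inside $\bigcup_{i\in A}Y_i$. But condition (2) for $B$ only gives $\bigcup_{i\in B}Y_i$. The argument is therefore valid only when $B=A$, which is automatic if $k\geq n-1$. In that case it even yields agreement on a whole neighborhood of $\vec x$, slightly more than your pointwise statement.

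For $k\leq n-2$ the broad reading is false. Take $n=2$, $k=0$, $\vec\kappa=\langle\aleph_0,\aleph_0,\aleph_0\rangle$, with the $0$-sheer partition $Y_i=\{\vec x:\min\vec x=x_i\}$ of Proposition~\ref{prop:fub-iff-zero}. Take the $0$-cochain $f_{\{0\}}\equiv f_{\{2\}}\equiv0$, $f_{\{1\}}\equiv1$. The point $\vec x=(1,0,\infty)$ is a limit point of $\calD_{\{0\}}$ lying in $Y_1$. So $f^{\Mod}_{\{0\}}(\vec x)=f_{\{1\}}(\vec x)=1\neq0=f_{\{0\}}(\vec x)$, exactly as your formula predicts via $(d\vec f)_{\{0,1\}}(\vec x)=1$. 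Hence no use of condition (2) can rescue the broad claim for arbitrary cochains; the $F_A$ statement you proved is the right one, so drop the fallback plan.

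Your identity $f^{\Mod}_A=f_A+(d\vec f)_{A\cup\{i\}}$ on $\calD_A\cap Y_i$, $i\notin A$, is also what actually makes Proposition~\ref{prop:embedding} work. The relevant vanishing is on $\xlambda^-$, not near arbitrary limit points. If $\vec f$ is a cocycle on $\xlambda^-$, then $\vec f^\infty$ restricts to $\vec f$ on $\xlambda^-$, so $d\vec f^\infty$ vanishes there. Consequently $\vec f^{\infty,\Mod}$ restricts to exactly $\vec f$ on $\xlambda^-$. That is precisely what the restriction-of-trivializer argument needs, with no appeal to limit points.
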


\begin{proof}

  We first show that each $f^{\Mod}_A$ is continuous; by Remark \ref{rem:fub} it suffices to show $f^{\Mod}_A$ is continuous on each $\calD_A \cap Y_i$. So let $\vec x \in \calD_A \cap Y_i$:
  \begin{itemize}
    \item If $i \in A$, let $U$ be a neighborhood of $\vec x$ in $Y_i$ such that $f_A$ is constant on $U$. Then $f^{\Mod}_A$ is constant on $U \cap Y_i$ and hence continuous at $\vec x$.
    \item If $i \not \in A$, then by Lemma \ref{lem:upgrade} we have for each $j \in A$
    $$
      \calD_{A \setminus \{j\}} \cap Y_i = \calD_{A \setminus \{j\} \cup \{i\}} \cap Y_i.
    $$
    \pagebreak\clearpage
    \noindent Since $f_{A \setminus \{j\} \cup \{i\}}$ is continuous on $\calD_{A \setminus \{j\} \cup \{i\}} \cap Y_i$, there is a neighborhood $U_j \subset \calD_{A \setminus \{j\}} \cap Y_i$ such that $f_{A \setminus \{j\} \cup \{i\}}$ is constant on $U_j$. Let $U = \calD_A \cap \bigcap_{j \in A} U_j$; then in $U$ we have that each $f_{A \setminus \{j\} \cup \{i\}}$ is constant, so that
    $$
      g_A= \sum_{j \in A} f_{A \setminus \{j\} \cup \{i\}}
    $$
    is constant on $U$.
  \end{itemize}

  So $\vec f^{\Mod}$ is a cochain. To check the cocycle condition, assume $k \leq n-1$ (if $k = n$ then the cocycle condition is automatic) and choose $B \in [n+1]^{k+2}$. Again take $\vec x \in Y_i$ and case split on whether $i \in B$:
  \begin{itemize}
    \item If $i \in B$, then
    \begin{align*}
      \sum_{j \in B} f^{\Mod}_{B \setminus \{j\}}(\vec x) &= f^{\Mod}_{B \setminus \{i\}}(\vec x) + \sum_{j \in B \setminus \{i\}} f^{\Mod}_{B \setminus \{j\}}(\vec x) \\
      &= \sum_{j \in B \setminus \{i\}} f_{B \setminus \{j\}}(\vec x) + \sum_{j \in B \setminus \{i\}} f_{B \setminus \{j\}}(\vec x) \\
      &= 0;
    \end{align*}
    \item If $i \not \in B$ then
    \begin{align*}
      \sum_{j \in B} f^{\Mod}_{B \setminus \{j\}}(\vec x) &= \sum_{j \in B}\sum_{k \in B}f_{B \setminus \{j,k\} \cup \{i\}} \\
      &= 0.
    \end{align*}
  \end{itemize}
  Lastly, to see that modification preserves values at limit points, let $\vec x \in \calD_A$ be a limit point, i.e., $\vec x \in F_B$ for some $A \subset B \subsetneq n+1$. Since $|B| \geq k+1$, condition (2) of the $k$-sheer partition implies the existence of a neighborhood of $\vec x$ in which every point lies in $Y_i$ for some $i \in A$. Thus, in this neighborhood $f^{\Mod}_A \equiv f_A.$
\end{proof}

Figure \ref{fig:chart2} shows a table view of the piecewise definition of $\vec f^{\Mod}$ for 
the case $k=1, n=3$.

\begin{figure}[htbp]
  \centering
  \begin{tabular}{ c|c|c|c|c } 
                    & $Y_0$           & $Y_1$           & $Y_2$             & $Y_3$           \\ \hline
    $f^{\Mod}_{01}$ & $f_{01}$        & $f_{01}$        & $f_{01}+f_{02}$   & $f_{03}+f_{13}$ \\ 
    $f^{\Mod}_{02}$ & $f_{02}$        & $f_{01}+f_{12}$ & $f_{02}$          & $f_{03}+f_{23}$ \\ 
    $f^{\Mod}_{03}$ & $f_{03}$        & $f_{01}+f_{13}$ & $f_{02}+f_{23}$   & $f_{03}$        \\
    $f^{\Mod}_{12}$ & $f_{01}+f_{02}$ & $f_{12}$        & $f_{12}$          & $f_{13}+f_{23}$ \\
    $f^{\Mod}_{13}$ & $f_{01}+f_{03}$ & $f_{13}$        & $f_{12}+f_{23}$   & $f_{13}$        \\
    $f^{\Mod}_{23}$ & $f_{02}+f_{03}$ & $f_{12}+f_{13}$  & $f_{23}$         & $f_{23}$        \\
   \end{tabular}
  \caption{Given a 1-cochain $\vec f = \langle f_{01}, f_{02}, f_{03}, f_{12},f_{13},f_{23} \rangle$ and a $k$-sheer partition $\langle Y_0,Y_1,Y_2,Y_3 \rangle$, the above piecewise definition of $\langle f^{\Mod}_A : A \in [4]^2\rangle$ modifies $\vec f$ to a cocycle without affecting its values at any limit points. (Again, for brevity the columns are labeled $Y_i$ rather than $\calD_A \cap Y_i$.)}
  \label{fig:chart2}
\end{figure}
\pagebreak
\begin{proposition}\label{prop:embedding}
  Suppose $\xkappa^-$ admits a $k$-sheer partition.
  Then for any $\vec{\lambda} \leq \vec\kappa$ coordinate-wise, there is an embedding
  $$
    H^k(\xlambda^-,\underline{\bbZ_2}) \inj H^k(\xkappa^-,\underline{\bbZ_2}).
  $$
\end{proposition}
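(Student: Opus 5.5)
The embedding will be the composite $\vec f \mapsto (\vec f^\infty)^{\Mod}$: take a $k$-cocycle $\vec f$ on $\xlambda^-$, form its naive extension $\vec f^\infty$ (Definition \ref{def:naive-ext}), which is a $k$-cochain on $\xkappa^-$, and then apply the modification of Definition \ref{def:modification} with respect to a fixed $k$-sheer partition $\langle Y_i : i \leq n\rangle$ of $\xkappa^-$. By Lemma \ref{lem:mod-cts}, $(\vec f^\infty)^{\Mod}$ is a $k$-cocycle. Both operations are additive: the naive extension acts coordinatewise, and the modification is a fixed $\bbZ_2$-linear combination on each piece $\calD_A \cap Y_i$. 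So this gives a group homomorphism $Z^k(\xlambda^-) \to Z^k(\xkappa^-)$. It remains to show that it descends to cohomology and is injective there.

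For well-definedness, let $\vec f = d\vec g$ on $\xlambda^-$. I would compare $(\vec f^\infty)^{\Mod}$ with $d((\vec g^\infty)')$ for a suitable $(k-1)$-cochain on $\xkappa^-$. One option is to avoid this comparison and use a uniqueness principle instead. Two $k$-cocycles on $\xkappa^-$ that agree at all limit points differ by a $k$-cocycle that vanishes near the boundary, that is, a cocycle supported on the isolated points. By Lemma \ref{lem:mod-cts}, modifying a cocycle $\vec h$ with finite-style support gives a cocycle cohomologous to $\vec h$. Concretely, I would verify that $\vec h^{\Mod} - \vec h = d\vec e$, where $\vec e$ is defined piecewise on the $Y_i$ as in Proposition \ref{prop:trivialize}. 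Then $d(\vec g^\infty)$ agrees with $\vec f^\infty$ at limit points, because the naive extension commutes with $d$ near limit points. Hence $(\vec f^\infty)^{\Mod}$ and $(d\vec g^\infty)^{\Mod}$ agree at limit points. The remaining task is the key claim that a $k$-cocycle vanishing at all limit points is a coboundary. For this I would use a trivializer built from $\vec h$ on isolated points, where everything is continuous: put $e_B = h_{B\cup\{i\}}$ on $Y_i$ for $i\notin B$, as in Proposition \ref{prop:trivialize}. Its continuity near limit points is automatic because $h$ vanishes in a neighborhood of them.

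For injectivity, suppose $(\vec f^\infty)^{\Mod} = d\vec G$ on $\xkappa^-$. I would restrict $\vec G$ to $\xlambda^-$ by identifying $\xlambda$ with the clopen-in-the-relevant-sense subspace where $x_i \in \lambda_i \cup\{\infty\}$. The restriction map on continuous functions is the obvious one and is compatible with $d$. The issue is that $(\vec f^\infty)^{\Mod}$ restricted to $\xlambda^-$ need not equal $\vec f$ at isolated points. By Lemma \ref{lem:mod-cts}, it does agree with $\vec f^\infty$, and hence with $\vec f$, at every limit point of $\xkappa^-$ lying in $\xlambda^-$. A limit point of $\xlambda^-$ is also a limit point in $\xkappa^-$. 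So the restriction of $(\vec f^\infty)^{\Mod}$ differs from $\vec f$ by a cocycle on $\xlambda^-$ supported away from its limit points. Such a cocycle is a coboundary there, since on isolated points one can trivialize freely, for example by the standard cone construction at a single coordinate, which is continuous because the support is open and closed away from the boundary. Therefore $\vec f$ is a coboundary.

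The main obstacle is the lemma that cocycles vanishing near every limit point are coboundaries. On $\xkappa^-$ it should follow from the sheer partition via Proposition \ref{prop:trivialize}. On $\xlambda^-$ there is no sheer partition, so I must argue directly. I would try $e_B := h_{B\cup\{0\}}$ for $0\notin B$ and $e_B := 0$ otherwise, which is a standard contracting homotopy pointwise. The delicate point is continuity: the support of $\vec h$ must be such that $h_{B\cup\{0\}}$ extends by zero continuously to $\calD_B$. Since $\vec h$ vanishes in a neighborhood of every point of $\calD_{B\cup\{0\}}\setminus$(isolated points), I expect I will need to replace "vanishing at limit points" with "vanishing on a neighborhood of the boundary". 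This should be true here because each $f_A$ is continuous, so agreement at a limit point propagates to a neighborhood.
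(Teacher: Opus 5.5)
\textbf{There is a genuine gap: your descent step fails.} Both of its ingredients are false, and so is its conclusion.

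(i) $d(\vec g^\infty)$ need not agree with $(d\vec g)^\infty$ at limit points. Suppose $\vec x\in\calD_A(\vec\kappa)$ has exactly one $i_0\in A$ with $\lambda_{i_0}\le x_{i_0}<\kappa_{i_0}$. Then $(d\vec g)^\infty_A(\vec x)=0$ by definition, whereas $(d\vec g^\infty)_A(\vec x)=g_{A\setminus\{i_0\}}(\vec x^\infty)$. The paper's Example after Definition \ref{def:naive-ext} is $d\vec g$ for $g_{\{0\}}=1$, $g_{\{1\}}=g_{\{2\}}=0$. There the two differ at every limit point $(x_0,\infty,x_2)$ with $\omega\le x_2<\omega_1$.

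(ii) Your key lemma, that a $k$-cocycle vanishing at all limit points is a coboundary, is false. For $k=n$, every partition is $n$-sheer and every top cocycle vanishes at limit points vacuously, yet $H^1(X(\aleph_0,\aleph_1)^-,\underline{\bbZ_2})\neq 0$. For $k<n$, take $X(\aleph_0,\aleph_0,\aleph_1)^-$, which has a $1$-sheer partition. The nontrivial cocycle of Proposition \ref{prop:nontrivial-cocycle}, built from the $\Delta B$ of Lemma \ref{lemma:intersectC}, vanishes wherever a coordinate is $\infty$.

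(iii) Most seriously, $[\vec f]\mapsto[(\vec f^\infty)^{\Mod}]$ is not well defined in general.
\begin{itemize}
\item Set-up: take $\vec\lambda=(\aleph_1,\aleph_1)\le\vec\kappa=(\aleph_1,\aleph_2)$ and $k=n=1$, so $\Mod$ is the identity. Let $g_{\{0\}}(x_0,x_1)=\psi(x_0)$ with $\psi$ the indicator of the even countable ordinals, and $g_{\{1\}}=0$.
\item Reduction: $(d\vec g)^\infty=\psi(x_0)[x_1<\omega_1]$ (Iverson bracket). This is cohomologous to $E=\psi(x_0)[x_1\ge\omega_1]$.
\item Suppose $E=F_0+F_1$, where $F_0(-,x_1)$ equals $d(x_1)$ off a finite $R_{x_1}$, and $F_1(x_0,-)$ equals $c(x_0)$ off a finite $S_{x_0}$.
\item Pick $x_1^*\in[\omega_1,\omega_2)$ outside $\bigcup_{x_0}S_{x_0}$. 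This shows $c$ agrees with $\psi+\text{const}$ off a finite set, so both fibres of $c$ are uncountable.
\item For $x_1<\omega_1$, $E(-,x_1)=0$ forces $x_1\in S_{x_0}$ whenever $x_0\notin R_{x_1}$ and $c(x_0)\neq d(x_1)$.
\item Infinitely many $x_1<\omega_1$ share a value of $d$. Any $x_0$ in the opposite fibre of $c$ that avoids their countably many $R_{x_1}$ then has infinite $S_{x_0}$, a contradiction.
\end{itemize}
So a coboundary on $\xlambda^-$ is sent to a nonzero class on $\xkappa^-$.

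\textbf{How to finish instead.} Your injectivity argument also goes through the false lemma (ii), but it needs nothing of the kind: the restriction of $(\vec f^\infty)^{\Mod}$ to $\xlambda^-$ is exactly $\vec f$. Take $\vec x\in\xlambda^-\cap\calD_A\cap Y_i$ with $i\notin A$. Then $\vec x$ lies in a facet $F_C$ with $C\supseteq A$, so condition (2) of sheerness forces $i\in C$, i.e.\ $x_i\neq\infty$. Since $\vec f^\infty$ coincides with $\vec f$ at points of $\xlambda^-$, the identity $(d\vec f)_{A\cup\{i\}}(\vec x)=0$ gives $f^{\Mod}_A(\vec x)=f_A(\vec x)$. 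Restricting a trivializer therefore trivializes $\vec f$. This is the whole of the paper's proof, and it establishes only this kernel statement, not descent.

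To get an honest embedding, go the other way. Restriction $\rho\colon H^k(\xkappa^-,\underline{\bbZ_2})\to H^k(\xlambda^-,\underline{\bbZ_2})$ is a well-defined homomorphism, since restriction commutes with $d$ and $\calD_A(\vec\lambda)=\calD_A(\vec\kappa)\cap\xlambda$. The observation above shows $\rho$ is surjective. A linear section of this surjection of $\bbZ_2$-vector spaces is then the required injection.
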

\begin{proof}
  Given a $k$-cocycle $\vec f$ defined on $\xlambda^-$, perform the naive extension to obtain a cochain on $\xkappa^-$, then take its modification to obtain a cocycle. If this cocycle were trivial, then by Lemma $\ref{lem:mod-cts}$ the restriction of its trivializer to $\xlambda$ would trivialize $\vec f$ as well. 
\end{proof}

It now remains to determine when sheer partitions exist. As we saw above, the interesting cases are $0 \leq k < n$. We first consider when $k < n-1$.

\begin{proposition}\label{prop:fub-iff-zero}
  Let $\vec\kappa$ be a weakly increasing tuple of cardinals, $0 \leq k < n - 1$. Then a $k$-sheer partition of $\xkappa^-$ exists if and only if $\kappa_{n-k} = \aleph_0$, that is, the first $n-k+1$ entries of $\vec\kappa$ are all $\aleph_0$.
\end{proposition}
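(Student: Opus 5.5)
The plan is to prove the two directions separately. Necessity will come from combining the results already established (Theorem \ref{thm:main1} together with Proposition \ref{prop:embedding} and Corollary \ref{cor:fub-vanish}). Sufficiency will come from an explicit ``argmin'' partition.

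\emph{Necessity.} Suppose a $k$-sheer partition of $\xkappa^-$ exists but $\kappa_{n-k} \geq \aleph_1$. By Remark \ref{rem:fub} the partition is also $(k+1)$-sheer. Let $\vec\lambda$ be the tuple whose first $n-k$ entries are $\aleph_0$ and whose last $k+1$ entries are $\aleph_1$. Since $\vec\kappa$ is weakly increasing, $\kappa_j \geq \aleph_1$ for all $j \geq n-k$, so $\vec\lambda \leq \vec\kappa$ coordinatewise. We apply Theorem \ref{thm:main1} to $\vec\lambda$ with $k+1$ in place of $k$. This is legitimate because $0 < k+1 < n$ by the hypothesis $k < n-1$. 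The conditions hold: $\lambda_{n-k-1} = \aleph_0 < \aleph_1 = \cf\lambda_n$, and $2^{\lambda_0} = 2^{\aleph_0} \geq \aleph_1 = \lambda_{n-k}$. Hence $H^{k+1}(\xlambda^-,\underline{\bbZ_2}) \neq 0$. Proposition \ref{prop:embedding}, applied with the $(k+1)$-sheer partition, embeds this group into $H^{k+1}(\xkappa^-,\underline{\bbZ_2})$. But Corollary \ref{cor:fub-vanish} says the latter group is $0$, a contradiction.

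\emph{Sufficiency.} Assume $\kappa_0 = \cdots = \kappa_{n-k} = \aleph_0$, and write $P = \{0,\ldots,n-k\}$ for this block of countable coordinates, ordering $\alpha(\aleph_0) = \omega\cup\{\infty\}$ with $\infty$ on top. For $\vec x \in \xkappa^-$ with some $x_i \neq \infty$ for $i \in P$, put $\vec x \in Y_i$, where $i$ is the least index in $P$ at which $\min_{j \in P} x_j$ is attained. Put all remaining points, those with $x_j = \infty$ for every $j \in P$, into $Y_n$; for $i \in \{n-k+1,\ldots,n-1\}$ set $Y_i = \emptyset$. The key counting observation is that every $A \in [n+1]^{\geq k+1}$ meets $P$, because $|P| = n-k+1$ exceeds $|A^c| \leq n-k$. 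Consequently $\calD_A$ avoids the points placed in $Y_n$ by the second clause. On $\calD_A$ the block minimum is finite, attained at some coordinate that is fixed in a basic neighbourhood.

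For condition (1), take $\vec x \in \calD_A$. Its finite coordinates are fixed in a basic neighbourhood $\calN$. Its $\infty$-coordinates in $P$ can be forced above $\min_{j\in P}x_j$ by excluding a finite initial segment. Hence the argmin index is constant near $\vec x$, so each $Y_i \cap \calD_A$ is open in $\calD_A$, and therefore clopen since the pieces partition $\calD_A$. For condition (2), take $\vec x \in F_A$ and let $m = \min_{j \in A\cap P} x_j$, which is finite. Choose $\calE_j = \{0,\ldots,m\}$ for $j \in P\setminus A$ and $\calE_j = \emptyset$ otherwise. Then every $\vec y \in \calN_A(\vec x,\vec\calE)$ has its block minimum attained only at indices in $A\cap P$, so $\vec y \in \bigcup_{i\in A} Y_i$.

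The main obstacle is bookkeeping rather than ideas. I expect the delicate points to be the tie-breaking and making sure the extra $Y_n$ piece causes no trouble. It does not: the neighbourhoods $\calN_A(\vec x,\vec\calE)$ in (2) lie inside $\calD_A$, which never meets that piece when $|A| \geq k+1$. On the necessity side, the only care needed is checking the index shift when Theorem \ref{thm:main1} is applied with $k+1$.
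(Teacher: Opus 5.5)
Your proposal is correct and is essentially the paper's proof. Necessity uses the same reference tuple $\vec\lambda$, Theorem~\ref{thm:main1} in degree $k+1$, Proposition~\ref{prop:embedding} for the induced $(k+1)$-sheer partition, and the vanishing of $H^{k+1}$ from Corollary~\ref{cor:fub-vanish}. Sufficiency uses an argmin partition, justified by the same counting fact that every $A\in[n+1]^{\geq k+1}$ meets $\{0,\ldots,n-k\}$. The only difference is that you minimize over the countable block $P$, with least-index tie-breaking and the leftover points placed in $Y_n$, rather than over all coordinates; this is harmless, since those leftover points lie in no $\calD_A$ with $|A|\geq k+1$.
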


\begin{proof}
  For the forward direction, suppose $\vec\kappa$ admits a $k$-sheer partition. Then we simultaneously have that (i) $\vec\kappa$ also admits a $(k+1)$-sheer partition (Remark \ref{rem:fub}), and that (ii) $H^{k+1}(\xkappa^-,\underline{\bbZ_2}) = 0$ (Corollary \ref{cor:fub-vanish}). The only way to reconcile these with Proposition \ref{prop:embedding} is if no $\vec\lambda \leq \vec\kappa$ has nonzero $(k+1)$-cohomology. But now consider the tuple
  $$
  \vec\lambda = \langle\underbrace{\aleph_0,\aleph_0,\ldots,\aleph_0}_{n-k}, \underbrace{\aleph_1,\aleph_1,\ldots,\aleph_1}_{k+1}\rangle.
  $$
  By Theorem \ref{thm:main1}, $\vec\lambda$ has nonzero $(k+1)$-cohomology (note $k+1 < n$ by the assumption $k < n-1$). So it cannot lie below $\vec\kappa$ coordinatewise, which is equivalent to $\kappa_{n-k} = \aleph_0$.

  For the backward direction, suppose $\kappa_{n-k} = \aleph_0$, and for $i \leq n$ set
  $$
    Y_i = \{\vec x \in \xkappa : \min \vec x = x_i\},
  $$
  where $\infty$ is treated as larger than any ordinal.\footnote{Ties can be broken by an arbitrary scheme; for concreteness, we can prefer earlier coordinates, but we will never use this.} These sets are disjoint, and since any tuple containing a non-$\infty$ coordinate has a minimum, they cover $\xkappa^-$. It remains to check conditions (1),(2); for this fix $A \in [n+1]^{\geq k+1}$. Note that there are only $k$ coordinates in the set $\{n-k+1, n-k+2, \ldots, n-1, n\}$, and $|A| > k$; thus there must be $i_0 \in A$ such that $i_0 \leq n-k$, and so by hypothesis $\kappa_{i_0} = \aleph_0$. So if $\vec x \in \calD_A$, $\min \vec x < \aleph_0$. This lets us quickly verify (1): namely, for $j$ such that $x_j \neq \infty$, set
  $$
  \calE_j = \{0,1,\ldots,\min\vec x\} \in [\kappa_j]^{<\aleph_0}.
  $$

  Then $\calN_A(\vec x, \vec\calE)$ is a neighborhood of $\vec x$, all of whose points have the same minimum. If $\vec x \in Y_i$ then this neighborhood lies within $Y_i$ and if $\vec x \not \in Y_i$ it lies outside $Y_i$. So $Y_i$ is clopen in $\calD_A$.
  A similar approach shows (2): fix $\vec x \in F_A$ and for $j \not \in A$ set $\calE_j = \{0,1,\ldots,x_{i_0}\}$. If $\vec y \in \calN_A(\vec x,\vec\calE)$, then for every $j \not \in A$ we have $y_j > x_{i_0} = y_{i_0}$; so $\min \vec y \neq y_j$ and thus $\vec y \not \in Y_j$. We conclude that the minimum of $\vec y$ is among the $y_i$ for $i \in A$, i.e.,
  $$
    \calN_A(\vec x,\vec\calE) \subset \bigcup_{i \in A} Y_i,
  $$
  completing the proof of (2).
\end{proof}

The remaining case is $k=n-1$. Here the backward direction of Proposition \ref{prop:fub-iff-zero} still holds: if $\kappa_1 = \aleph_0$ then the above defines an $(n-1)$-sheer partition, thus trivializing the top cohomology of $\xkappa^-$. On the other hand, Aoki showed that if $\kappa_i \geq \aleph_i$ for all $i$, then $H^n(\xkappa^-,\underline{\bbZ_2}) \neq 0$. Our converse to Aoki's result comes from the requisite strengthening of the above forward direction for $k = n-1$:

\begin{proposition}\label{prop:aoki-converse-fub}
  Suppose there is $i_0 \leq n$ such that $\kappa_{i_0} < \aleph_{i_0}$. Then an $(n-1)$-sheer partition exists for $\vec\kappa$.
\end{proposition}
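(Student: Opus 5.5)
The plan is to use the preceding proposition, which characterizes $(n-1)$-sheer partitions: it suffices to partition $\prod_{i \leq n}\kappa_i$ into sets $Y_0,\ldots,Y_n$ such that each $Y_i$ is finite in $i$. Indeed, for $|A| \geq n$ the conditions in Definition \ref{def:fub} only involve points of $\calD_A$ with at most one coordinate equal to $\infty$. I would then prove the required combinatorial statement in the style of Sierpi\'nski and Kuratowski by two reductions followed by an induction.

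\emph{Reduction 1.} Being finite in $i$ is preserved under restriction to subproducts, so it suffices to treat a larger product. Since $\vec\kappa$ is weakly increasing, $\kappa_j \leq \kappa_{i_0} \leq \aleph_{i_0-1}$ for all $j \leq i_0$ (for $i_0 = 0$ this reads $\kappa_0$ finite, which I handle directly by $Y_0 = $ everything). Suppose $Z_0,\ldots,Z_{i_0}$ is a partition of $\kappa_0\times\cdots\times\kappa_{i_0}$ with $Z_j$ finite in $j$. Set $Y_j = Z_j \times \prod_{l>i_0}\kappa_l$ for $j \leq i_0$, and $Y_j = \emptyset$ for $j > i_0$. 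Each $Y_j$ is finite in $j$: fixing the other coordinates fixes the tail and reduces to $Z_j$.

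\emph{Reduction 2.} By Reduction 1 and restriction, it suffices to show the following for every $m \geq 0$:
\begin{center}
$\omega_m^{m+2}$ admits a partition into $m+2$ pieces, the $j$-th of which is finite in coordinate $j$.
\end{center}
Apply this with $m = i_0-1$ to the $i_0+1$ coordinates $0,\ldots,i_0$.

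\emph{Induction on $m$.} For $m=0$, take $Y_0 = \{x_0 \leq x_1\}$ and $Y_1 = \{x_1 < x_0\}$ on $\omega\times\omega$.

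For the step, fix a partition $\langle Z_j\rangle$ of $\omega_{m-1}^{m+1}$ that works at level $m-1$. For each $\alpha<\omega_m$ fix an injection $e_\alpha : \alpha+1 \inj \omega_{m-1}$. Given $\vec x \in \omega_m^{m+2}$, define:
\begin{itemize}
  \item $\alpha = \max \vec x$;
  \item $i$ the least index with $x_i = \alpha$;
  \item $j \neq i$ the index of the piece $Z_\bullet$ containing the $(m+1)$-tuple $\langle e_\alpha(x_l) : l \neq i\rangle$, where the coordinates of $Z$ are identified with $(m+2)\setminus\{i\}$ in increasing order.
\end{itemize}
Put $\vec x$ into $Y_j$.

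To check that $Y_j$ is finite in $j$, fix all coordinates other than $x_j$ and suppose $\vec x \in Y_j$. Since $j \neq i$, $x_j$ is not the unique strict maximum. Hence $\alpha$ is the maximum of the fixed coordinates, and $i$ is determined by the fixed coordinates together with the possibility $x_j = \alpha$ (if $x_j = \alpha$ and $j < i'$, then $i = j$, which contradicts $\vec x \in Y_j$). So $\alpha$ and $i$ are determined. By the choice of $Z_j$, only finitely many values of $e_\alpha(x_j)$ are possible, and injectivity of $e_\alpha$ leaves finitely many $x_j$.

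I expect the main obstacle to be exactly this tie-breaking and bookkeeping in the inductive step. The step must ensure that a coordinate which is the strict maximum is never the one whose piece it is placed in, since such a coordinate can range over unboundedly many values. Choosing $j \neq i$ handles this. The remaining subtlety is that the identification of coordinates of $Z$ depends on $i$, which is why $i$ must be shown to be determined by the fixed coordinates.
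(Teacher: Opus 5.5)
Your proof is correct and is essentially the paper's construction: the paper's ``virtual minimum'' partition is the same recursion as yours (remove the maximal coordinate, push the remaining coordinates into a smaller cardinal via fixed injections $e_\alpha$, and bottom out at the minimum on $\omega\times\omega$), preceded by the same reduction to $i_0 = n$ by padding with empty pieces. Your version is more careful on two points the paper glosses over: ties at the maximum, which you handle by taking $e_\alpha$ defined on $\alpha+1$ with least-index tie-breaking, and the check that $\max\vec x$ and its index are determined by the fixed coordinates whenever $\vec x \in Y_j$.
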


We will need a few ingredients for the proof.

\begin{itemize}
  \item First: we fix once and for all a family of injections 
  $$
    \langle e_\beta : \beta \inj |\beta| \bigm| \beta < \kappa_n \rangle.
  $$
  
  For a subset $A \subset n+1$ of size $\geq 2$ and a point $\vec x \in \prod_{i \in A} \kappa_i$, we define the \textit{virtual minimum} $\vmin\vec x$ of $\vec x$ (with respect to the $e_\beta$) inductively on the size of $A$, as follows:
    \begin{itemize}
      \item If $|A|=2$, then $\vmin \vec x = \min \vec x$.
      \item If $|A| \geq 3$, then the virtual minimum of $\vec x$ is defined as
      $$
        \vmin\vec x := \vmin\langle e_{\max\vec x}(x_i) : i \in A, i \neq i_{\max}\rangle,
      $$
      where $i_{\max}$ is the index of $\max{}\vec x$.
    \end{itemize}
    Algorithmically: remove the maximum element from the tuple, apply its corresponding injection to the remaining elements, then repeat until the tuple consists of just two elements, at which point the virtual minimum is just the minimum.
  
    Certainly this process terminates, and designates exactly one element of the tuple as its virtual minimum. We denote the index of $\vmin\vec x$ by $\iota_{\vmin}(\vec x)$.
    \item Second: as noted in Remark \ref{rem:fub}, in the case $k = n-1$, conditions (1) and (2) of Definition \ref{def:fub} reduce to the single condition
    \begin{quote}
      $(3)$ For every $j \leq n$ and every $\vec x^j = \langle x_i \in \kappa_i : i \neq j\rangle$, there are only finitely many possible values of $x_j \in \kappa_j$ such that the resulting tuple $\vec x$ lies in $Y_j$.
    \end{quote}
    So we will need only construct a partition $\langle Y_i \rangle$ satisfying (3).
    \item 
    Third: we in fact can assume $i_0 = n$, i.e., $\kappa_n < \aleph_n$. This is because of the general fact that an $(i_0-1)$-sheer partition of $X(\kappa_0,\ldots,\kappa_{i_0})^-$ yields an $(n-1)$-sheer partition of $\xkappa^-$ by setting the remaining $Y_i$ equal to the empty set.
\end{itemize}

With these in hand, we are ready to prove Proposition \ref{prop:aoki-converse-fub}.

\begin{proof}[Proof of Proposition \ref{prop:aoki-converse-fub}]
  Assume $\kappa_n < \aleph_n$. We first claim that $\kappa_n < \aleph_n$ (i.e., $i_0 = n$ in the notation of the statement) implies $\vmin\vec x < \aleph_0$. We prove this by induction on the length $n$:
  \begin{itemize}
    \item If $n = 1$ then $\vec \kappa = \langle \aleph_0,\aleph_0 \rangle$ and the claim is immediate.
    \item Suppose the claim holds for tuples of length $n$, and let $\vec\kappa = \langle \kappa_i: i \leq n+1 \rangle$ with $\kappa_{n+1} < \aleph_{n+1}$. Then, given $\vec x \in \prod_{i \leq n+1} \kappa_i$, we have $|\max \vec x| < \kappa_{n+1}$, so that for each $i \neq i_{\max}$ we have 
    $$
      e_{\max\vec x}(x_i) < |\max \vec x\, | < \kappa_{n+1} < \aleph_{n+1}.
    $$
    Therefore $\langle e_{\max\vec x}(x_i) : i \neq i_{\max} \rangle$ is a tuple of length $n$ whose maximum is below $\aleph_n$; the inductive hypothesis then implies its virtual minimum, and thus the virtual minimum of $\vec x$, is finite.
  \end{itemize}
  Now, for $j \leq n$ define
  $$
    Y_j := \left\{ \vec x \in \prod_{i \leq n} \kappa_i : \iota_{\vmin}(\vec x) = j \right\}.
  $$
  We claim that if the virtual minimum of $\vec x$ is finite, then the $Y_j$ satisfy (3). We again prove this by induction on the length of the tuple:
  \begin{itemize}
    \item If $n = 1$ then again $\vec\kappa = \langle \aleph_0,\aleph_0 \rangle$. Fixing one of these two coordinates,  we see that only finitely many choices of the other coordinate enable it to be the (virtual) minimum of $\vec x$.
    \item Now suppose the claim holds for tuples of length $n$, let $\vec \kappa = \langle\kappa_0,\ldots,\kappa_{n+1}\rangle$ have finite virtual minimum, and fix all $x_i \in \kappa_i$ except a single coordinate $x_j$. Suppose we have chosen $x_j$ such that $\iota_{\vmin}(\vec x) = j$. Then $j$ is also the virtual minimal index of $\langle e_{\max\vec x}(x_i) : i \neq i_{\max} \rangle$, a tuple of length $n$ with finite virtual minimum. Thus by induction $e_{\max\vec x}(x_j)$ comes from a finite set of possibilities, which pulls back to a finite set of possibilities for $x_j$ since the $e_\beta$ are injective.
  \end{itemize}
  So the $\langle Y_j \rangle$ form an $(n-1)$-sheer partition.
\end{proof}

\begin{corollary}
  Let $\vec\kappa$ be a tuple of cardinals such that $\kappa_i < i$ for some $i \leq n$. Then $H^n(\xkappa^-,\underline{\bbZ_2}) = 0$.
\end{corollary}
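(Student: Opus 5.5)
The corollary is stated with the typo ``$\kappa_i < i$''; as in Corollary \ref{thm:aoki-converse} from the introduction, it should read $\kappa_i < \aleph_i$. The plan is to combine Proposition \ref{prop:aoki-converse-fub} with Corollary \ref{cor:fub-vanish}.

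Suppose $\kappa_{i_0} < \aleph_{i_0}$ for some $i_0 \leq n$. Then Proposition \ref{prop:aoki-converse-fub} supplies an $(n-1)$-sheer partition $\langle Y_j : j \leq n\rangle$ of $\xkappa^-$. Corollary \ref{cor:fub-vanish}, applied with $k = n-1$, gives $H^\ell(\xkappa^-,\underline{\bbZ_2}) = 0$ for all $\ell \geq n-1$, and in particular for $\ell = n$.

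The one point I would check is that the corollary really applies in degree $k+1 = n$. Statement (1) of Theorem \ref{thm:main2} is the cleaner source for this: a $k$-sheer partition kills $H^{k+1}$. Concretely, Proposition \ref{prop:trivialize} with $k = n-1$ takes an arbitrary tuple in $\prod_{A \in [n+1]^{n+1}} \calC_A$, which is a single top function $f = f_{n+1}$. Every top cochain is a cocycle, since there is no $(n+1)$-cochain. The proposition then produces $g_B$ with $d\vec g = \vec f$ by setting $g_{B}(\vec x) = f(\vec x)$ on $Y_i$ when $B = (n+1)\setminus\{i\}$, and $0$ otherwise. Thus every top cocycle is a coboundary, so $H^n = 0$.

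I also want to confirm the reduction used inside Proposition \ref{prop:aoki-converse-fub}, namely that one may take $i_0 = n$ by padding with empty $Y_i$. With weakly increasing $\vec\kappa$, an $(i_0-1)$-sheer partition of $X(\kappa_0,\ldots,\kappa_{i_0})^-$ pulls back along projection to the first $i_0+1$ coordinates. Condition (3) then holds for the coordinates $j \leq i_0$ because it depends only on those coordinates. For $j > i_0$ it holds vacuously, since $Y_j = \emptyset$. This step is the only place needing care, and the paper treats it as routine, so the corollary follows directly.
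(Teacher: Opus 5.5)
Your proof is correct and matches the paper's argument: the paper proves this corollary by citing Proposition~\ref{prop:aoki-converse-fub} together with Corollary~\ref{cor:fub-vanish}, and you correctly read the hypothesis as $\kappa_i<\aleph_i$. One small correction: Corollary~\ref{cor:fub-vanish} as printed overstates what Proposition~\ref{prop:trivialize} yields, since a $k$-sheer partition kills $H^\ell$ only for $\ell\geq k+1$ (for $n=1$, for example, the constants survive in $H^0$), so drop the claim about degree $n-1$ and rely, as you already do, on your direct check that every top cochain is a coboundary.
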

\begin{proof}
  By Corollary \ref{cor:fub-vanish} and Proposition \ref{prop:aoki-converse-fub}.
\end{proof}

\section{Cocycles as pointwise limits of coboundaries}\label{sec:main3}

In this section we prove Theorem \ref{thm:main3}. In the process, we will describe a technique for reducing questions about intermediate cohomology $(0 < k < n)$ to questions about top cohomology $(k = n)$. We first note the existence of a certain restriction map between \v{C}ech complexes, which is well-defined on cohomology:

\begin{definition}\label{def:restriction}
  Let $\vec\kappa$ be a tuple of cardinals of length $n+1$, $0 < k < n$. Fix also some $B \in [n+1]^{k+1}$. The \textit{face restriction map}
  $$
    \Res^k_B : \check{C}(\vec\kappa) \to \check{C}^k(\vec\kappa\res_B)
  $$
  is defined as follows: for $\vec x \in \prod_{i \in B} \kappa_i,$ extend $\vec x$ to $\vec x^\infty$ by
  $$
    x^\infty_i = 
    \begin{cases}
      x_i,  & i  \in B, \\
      \infty, & i \not\in B.
    \end{cases}
  $$
  Then, given a tuple $\vec f = \langle f_A : A \in [n+1]^{k+1} \rangle \in \check{C}^k(\vec\kappa)$, $\Res^k_B\vec f$ is the single map $f_B^\infty$ which sends $\vec x$ to $f_B(\vec x^\infty)$.
\end{definition}

\begin{proposition}
  For $0 < k < n$ and $B \in [n+1]^{k+1}$, $\Res^k_B$ is well-defined on cohomology.
\end{proposition}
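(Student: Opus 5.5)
To show $\Res^k_B$ is well-defined on cohomology, I will check three things: it sends cochains to cochains, cocycles to cocycles, and coboundaries to coboundaries. Since the target $\check{C}^k(\vec\kappa\res_B)$ is the top-degree term for the tuple $\vec\kappa\res_B$ of length $k+1$, every $k$-cochain there is automatically a cocycle. So the cocycle condition is free, and the two real points are well-definedness on cochains and preservation of coboundaries. The map is linear, so this suffices.

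\textbf{Step 1: well-definedness on cochains.} Given $f_B \in \calC_B(\vec\kappa)$, the point $\vec x^\infty$ lies in $F_B \subset \calD_B$ whenever $\vec x \in \prod_{i\in B}\kappa_i$, so $f_B^\infty$ is defined. A top $k$-cochain for $\vec\kappa\res_B$ is just an arbitrary function on $\prod_{i \in B}\kappa_i$, since $\calD_B(\vec\kappa\res_B)$ is discrete. Hence no continuity needs to be checked.

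\textbf{Step 2: coboundaries go to coboundaries.} Suppose $f_B = \sum_{i\in B} g_{B\setminus\{i\}}$ with each $g_{B\setminus\{i\}}\in\calC_{B\setminus\{i\}}(\vec\kappa)$. I will show that $h_i := g_{B\setminus\{i\}}^\infty$, defined by $\vec x\mapsto g_{B\setminus\{i\}}(\vec x^\infty)$, lies in $\calC_{B\setminus\{i\}}(\vec\kappa\res_B)$. Then $f_B^\infty=\sum_i h_i$ is a coboundary in $\check{C}(\vec\kappa\res_B)$.

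To do this, I extend the definition of $h_i$ to all of $\calD_{B\setminus\{i\}}(\vec\kappa\res_B)$ by the same formula: points $\vec y\in X(\vec\kappa\res_B)$ with $y_j\neq\infty$ for $j\in B\setminus\{i\}$ are sent to $\vec y^\infty\in\calD_{B\setminus\{i\}}(\vec\kappa)$. The map $\vec y\mapsto\vec y^\infty$ is a continuous embedding $X(\vec\kappa\res_B)\to\xkappa$, since it is a product of identity maps and constant maps. It carries $\calD_{B\setminus\{i\}}(\vec\kappa\res_B)$ into $\calD_{B\setminus\{i\}}(\vec\kappa)$. Composing with the continuous $g_{B\setminus\{i\}}$ therefore gives a continuous function, which restricts on $\prod_{i\in B}\kappa_i$ to $h_i$.

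Equivalently, in the language used after Proposition~\ref{prop:combchar}, $h_i$ is mod-finite independent of the $i$-th coordinate. That matches the paper's description of top coboundaries.

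The only step needing care is Step 2, specifically checking that the extension map $\vec y \mapsto \vec y^\infty$ lands in the right domain $\calD_{B\setminus\{i\}}(\vec\kappa)$. This is straightforward because the coordinates outside $B$ are set to $\infty$, which is permitted since they do not lie in $B\setminus\{i\}$. Independence from the choice of representative then follows by linearity.
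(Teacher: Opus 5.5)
Your proof is correct and follows the paper's argument: the cocycle condition is vacuous in top degree, and a decomposition $f_B=\sum_{i\in B} g_{B\setminus\{i\}}$ yields $f_B^\infty=\sum_{i\in B} g^\infty_{B\setminus\{i\}}$. You also justify the membership $g^\infty_{B\setminus\{i\}}\in\calC_{B\setminus\{i\}}(\vec\kappa\res_B)$, which the paper only asserts, by using the continuous embedding $\vec y\mapsto\vec y^\infty$ that carries $\calD_{B\setminus\{i\}}(\vec\kappa\res_B)$ into $\calD_{B\setminus\{i\}}(\vec\kappa)$.
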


\begin{proof}
  The assertion that $\Res^k_B$ maps cocycles to cocycles is nearly vacuous, since top cocycles are just ordinary set functions. To see that coboundaries are mapped to coboundaries, suppose $\vec f = d\vec g$ for some $\vec g \in \check{C}^{k-1}(\vec\kappa)$. For each $j \in B$ define $g_{B \setminus \{j\}}^\infty$ exactly as for $\vec f$, so that $\vec x \in \prod_{i \in B} \kappa_i$ is mapped to $g_{B \setminus \{j\}}(\vec x^\infty)$. Now each $g_{B\setminus \{j\}}^\infty \in \check{C}_{B \setminus \{j\}}(\vec\kappa\res_B)$, and $\sum_{j \in B} g_{B\setminus \{j\}}^\infty = f_B^\infty = \Res^k_B(\vec f)$.
\end{proof}

As an immediate corollary, we note that if $\Res^k_B(\vec f)$ is nonzero in the top cohomology of $X(\vec\kappa\res_B)^-$, then $\vec f$ must be nonzero in $H^k(\xkappa^-,\underline{\bbZ_2})$. This is the manner in which we use the face restriction map to reduce questions on intermediate cohomology to questions of top cohomology.

We next examine the cocycle condition more closely.

\begin{definition}\label{def:phi}
  Fix $\vec\kappa = \langle \kappa_0, \ldots, \kappa_n\rangle, B \in [n+1]^{k+1}$, and let $\vec f \in H^k(\xkappa^-,\underline{\bbZ_2})$ be a $k$-cocycle. For a tuple 
  $$
    \vec x = \langle x_i \in \kappa_i \cup \{\infty\} : i \not \in B\rangle \in X(\vec\kappa^B),
  $$ 
  and a choice of remaining coordinates $\vec y \in \prod_{i \in B} \kappa_i$, we set
  $$
    \phi_{\vec f,B,\vec{x}}(\vec y) = f_B(\vec x, \vec y).
  $$
  The $\phi_{\vec f, B, \vec x}^{\vec y}$ then assemble into a function
  $$
    \vec x \mapsto \phi_{\vec f, B, \vec x} : X(\vec\kappa^B) \to Z^k(\vec\kappa\res_B)
  $$
  which we denote by $\phi_{\vec f, B}$.
\end{definition}

Note that the codomain of $\phi_{\vec f, B}$ is naturally equipped with the product topology (i.e., that of pointwise convergence).

\
\begin{proposition}\label{prop:ptwise-limit}
  For any $k$-cocycle $\vec f$, $B$, the map $\phi_{\vec f, B}$ is continuous. Moreover, for any $\vec x \in \prod_{i \not \in B} \alpha(\kappa_i)$ except possibly $(\infty,\ldots,\infty)$, $\phi_{\vec f, B}(\vec x)$ is a coboundary.
\end{proposition}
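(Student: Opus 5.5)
The plan is to treat the two assertions separately; both come down to restricting the continuous functions $f_A$ to suitable slices of $\calD_A(\vec\kappa)$.

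\emph{Continuity.} Since $k+1 = |B|$, the group $Z^k(\vec\kappa\res_B)$ consists of top cocycles on $X(\vec\kappa\res_B)^-$. These are arbitrary functions $\prod_{i\in B}\kappa_i \to \bbZ_2$, and the codomain carries the product topology. So it suffices to show that for each fixed $\vec y \in \prod_{i \in B}\kappa_i$ the coordinate map $\vec x \mapsto f_B(\vec x,\vec y)$ is continuous on $X(\vec\kappa^B)$. For this I would use the map $\vec x \mapsto (\vec x,\vec y)$. All coordinates in $B$ are fixed at ordinals, so it is a continuous map (indeed a homeomorphism onto a closed slice) from $X(\vec\kappa^B)$ into $\calD_B(\vec\kappa)$. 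Composing with the continuous map $f_B$ gives the claim. This is essentially the content of Fact 2, which identifies $\calC_B(\vec\kappa)$ with $\calC_\emptyset(\vec\kappa^B)^{\prod_{i\in B}\kappa_i}$. In particular, each coordinate of $\phi_{\vec f,B}$ lies in $\calC_\emptyset(\vec\kappa^B)$.

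\emph{Coboundary away from $(\infty,\ldots,\infty)$.} Fix $\vec x \neq (\infty,\ldots,\infty)$ and choose $j \notin B$ with $x_j \neq \infty$. Set $A' = B \cup \{j\} \in [n+1]^{k+2}$. The cocycle condition $(d\vec f)_{A'} = 0$ says that on $\calD_{A'}(\vec\kappa)$ we have
$$
  f_B = \sum_{i \in B} f_{A'\setminus\{i\}}.
$$
Every point $(\vec x,\vec y)$ with $\vec y \in \prod_{i\in B}\kappa_i$ lies in $\calD_{A'}$, because $x_j$ is finite. Accordingly, for $i \in B$ I would define
$$
  g_{B\setminus\{i\}}(\vec y) := f_{A'\setminus\{i\}}(\vec x,\vec y), \qquad \vec y \in \calD_{B\setminus\{i\}}(\vec\kappa\res_B).
$$
Here $y_i$ is now allowed to be $\infty$.

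The one point needing care, and the main (mild) obstacle, is checking that $g_{B\setminus\{i\}}$ lies in $\calC_{B\setminus\{i\}}(\vec\kappa\res_B)$. To see this, note that $\vec y \mapsto (\vec x,\vec y)$ maps $\calD_{B\setminus\{i\}}(\vec\kappa\res_B)$ continuously into $\calD_{A'\setminus\{i\}}(\vec\kappa)$. The reason is that the coordinates required to be finite in $A'\setminus\{i\} = (B\setminus\{i\})\cup\{j\}$ are exactly those in $B\setminus\{i\}$, which are finite in $\vec y$, together with $j$, where $x_j$ is finite. Composing with the continuous $f_{A'\setminus\{i\}}$ gives continuity. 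Then, working mod $2$, the displayed identity evaluated at $(\vec x,\vec y)$ reads
$$
  \phi_{\vec f,B,\vec x} = \sum_{i\in B} g_{B\setminus\{i\}} = d\vec g,
$$
so $\phi_{\vec f,B}(\vec x)$ is a coboundary in $\check C^k(\vec\kappa\res_B)$.

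This argument breaks down exactly at $\vec x = (\infty,\ldots,\infty)$, where no finite coordinate $j$ is available, which is consistent with the statement.
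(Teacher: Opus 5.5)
Your proposal is correct and follows the paper's proof. Continuity is checked coordinatewise in $\vec y$: the paper argues via clopen supports and the identification of Fact 2, and you use the slice map $\vec x\mapsto(\vec x,\vec y)$. The coboundary claim comes from the cocycle identity on $\calD_{B\cup\{j\}}$ for a coordinate $j\notin B$ with $x_j\neq\infty$. Your explicit check that each $g_{B\setminus\{i\}}$ lies in $\calC_{B\setminus\{i\}}(\vec\kappa\res_B)$ fills in a detail the paper leaves as ``mod-finite constant in coordinate $i$''.
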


\begin{proof}
  First notice that $Z^k(\vec\kappa\res_B) = \{f : \prod_{i \in B} \kappa_i \to \bbZ_2\}$ is homeomorphic to a product of one copy of $\bbZ_2$ for each $\vec y \in \prod_{i \in B} \kappa_i$; thus to show $\phi_{\vec f, B}$ is continuous, it suffices to show each of the coordinate functions
  \begin{align*}
    \phi_{\vec f, B}^{\vec y} \quad&:\quad X(\vec\kappa^B) \to \bbZ_2 \\
    \vec x \in \prod_{i \not \in B} \alpha(\kappa_i) \quad&\mapsto\quad f_B(\vec x,\vec y)
  \end{align*}
  is continuous. This in turn holds if and only if the support of $\phi_{\vec f, B}^{\vec y}$ is a clopen subset of $\prod_{i \not \in B} \alpha(\kappa_i)$. But continuity of $f_B$ guarantees that
  $$
    \supp f_B \subset \left(\prod_{i \not \in B} \alpha(\kappa_i)\right)^{\prod_{i \in B} \kappa_i}
  $$
  is clopen; since $\supp(\phi_{\vec f,B}^{\vec y})$ is exactly the projection of this set onto the $\vec y$-th coordinate, and $\prod_{i \not \in B} \alpha(\kappa_i)$ is compact, the projection must be clopen as well. So $\phi_{\vec f,B}$ is continuous.

  Now, let $\vec x \in \prod_{i \not \in B} \alpha(\kappa_i)$ be a point other than $(\infty,\ldots,\infty)$, so there exists $j \not \in B$ such that $x_j \neq \infty$. Set $A = B \cup \{j\}$, and note that the cocycle condition for $\vec f$ guarantees that
  $$
    f_B = \sum_{i \in B} f_{B \setminus \{i\} \cup \{j\}}.
  $$
  So we obtain
  $$
    \phi_{\vec f, B}(\vec x) = f_B(\vec x,-) = \sum_{i \in B} f_{B \setminus \{i\} \cup \{j\}}(\vec x,-)
  $$
  and the latter expresses $\phi_{\vec f, B}(\vec x)$ as a sum of functions, each mod-finite constant in one of the coordinates $i \in B$.
\end{proof}

Proposition \ref{prop:ptwise-limit} can be construed as stating that a cocycle $\vec f$ can be written as a certain pointwise limit of coboundaries.
The converse also holds:

\begin{proposition}\label{prop:ptwise-limit-converse}
  Let $\phi : X(\vec\kappa^B) \to Z^k(\vec\kappa\res_B)$ be a continuous map which sends all points except possibly $(\infty,\ldots,\infty)$ to coboundaries. Then there exists a $k$-cocycle $\vec f \in H^k(\vec\kappa,\underline{\bbZ_2})$ such that $\phi(\infty,\ldots,\infty) = \Res^k_B(\vec f)$.
\end{proposition}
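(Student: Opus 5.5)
The plan is to build $\vec f$ by hand, with $f_B$ read off from $\phi$ and the remaining components chosen so as to force the cocycle identities. Identify $\calD_B(\vec\kappa)$ with $X(\vec\kappa^B)\times\prod_{i\in B}\kappa_i$, the second factor discrete, and set
$$
  f_B(\vec x,\vec y) := \phi(\vec x)(\vec y).
$$
Continuity of $f_B$ on $\calD_B$ reduces to continuity of each coordinate map $\vec x\mapsto\phi(\vec x)(\vec y)$, since the $\vec y$-factor is discrete. This is exactly the product-topology continuity of $\phi$, i.e.\ the converse of the first half of Proposition \ref{prop:ptwise-limit}. With this choice $\Res^k_B(\vec f)=f_B(\infty,\ldots,\infty,-)=\phi(\infty,\ldots,\infty)$ automatically, so everything rests on choosing the other $f_A$ to make $\vec f$ a cocycle.

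Next I would reduce the cocycle condition to the faces $A'=B\cup\{j\}$. I would look for $\vec f$ with $f_A=0$ whenever $|A\setminus B|\geq 2$. The components with $|A\setminus B|=1$, i.e.\ $A=B\setminus\{i\}\cup\{j\}$ for $i\in B$ and $j\notin B$, are written $h_{i,j}$. The cocycle identity on $A'=B\cup\{j\}$ then reads
$$
  f_B=\sum_{i\in B}h_{i,j}\quad\text{on }\calD_{B\cup\{j\}}.
$$
The identity on $A'=B\setminus\{i\}\cup\{j,j'\}$ reads $h_{i,j}=h_{i,j'}$ on $\calD_{A'}$. The remaining faces contain only zero components. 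To reconcile the different $j$, I would use the slices $\{x_j=\beta\}$ ($j\notin B$, $\beta<\kappa_j$). These are clopen and compact in $X(\vec\kappa^B)$ and cover everything except $(\infty,\ldots,\infty)$. I would use them via a ``least finite coordinate'' rule: on the part of $\calD_{A'}$ where $j$ is the least index outside $B$ with $x_j\neq\infty$, the values of $h_{i,j'}$ for larger $j'$ are defined to agree with $h_{i,j}$. Alternatively I would allow the components with $|A\setminus B|\ge 2$ to be nonzero and correct the discrepancy there, in the style of the modification of Definition \ref{def:modification}.

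The main obstacle is producing the $h_{i,j}$ continuously. On each slice $x_j=\beta$ we know that $\phi(\vec x)$ is a coboundary pointwise. However, we need one trivialization varying continuously in the remaining coordinates of $X(\vec\kappa^{B\cup\{j\}})$, including at the slice's own point at infinity. By Fact 2, $\calC_{B\cup\{j\}}(\vec\kappa)\cong\calC_B(\vec\kappa^j)^{\kappa_j}$, so for fixed $\beta$ this is the same problem one dimension lower. The difference is that \emph{every} value, including the one at infinity, is a coboundary. I would therefore prove an auxiliary statement by induction on $n-k$ (the base $n=k$ being trivial): if $\psi:X(\vec\kappa^B)\to Z^k(\vec\kappa\res_B)$ is continuous and all of its values are coboundaries, then the associated $f_B$ is the $B$-component of $d\vec g$ for a continuous $(k-1)$-cochain $\vec g$. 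For the inductive step I would first trivialize at the point at infinity. I would extend that trivializer constantly in $\vec x$ and subtract it. The resulting map is continuous, vanishes at infinity, and is still coboundary-valued, so it is locally small near infinity. Away from infinity I would apply the induction hypothesis slice by slice on the finitely many slices not absorbed into the neighborhood of infinity. I expect the delicate point to be that a neighborhood of infinity in the product topology controls only finitely many $\vec y$ at a time. If this uniformity fails, the fallback is to use compactness of each slice together with the clopen-partition technique of Lemma \ref{lem:upgrade} to glue locally chosen trivializers.
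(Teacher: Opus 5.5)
\textbf{There is a genuine gap: your auxiliary lemma is false, and no argument that keeps $f_B=\phi$ can work in general.}

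Your setup ($f_B=\phi$, the components with $|A\setminus B|=1$ trivializing $\phi$, all others zero) is the paper's construction. The paper takes, for each $\vec x^B\neq(\infty,\ldots,\infty)$, an arbitrary trivializer $\langle g_{B\setminus\{i\}}\rangle$ of $\phi(\vec x^B)$. It sets $f_{B\setminus\{s\}\cup\{t\}}(\vec x)=g_{B\setminus\{s\}}(\vec x\res_B)$, which does not depend on $t$, so the faces $B\setminus\{i\}\cup\{j,j'\}$ need no least-coordinate rule. It then checks continuity only in $\vec x\res_B$. That is a complete proof when $|B^c|=1$, since then $\vec x^B=x_t$ ranges over the discrete set $\kappa_t$ on $\calD_{B\setminus\{s\}\cup\{t\}}$. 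You are right that for $|B^c|\ge 2$ the crux is continuity in the remaining coordinates of $\vec x^B$, which the paper does not address.

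Your remedy fails because the auxiliary lemma is false already for $n-k=1$. Take $\vec\kappa=\langle\aleph_0,\aleph_0,\aleph_1\rangle$ and $B=\{1,2\}$. Fix a top cocycle $F$ on $\aleph_0\times\aleph_1$ that is not a coboundary (Aoki). Set $\psi(m)=F\cdot\chi_{\{m\}\times\aleph_1}$ for $m<\omega$ and $\psi(\infty)=0$.
\begin{itemize}
\item Each point lies in the support of at most one $\psi(m)$, so $\psi$ is continuous.
\item Each $\psi(m)$ is finitely supported in its first coordinate, hence a coboundary.
\item Suppose $\psi(m)=g_{\{1\}}(m,-)+g_{\{2\}}(m,-)$ for all $m\in\alpha(\aleph_0)$, with $g_{\{i\}}\in\calC_{\{i\}}(\vec\kappa)$.
\item Continuity at $x_0=\infty$ and the tube lemma on the compact sets $\{\infty\}\times\{y_1\}\times\alpha(\aleph_1)$ and $\{\infty\}\times\alpha(\aleph_0)\times\{y_2\}$ give two facts. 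For each $y_1$, only finitely many $m$ have $g_{\{1\}}(m,y_1,-)\neq g_{\{1\}}(\infty,y_1,-)$. For each $y_2$, only finitely many $m$ have $g_{\{2\}}(m,-,y_2)\neq g_{\{2\}}(\infty,-,y_2)$.
\item Hence $G_i=\sum_{m<\omega}\bigl(g_{\{i\}}(m,-)+g_{\{i\}}(\infty,-)\bigr)$ are locally finite sums, continuous on the corresponding domains in $X(\aleph_0,\aleph_1)^-$.
\item But $G_1+G_2=\sum_m\psi(m)=F$, contradicting the choice of $F$.
\end{itemize}

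Planting $\psi$ in a slice shows that no argument keeping $f_B=\phi$ can work. Take $\vec\kappa=\langle\aleph_0,\aleph_0,\aleph_0,\aleph_1\rangle$ and $B=\{2,3\}$. Let $\phi(x_0,x_1)=\psi(x_1)$ if $x_0=0$, and $\phi=0$ otherwise. This $\phi$ satisfies the hypotheses, and the conclusion holds trivially with $\vec f=0$. Yet no cocycle has $f_B=\phi$. Restricting the identity on the face $\{0,2,3\}$ to the clopen slice $\{x_0=0\}$ would produce exactly such a continuous family of trivializers of $\psi$. In particular, the paper's pointwise choices cannot be made continuous here either.

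Your fallbacks do not rescue this:
\begin{itemize}
\item Nonzero components on faces with $|A\setminus B|\ge 2$ cannot help, since the identity on $B\cup\{j\}$ involves only $f_B$ and the $h_{i,j}$.
\item Gluing by compactness cannot help, since no continuous family exists at all.
\end{itemize}

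The missing idea is that only $f_B(\infty,\ldots,\infty,-)$ is prescribed. So you should first replace $\phi$ by a map with the same value at $(\infty,\ldots,\infty)$ that is tame on slices. For example, suppose every entry of $\vec\kappa^B$ is $\aleph_0$. Then $\vec x\mapsto\phi(\min\vec x,\ldots,\min\vec x)$ still satisfies the hypotheses. On each slice $\{x_t=\beta\}$ it takes finitely many values, each on a clopen piece. Choosing one trivializer per value then makes the pointwise construction continuous. This covers Proposition~\ref{prop:main3-app}, where $\phi$ already depends only on $\min\vec x^B$. For general $\vec\kappa^B$, neither your proposal nor the paper's argument as written supplies this step.
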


\begin{proof}
  Let $A \in [n+1]^{k+1}, \vec x \in \calD_A$. To define $f_A$, we distinguish three cases:
  \begin{itemize}
    \item If $A = B$, we set $f_A(\vec x) = \phi(\vec x^B)(\vec x\res_B)$.
    \item If $A = B \setminus \{s\} \cup \{t\}$, where $s \in B, t \not \in B$, then by assumption $\vec x^B \in X(\vec\kappa^B)^-$. Thus $\phi(\vec x^B) \in Z^k(\vec\kappa\res_B)$ is a coboundary: there exist functions 
    $$
      \langle g_{B \setminus \{i\}} \in \calC_{B \setminus \{i\}}(\vec\kappa\res_B) : i \in B \rangle
    $$
    such that $\phi(\vec x^B) = \sum_{i \in B} g_{B \setminus \{i\}}$. We set $f_A(\vec x) := g_{B \setminus \{s\}}(\vec x\res_B)$. Note this definition depends only on $\vec x^B$ and $s$; in particular it is independent of $t$.
    \item In all other cases we set $f_A$ to be identically zero.
  \end{itemize}
  We first show that the $f_A$ so defined form a cochain, i.e., are each continuous. If $A = B$, then continuity of $f_A$ is exactly continuity of $\phi$, and if $|A \triangle B| > 2$ then $f_A$ is zero, hence certainly continuous. So suppose $A = B \setminus \{s\} \cup \{t\}, s \in B, t \not \in B$. In this case $f_A(\vec x) = g_{B \setminus \{s\}}(\vec x\res_B)$, where $g_{B \setminus \{s\}} \in \calC_{B \setminus \{s\}}(\vec\kappa\res_B)$. As noted above, this definition is independent of $t$, so that $g_{B \setminus \{s\}}$ remains continuous on $\calD_{B \setminus \{s\} \cup \{t\}}$. This shows $\vec f \in \check{C}^k(\xkappa^-,\underline{\bbZ_2})$.

  To see that $\vec f$ is moreover a $k$-cocycle, let $A' \in [n+1]^{k+2}$. First suppose $B \subset A',$ so that $A' = B \cup \{t\}$ for some $t \not \in B$. We calculate, for any $\vec x \in \calD_{A'}(\vec\kappa)$,
  \begin{align*}
    \sum_{i \in A'} f_{A' \setminus \{i\}}(\vec x) &= f_B(\vec x) + \sum_{s \in B} f_{B \setminus \{s\} \cup \{t\}}(\vec x) \\
    &= f_B(\vec x) + \sum_{s \in B} g_{B \setminus \{s\}}(\vec x\res_B) \\
    &= \phi(\vec x^B)(\vec x\res_B) + \phi(\vec x^B)(\vec x\res_B) \\
    &= 0,
  \end{align*}
  as required. If instead $A'$ contains exactly two elements not in $B$, write $A' = B \setminus \{s_0\} \cup \{t_0,t_1\}$ where $s_0 \in B$ and $t_0,t_1 \not \in B$ are distinct. Then we calculate
  \begin{align*}
    \sum_{i \in A'} f_{A' \setminus \{i\}}(\vec x) &= f_{B \setminus \{s_0\} \cup \{t_0\}}(\vec x) + f_{B \setminus \{s_0\} \cup \{t_1\}}(\vec x) \\
    &+ \sum_{s_1 \in B \setminus \{s_0\}} f_{B \setminus \{s_0,s_1\} \cup \{t_0,t_1\}}(\vec x).
  \end{align*}
  The first two summands are equal due to the independence of the definition from $t_0,t_1$; in the latter sum all functions are identically zero. So the overall sum is zero.

  Lastly, if $A'$ contains at least three elements not in $B$, then each $A' \setminus \{i\}$ contains at least two elements not in $B$, so that each term in the sum is zero. This concludes the proof that $\vec f$ is a cocycle.

  We finish by observing that substituting $(\infty,\ldots,\infty)$ in the definition of $\phi_{\vec f, B}$ yields a map sending $\vec x$ to $f_B(\vec x^\infty)$, which is exactly $\Res^k_B(\vec f)$. 
\end{proof}

This, along with Proposition \ref{prop:ptwise-limit-converse}, completes the proof of Theorem \ref{thm:main3}.

To finish this section, we apply Theorem \ref{thm:main3} to settle a case not handled by the methods of sections \ref{sec:main1} or \ref{sec:main2}:

\begin{proposition}\label{prop:main3-app}
  Let $z, k$ be positive integers, and let $\vec\kappa$ be a tuple of length $z+k+1$ such that
  \begin{itemize}
    \item $\kappa_i = \aleph_0$ for $i < z$, and
    \item for $i \geq z$ the $\kappa_i$ are uncountable and strictly increasing.
  \end{itemize} 
  Then $H^k(\xkappa^-,\underline{\bbZ_2}) \neq 0$.
\end{proposition}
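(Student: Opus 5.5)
We apply Theorem~\ref{thm:main3} with $n = z+k$ and $B = \{z, z+1, \ldots, z+k\}$, the block of uncountable coordinates, so that $|B| = k+1$. Write $\vec\mu = \vec\kappa\res_B = \langle \mu_0 < \mu_1 < \cdots < \mu_k\rangle$. Since these cardinals are uncountable and strictly increasing, $\mu_j \geq \aleph_{j+1} \geq \aleph_j$. Aoki's theorem then gives $H^k(X(\vec\mu)^-,\underline{\bbZ_2}) \neq 0$, and a top cocycle for $X(\vec\mu)^-$ is just a function $\prod_{j\leq k}\mu_j \to \bbZ_2$. The domain of the required map is $X(\vec\kappa^B) = \alpha(\aleph_0)^z$. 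We first compose with $\vec x \mapsto \min \vec x : \alpha(\aleph_0)^z \to \alpha(\aleph_0)$, with $\infty$ on top. This map is continuous, and it sends only $(\infty,\ldots,\infty)$ to $\infty$. It therefore suffices to build a continuous $\psi : \alpha(\aleph_0) \to Z^k(\vec\mu)$ such that $\psi(m)$ is a coboundary for every $m<\omega$ and $\psi(\infty)$ is not. Then $\phi = \psi\circ\min$ satisfies the hypotheses of Theorem~\ref{thm:main3}, reading hypothesis (1) as ``coboundary off the corner'', as in Propositions~\ref{prop:ptwise-limit} and~\ref{prop:ptwise-limit-converse}.

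To build $\psi$ we use a truncation by the virtual minimum from the proof of Proposition~\ref{prop:aoki-converse-fub}. Fix injections $e_\beta : \beta \inj |\beta|$ and, for $m<\omega$, let
$$
  C_m = \{\vec y \in \textstyle\prod_{j \leq k}\mu_j : \vmin \vec y < m\}, \qquad C_\infty = \bigcup_{m<\omega} C_m .
$$
For a function $f$ on $\prod_{j\le k}\mu_j$ put $\psi(m) = f\cdot 1_{C_m}$ and $\psi(\infty) = f\cdot 1_{C_\infty}$. Continuity of $\psi$ in the product topology is immediate, because for each fixed $\vec y$ the value $\psi(m)(\vec y)$ is eventually constant and equal to $\psi(\infty)(\vec y)$.

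To see that each $\psi(m)$ is a coboundary, we rerun the second induction in the proof of Proposition~\ref{prop:aoki-converse-fub} inside $C_m$. That induction only used that the virtual minimum lies in a fixed finite set. It shows that the pieces $Y_j \cap C_m$, where $Y_j = \{\iota_{\vmin} = j\}$, are finite in the $j$-th coordinate. Hence $\psi(m) = \sum_j f\cdot 1_{Y_j\cap C_m}$ is a sum of functions, each mod-finite independent of one coordinate, which is exactly the coboundary criterion for top cochains.

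The main obstacle is choosing $f$ so that $f\cdot 1_{C_\infty}$ is \emph{not} a coboundary. In the present setting the virtual minimum can be an infinite (countable) ordinal, so $C_\infty$ need not be everything. We would first try to show that $C_\infty$ is ``large'' in Aoki's sense: it still carries a nontrivial top class. The approach is to redo Aoki's nonvanishing argument for $\langle \aleph_0,\aleph_1,\ldots,\aleph_k\rangle \leq \vec\mu$, restricted to the subset of $\prod_j \mu_j$ on which every stage of the virtual-minimum recursion lands in $\omega$. The induction would remove the largest coordinate and apply $e_{\max}$. At the final two-coordinate stage the restriction to $\omega$ can be arranged, because we have one extra uncountable level to spare, namely $\mu_0 \geq \aleph_1$ rather than $\aleph_0$. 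If this direct route fails, the fallback is to apply the naive-extension embedding $H^k(X(\aleph_1,\ldots,\aleph_{k+1})^-) \inj H^k(X(\vec\mu)^-)$ for top cohomology. We would then build $f$ supported inside a copy of $\omega_1\times\cdots\times\omega_{k+1}$ in which the final stage of the recursion uses injections into $\omega$.
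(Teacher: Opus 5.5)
\textbf{There is a genuine gap: the nontriviality of the corner value is never proved.}

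Your setup is correct as far as it goes:
\begin{enumerate}
\item Composing with $\min:\alpha(\aleph_0)^z\to\alpha(\aleph_0)$ is continuous and isolates the corner.
\item $\psi$ is continuous in the product topology.
\item For finite $m$, each $Y_j\cap C_m$ really is finite in the $j$-th coordinate. For fixed $\vec y^j$ with $\iota_{\vmin}=j$, the chain of injections applied to $y_j$ is determined by the other coordinates, so at most $m$ values of $y_j$ survive. Hence $f\cdot 1_{C_m}$ is a coboundary for every $f$.
\end{enumerate}
But this is the easy half. The whole content of the proposition is that the corner value is \emph{not} a coboundary, and there you give only a hedged plan (``we would first try'', ``if this direct route fails''), not an argument.

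Nothing you prove excludes that every function supported on $C_\infty$ is trivial. Aoki's theorem gives a nonzero class in $H^k(X(\vec\mu)^-)$ but says nothing about classes supported on $C_\infty$. For an arbitrary family $e_\beta$ you do not even know that $C_\infty$ contains the slab $\{\vec y: y_0<\omega\}$, since $e_{\max\vec y}$ may send finite ordinals to infinite ones. The fallback is also not coherent as stated. There is no injection of an uncountable ordinal into $\omega$. An arbitrary copy of $\omega_1\times\cdots\times\omega_{k+1}$ has no reason to lie in $C_\infty$: after $k-1$ stages the two surviving coordinates are merely below $\omega_2$.

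\textbf{How the paper handles it.} The paper needs no virtual minimum. It sets $\phi(\vec m)(\vec\beta)=1$ iff $\beta_0<\min\vec m$ and $\beta_1,\ldots,\beta_k$ are even. Each finite stage is then supported on a set finite in the $\beta_0$ direction, so it is trivially a coboundary. The real work is showing that $[\beta_0<\omega]\cdot[\beta_1,\ldots,\beta_k\text{ even}]$ is not a coboundary, via the inductively defined properties $P_j$:
\begin{enumerate}
\item Every coboundary has $P_j$. Here $\mu_i<\mu_j$ for $i<j$ gives a $\gamma^*$ beyond all the finite exceptional sets of the last summand.
\item This function fails $P_j$: pair an even $\gamma_0$ with an odd $\gamma_1$ above any $\gamma^*$. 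The induction bottoms out at the fact that $[\beta_0<\omega]$ is not constant on a cofinite subset of the uncountable $\mu_0$.
\end{enumerate}

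\textbf{How to repair your version.} Choose the $e_\beta$ to fix finite ordinals pointwise, so that $\{\vec y:y_0<\omega\}\subseteq C_\infty$. Then take $f$ either to be the paper's function, or to be the extension by zero of Aoki's nontrivial top class on $X(\aleph_0,\aleph_1,\ldots,\aleph_k)^-$; the latter stays nontrivial in $X(\vec\mu)^-$ by the top-degree naive-extension embedding. In either case $f\cdot 1_{C_\infty}=f$. With the paper's function you must still supply the $P_j$-type argument; with Aoki's class, nontriviality comes from Aoki plus that embedding. Either way, that nontriviality argument is exactly what is currently missing, and once you have it the virtual-minimum truncation is superfluous.
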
 

Note this shows $H^n(X(\aleph_0,\aleph_1,\ldots,\aleph_{n+1})^-,\underline{\bbZ_2}) \neq 0$ in ZFC.

\begin{proof}
  To apply Theorem \ref{thm:main3}, we let $B = \{z,z+1,\ldots,z+k\}$, so that
  $X(\vec\kappa^B) = \alpha(\aleph_0)^z$, the cartesian product of $z$ many copies of the one-point compactification of $\aleph_0$. We define $\phi : X(\vec\kappa^B) \to Z^k(\vec\kappa\res_B)$ by
  $$
    \phi(m_0,\ldots m_{z-1})(\beta_0,\ldots,\beta_k) = 
    \begin{cases}
      1, &\t{ if $\beta_0 < \min(m_0,\ldots,m_{z-1})$}\\
         &\t{ \quad and all other $\beta_i$ are even,} \\
      0, &\t{ otherwise.}
    \end{cases}
  $$
  (Here, by ``even'' we mean ``of the form $\lambda + 2k$ for $\lambda$ a limit ordinal, $k < \omega$''. This can be replaced with any sequence of unbounded, co-unbounded subsets of the $\kappa_i$.)
  Notice this is well-defined even if any subset of the $m_i$ are set to $\infty$; thus $\phi$ is continuous. Moreover, if not every $m_i$ is $\infty$, the resulting map is mod-finite constant in the $\beta_0$-direction, and is thus a coboundary. So, modulo Theorem \ref{thm:main3}, we need only show that the map
  \begin{align*}
    \phi(\infty,\ldots,\infty) &:\quad \left(\prod_{z \leq i \leq z+k} \kappa_i\right) \to \bbZ_2\\
    \langle \beta_0,\ldots,\beta_k \rangle &\mapsto
    \begin{cases}
      1, &\t{ if $\beta_0 < \aleph_0$} \\
         &\quad\t{ and all other $\beta_i$ are even,} \\
      0  &\t{ otherwise}
    \end{cases}
  \end{align*}
  is nontrivial as a top cocycle in $\vec \kappa$.

  To do this, we define properties $P_j$ for $j \leq k$ which may or may not hold of functions $f : \prod_{i \leq j} \kappa_{z+i} \to \bbZ_2$, inductively as follows:
  
  \begin{itemize}
    \item A function $f : \kappa_z \to \bbZ_2$ has property $P_0$ if and only if it's constant on a cofinite subset of $\kappa_z$.
    \item For $j > 0$, a function $f : \prod_{i \leq j} \kappa_{z+i} \to \bbZ_2$ has property $P_j$ if and only if there is $\gamma^* \in \kappa_{z+j}$ such that for any $\gamma_0,\gamma_1 \geq \gamma^*, f(-,\gamma_0) + f(-,\gamma_1)$ has property $P_{j-1}$.
  \end{itemize}
\pagebreak
  We first show that for $j \geq 1$, if $f : \prod_{i \leq j} \kappa_{z+i} \to \bbZ_2$ is a coboundary, then $f$ has property $P_j$:
  \begin{itemize}
    \item If $j = 1$, then $f : \kappa_z \times \kappa_{z+1} \to \bbZ_2$ is such that there are $g_z,g_{z+1}$ such that $g_z + g_{z+1} = f$ and $g_{z+i}$ is mod-finite constant in the $i$-th coordinate. Then for every $\beta \in \kappa_z,$ the set
    $$
      \{\gamma \in \kappa_{z+1} : g_{z+1}(\beta,\gamma) \neq g_{z+1}(\beta,\infty)\}
      \subset \kappa_{z+1}
    $$
    is finite; since $\kappa_z < \kappa_{z+1}$, we can find $\gamma^* \in \kappa_{z+1}$ above all these finite sets. Put $\gamma_0,\gamma_1 \geq \gamma^*$ and take $B \subset \kappa_z$ cofinite such that, for any $\beta \in B$ we have
    $$
      g_z(\beta,\gamma_0) = g_z(\infty,\gamma_0) \t{ and } g_z(\beta,\gamma_1) = \gamma_z(\infty,\gamma_1).
    $$
    Then for any $\beta \in B$ we have
    \begin{align*}
      &\phantom{=i}f(\beta,\gamma_0) + f(\beta,\gamma_1) \\
      &= g_{z}(\beta,\gamma_0) + g_{z+1}(\beta,\gamma_0) + g_z(\beta,\gamma_1) + g_{z+1}(\beta,\gamma_1) \\
      &= g_{z}(\infty,\gamma_0) + g_{z+1}(\beta,\infty) + g_z(\infty,\gamma_1) + g_{z+1}(\beta,\infty) \\
      &= g_z(\infty,\gamma_0) + g_z(\infty,\gamma_1)
    \end{align*}
    which is independent of $\beta$. So $f(-,\gamma_0) + f(-,\gamma_1)$ is constant on a cofinite set and thus has property $P_1$.
    \item If $j > 1$, suppose $f : \prod_{i \leq j} \kappa_{z+i} \to \bbZ_2$ is a coboundary, so $f = \sum_{i \leq j} g_{z+i}$ where $g_{z+i}$ is mod-finite constant in the $i$-th coordinate. In this case, for every $\vec \beta \in \prod_{i < j}$, the set
    $$
      \{\gamma \in \kappa_j : g_{z+j}(\vec\beta,\gamma) \neq g_{z+j}(\vec\beta,\infty)\}
    $$
    is again finite, so since $\kappa_{z+j}$ is strictly larger than each other $\kappa_{z+i}$ we can find $\gamma^* \in \kappa_{z+j}$ such that $\gamma \geq \gamma^*$ implies $g_{z+j}(-,\gamma) = g_{z+j}(-,\infty)$. Let $\gamma_0,\gamma_1 \geq \gamma^*$. Then we get
    \begin{align*}
      &\phantom{=i} f(-,\gamma_0) + f(-,\gamma_1) \\
      &= \left(\sum_{i < j} g_{z+i}(-,\gamma_0)\right) + g_{z+j}(-,\gamma_0)
       + \left(\sum_{i < j} g_{z+i}(-,\gamma_1)\right) + g_{z+j}(-,\gamma_1) \\
      &= \sum_{i < j}\left( g_{z+i}(-,\gamma_0) + g_{z+i}(-,\gamma_1)\right),
    \end{align*}
    which exhibits $f(0,\gamma_0) + f(-,\gamma_1)$ as a coboundary. By induction this means $f(0,\gamma_0) + f(-,\gamma_1)$ has property $P_{j-1}$ so that $f$ has property $P_j$.
  \end{itemize}
  It thus remains to show that our map $\phi(\infty,\ldots,\infty)$ does not have property $P_k$. We again do this by induction on $k$:
  \begin{itemize}
    \item If $k = 1$ then $\phi(\infty,\ldots,\infty)(\beta,\gamma) = 1$ if and only if $\beta_0 < \aleph_0$ and $\gamma$ is even. Fixing any $\gamma^*$, take $\gamma_0,\gamma_1 > \gamma^*$ even and odd, respectively. Then $\phi(\infty,\ldots,\infty)(\beta,\gamma_0) + \phi(\infty,\ldots,\infty)(\beta,\gamma_1) = 1$ if and only if $\beta < \aleph_0$; since $\kappa_z > \aleph_0$, this is not constant on a cofinite subset of $\kappa_z$.
    \pagebreak
    \item Now, suppose $\phi(\infty,\ldots,\infty)$ had property $P_j$, with witness $\gamma^* \in \kappa_{z+j}$. Again choose $\gamma_0,\gamma_1 \geq \gamma^*$ even and odd respectively, and notice that 
    $$
      \phi(\infty,\ldots,\infty)(\vec \beta,\gamma_0) + \phi(\infty,\ldots,\infty)(\vec \beta,\gamma_1) = \phi(\infty,\ldots,\infty)(\vec\beta).
    $$
    The left hand side has property $P_{j-1}$ by hypothesis, but the left side does not, by induction. Thus $\phi(\infty,\ldots,\infty)$ does not have property $P_j$.
  \end{itemize}
  We conclude by Theorem \ref{thm:main3} that $H^k(\xkappa^-,\underline{\bbZ_2}) \neq 0$.
\end{proof}

Proposition \ref{prop:main3-app} is typical of the intended application of Theorem \ref{thm:main3}: namely, to reduce an intermediate cohomology computation to a question in top cohomology, which is more easily tractable with elementary inductions like those in the proof.

\section{Further work and conclusion}
Chief among the outstanding questions is whether the cardinal arithmetic assumptions of section \ref{sec:main1} are necessary, and indeed whether any other independence phenomena arise:

\begin{question}
  Does ZFC decide the vanishing/nonvanishing of $H^k(\xkappa^-,\underline{\bbZ_2})$ for all $\vec\kappa$?
\end{question}

Additional questions about the $X(\vec\kappa)^-$ are important in applications as well. Aoki shows that the cup product structure on the cohomology ring of 
$$
  X(\aleph_0,\aleph_1)^- \times \cdots \times X(\aleph_{2n,2n+1})^-
$$
admits a nonzero product of size $n+1$, and asks if there is a smaller locally profinite set with this property. Further work is required to determine if the machinery introduced here can be applied to resolve this question.

\
Lastly, as we have not resolved all cohomology groups of the $\xkappa^-$, it is worth pointing out some small cases that remain unsolved:
\begin{question}
  Which of the following are consistent?
  \begin{itemize}
    \item $H^1(X(\aleph_1,\aleph_2,\aleph_2)^-,\underline{\bbZ_2}) = 0$? $H^1(X(\aleph_1,\aleph_2,\aleph_2)^-,\underline{\bbZ_2}) \neq 0$?
    \item $H^1(X(\aleph_2,\aleph_2,\aleph_2)^-,\underline{\bbZ_2}) = 0$? $H^1(X(\aleph_2,\aleph_2,\aleph_2)^-,\underline{\bbZ_2}) \neq 0$?
    \item $H^1(X(\aleph_1,\aleph_2,\aleph_3)^-,\underline{\bbZ_2}) = 0$? (consistently nonzero if $2^{\aleph_1} \geq \aleph_3$)
    \item $H^2(X(\aleph_0,\aleph_0,\aleph_0,\aleph_1)^-,\underline{\bbZ_2}) = 0$? $H^2(X(\aleph_0,\aleph_0,\aleph_0,\aleph_1)^-,\underline{\bbZ_2}) \neq 0$?
  \end{itemize}
\end{question}

\bibliography{locprof-main}
 
\end{document}